\numberwithin{equation}{section}
\theoremstyle{plain}
\newtheorem{thm}{\protect\theoremname}[section]
\theoremstyle{plain}
\newtheorem{lem}[thm]{\protect\lemmaname}
\theoremstyle{plain}
\newtheorem{prop}[thm]{\protect\propositionname}
\theoremstyle{remark}
\newtheorem{rem}[thm]{\protect\remarkname}
\theoremstyle{plain}
\newtheorem{cor}[thm]{\protect\corollaryname}
\providecommand{\corollaryname}{Corollary}
\providecommand{\lemmaname}{Lemma}
\providecommand{\propositionname}{Proposition}
\providecommand{\remarkname}{Remark}
\providecommand{\theoremname}{Theorem}
\global\long\def\R{\mathbb{R}}%
\global\long\def\Z{\mathbb{Z}}%
\global\long\def\N{\mathbb{N}}%
\global\long\def\T{\mathbb{T}}%
\providecommand{\corollaryname}{Corollary}
\providecommand{\lemmaname}{Lemma}
\providecommand{\propositionname}{Proposition}
\providecommand{\remarkname}{Remark}
\providecommand{\theoremname}{Theorem}
\providecommand{\corollaryname}{Corollary}
\providecommand{\lemmaname}{Lemma}
\providecommand{\propositionname}{Proposition}
\providecommand{\remarkname}{Remark}
\providecommand{\theoremname}{Theorem}
\begin{document}
\title[Local well-posedness for generalized Benjamin-Ono]{Low regularity well-posedness for generalized Benjamin-Ono equations
on the circle}
\author{Kihyun Kim}
\email{khyun1215@kaist.ac.kr}
\address{Department of Mathematical Sciences, Korea Advanced Institute of Science
and Technology, 291 Daehak-ro, Yuseong-gu, Daejeon 34141, Korea}
\author{Robert Schippa}
\email{robert.schippa@kit.edu}
\address{Fakult\"at f\"ur Mathematik, Karlsruher Institut f\"ur Technologie,
Englerstrasse 2, 76131 Karlsruhe, Germany}
\keywords{dispersive equations, quasilinear equations, generalized Benjamin-Ono
equation, short-time Fourier restriction}
\subjclass[2020]{35Q35, 35Q55}

\begin{abstract}
New low regularity well-posedness results for the generalized Ben\-ja\-min-\-Ono
equations with quartic or higher nonlinearity and periodic boundary
conditions are shown. We use the short-time Fourier transform restriction
method and modified energies to overcome the derivative loss. Previously,
Molinet--Ribaud established local well-posedness in $H^{1}(\T,\R)$
via gauge transforms. We show local existence and a priori estimates
in $H^{s}(\T,\R)$, $s>1/2$, and local well-posedness in $H^{s}(\T,\R)$,
$s\geq3/4$ without using gauge transforms. In case of quartic nonlinearity we prove global existence of solutions conditional upon small initial data. 
\end{abstract}

\maketitle

\section{Introduction}

In this article we improve the well-posedness theory for the \emph{$k$-generalized
periodic Benjamin-Ono equation} in $L^{2}$-based Sobolev spaces 
\begin{equation}
\left\{ \begin{array}{cl}
\partial_{t}u+\mathcal{H}\partial_{xx}u & =\mp\partial_{x}(u^{k})\quad(t,x)\in\mathbb{R}\times\mathbb{T},\\
u(0) & =u_{0}\in H^{s}(\mathbb{T},\R),
\end{array}\right.\label{eq:GeneralizedBenjaminOnoEquation}
\end{equation}
where $k\geq4$ and $\mathbb{T}=\R/(2\pi\mathbb{Z})$. Throughout
this article, $\mathcal{H}$ denotes the \emph{Hilbert transform},
i.e., 
\[
\mathcal{H}:L^{2}(\mathbb{T})\rightarrow L^{2}(\mathbb{T}),\quad(\mathcal{H}f)^{\wedge}(\xi)=-i\text{sgn}(\xi)\hat{f}(\xi).
\]

Note that real-valued initial data give rise to real-valued solutions.
We shall implicitly consider real-valued initial data in the following,
unless stated otherwise.

By \emph{local well-posedness} we refer to the following: the data-to-solution
mapping $S_{T}^{\infty}:H^{\infty}(\mathbb{T})\rightarrow C([0,T],H^{\infty}(\T))$
assigning smooth initial data to smooth solutions admits a continuous
extension $S_{T}^{s}:H^{s}\rightarrow C_{T}H^{s}$ with $T=T(\Vert u_{0}\Vert_{H^{s}})$,
which can be chosen continuously on $\Vert u_{0}\Vert_{H^{s}}$. Existence
and continuity of $S_{T}^{s}:H^{s}\rightarrow C_{T}H^{s}$ for $s>3/2$
follows from the classical energy method (cf. \cite{BonaSmith1975,AbdelouhabBonaFellandSaut1989}). Solutions to \eqref{eq:GeneralizedBenjaminOnoEquation}
on the real line admit the scaling symmetry 
\[
u(t,x)\rightarrow\lambda^{-\frac{1}{k-1}}u(\lambda^{-2}t,\lambda^{-1}x).
\]
This leads to the scaling critical space $\dot{H}^{s_{c}}(\R)$, $s_{c}(k)=\frac{1}{2}-\frac{1}{k-1}$,
which is the largest $L^{2}$-Sobolev space for which local well-posedness
can be expected.

Conserved quantities of solutions to \eqref{eq:GeneralizedBenjaminOnoEquation}
are the \emph{mass}, i.e., the $L^{2}$-norm, 
\[
M(u_{0})=\int_{\T}u_{0}^{2}dx,
\]
and the energy, related with the $H^{1/2}$-norm, 
\[
E(u_{0})=\int_{\T}\frac{u_{0}\mathcal{H}\partial_{x}u_{0}}{2}\pm\frac{u_{0}^{k+1}}{k+1}dx.
\]
The $\pm$ signs correspond to \eqref{eq:GeneralizedBenjaminOnoEquation}.

When $k$ is even, there is no difference between the dynamics of
\eqref{eq:GeneralizedBenjaminOnoEquation} with $\pm$ signs in front
of the nonlinearity, because if $u$ is a solution to \eqref{eq:GeneralizedBenjaminOnoEquation}
with $+$ sign, then $-u$ is a solution to \eqref{eq:GeneralizedBenjaminOnoEquation}
with $-$ sign, and vice versa. However, when $k$ is odd, there is
a big difference between the dynamics. \eqref{eq:GeneralizedBenjaminOnoEquation}
with a minus sign is referred to as \emph{defocusing} equation and
with a plus sign as \emph{focusing} equation. The energy is positive
definite, and a local well-posedness result in $H^{1/2}$ can be extended
globally in the defocusing case. On the contrary, in the focusing
case, Martel--Pilod \cite{MartelPilod2017} recently proved the existence
of minimal blow-up solutions in the energy space for $k=3$ on the
real line (see also \cite{KenigMartelRobbiano2011}). This indicates
blow-up in the periodic case for focusing nonlinearities.

Equations \eqref{eq:GeneralizedBenjaminOnoEquation} have mostly been
studied on the real line, where the dispersive effects are stronger
and the solutions are easier to handle. We digress for a moment to
review the results on the real line to highlight key-points of the
local well-posedness on the real line. Some transpire to the periodic
case. We shall refer to the most recent results and the references
therein.

The \emph{Benjamin-Ono equation} $(k=2)$ is completely integrable
and has been studied extensively. We first note that the High$\times$Low$\rightarrow$High-interaction
\begin{equation}
\partial_{x}(P_{N}uP_{K}u),\label{eq:HighLowHighInteraction}
\end{equation}
with $P_{L}$, $L\in2^{\mathbb{N}_{0}}$, localizing to frequencies
of size about $L$, leads to derivative loss. This makes it impossible
to solve the Benjamin-Ono equation via Picard iteration (cf. \cite{MolinetSautTzvetkov2001,KochTzvetkov2005}).
Via gauge transform (introduced by Tao in \cite{Tao2004}), Ionescu--Kenig
proved global well-posedness in $L^{2}(\R)$ in \cite{IonescuKenig2007}
making use of Fourier restriction spaces; see also \cite{MolinetPilod2012}.
Ifrim--Tataru significantly simplified the proof by normal form transformations
and relying only on Strichartz spaces in \cite{IfrimTataru2019}.
Recently, Talbut proved a priori estimates up to the scaling critical
regularity in \cite{Talbut2018} via complete integrability as well
on the real line as on the circle.

For the \emph{modified Benjamin-Ono equation} ($k=3$), Kenig--Takaoka \cite{KenigTakaoka2006} showed
global well-posedness for real-valued initial data in the energy
space in case of defocusing nonlinearity. Guo extended this to complex-initial data in \cite{Guo2011MBO}. He used smoothing effects on the real
line instead of the gauge transform to overcome the derivative loss.
Moreover, he proved a priori estimates up to $s>1/4$ using short-time
Fourier transform restriction. In this work becomes clear that for
$k\geq3$ the High$\times$High$\times$High$\to$High-interaction
is also problematic below $H^{1/2}$: 
\[
P_{N}\partial_{x}(P_{N_{1}}uP_{N_{2}}uP_{N_{3}}u),
\]
where $N\sim N_{1}\sim N_{2}\sim N_{3}$. In this case the resonance
(see Section \ref{sec:EnergyEstimateSolutions}) can become arbitrarily
small. Furthermore, in \cite{Guo2011MBO} was shown how smoothing
effects on the real line can replace the gauge transform for $k=3$.
For $k=4$, Vento \cite{Vento2010} proved local well-posedness in
$H^{1/3}$, which turned out to be the limit of fixed point arguments,
and reached the scaling critical regularity for $k\geq5$; see also
\cite{MolinetRibaud2004SmallData,MolinetRibaud2004LargeData,BurqPlanchon2006}.

There are fewer results for \eqref{eq:GeneralizedBenjaminOnoEquation}
with periodic boundary conditions. Molinet \cite{Molinet2007} adapted
the gauge transform to the periodic Benjamin-Ono equation to prove
global well-posedness in $L^{2}(\T)$. Herr \cite{Herr2008} showed that
the Benjamin-Ono equation with periodic boundary conditions cannot be solved 
via Picard iteration directly. By complete integrability,
G\'erard--Kappeler--Topalov \cite{GerardKappelerTopalov2020} proved
global well-posedness up to the scaling critical regularity $s_{c}=-1/2$;
see also \cite{GerardKappeler2019}. For $k=3$, Guo--Lin--Molinet
\cite{GuoLinMolinet2014} showed global well-posedness in the energy
space by adapting the gauge transform and using Fourier restriction
spaces. For $k=3$, the second author proved existence and a priori
estimates for $s>1/4$ using short-time Fourier restriction, but not
relying on gauge transforms, in \cite{Schippa2017MBO}. For $k\geq4$,
Molinet--Ribaud \cite{MolinetRibaud2009} proved local well-posedness
in $H^{1}(\T)$ via gauge transforms and Strichartz estimates.

At last, we address ill-posedness issues. Firstly, we remark that Christ's argument \cite{Christ2004}, originally applied to the quadratic
derivative nonlinear Schr\"odinger equation
\begin{equation*}
i \partial_t u + \partial_{xx} u = i u \partial_x u, \quad (t,x) \in \R \times \T,
\end{equation*}
shows norm inflation for complex-valued initial data at any Sobolev regularity.\\
Secondly, the High$\times$Low-interaction \eqref{eq:HighLowHighInteraction} leads to the failure of the multilinear $X^{s,b}$-estimate\footnote{We set $b=1/2$ for simplicity. Note that in this limiting case one actually has to consider a smaller function space.}
\begin{equation*}
\Vert \partial_x(u_1 \ldots u_k) \Vert_{X^{s,-1/2}} \lesssim \prod_{i=1}^k \Vert u_i \Vert_{X^{s,1/2}},
\end{equation*}
also after removing trivial resonances.\\
This contrasts with the generalized KdV-equations on the circle
\begin{equation*}
\partial_t u + \partial_x^3 u = \partial_x (u^k),
\end{equation*}
where Colliander \emph{et al.} \cite{CollianderKeelStaffilaniTakaokaTao2004} showed the crucial multilinear $X^{s,b}$-estimate for $s=1/2$ after renormalizing the nonlinearity.

We now state our main results. Our first result shows the local existence
and a priori estimates for $s>1/2$. 
\begin{thm}[Local existence and a priori estimates]
\label{thm:APrioriGBOI} Let $k\geq4$ and $s>1/2$. Then, for any
$u_{0}\in H^{s}(\T)$ with $\|u_{0}\|_{H^{s}}\leq R$, there is $T=T(R)>0$
such that a solution $u\in C_{T}H^{s}$ to \eqref{eq:GeneralizedBenjaminOnoEquation}
exists in the sense of distributions and the a priori estimate 
\begin{equation}
\sup_{t\in[0,T]}\Vert u(t)\Vert_{H^{s}}\lesssim_{R}\Vert u_{0}\Vert_{H^{s}}\label{eq:APrioriEstimateGBO}
\end{equation}
holds true.
\end{thm}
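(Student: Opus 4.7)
The plan is to implement the short-time Fourier restriction method in the spirit of Ionescu--Kenig--Tataru, combined with modified energies as in \cite{Schippa2017MBO} for $k=3$. Since only existence and an a priori bound are claimed, it suffices to work with smooth solutions -- which exist by the classical energy method cited above -- prove that their $H^s$ norm is controlled by $\|u_0\|_{H^s}$ on a time interval depending only on $R=\|u_0\|_{H^s}$, and then approximate an arbitrary $H^s$ datum by smooth data, extract a weak-$*$ limit in $L^\infty_T H^s$, and check that the limit solves \eqref{eq:GeneralizedBenjaminOnoEquation} in the sense of distributions.

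The core is a tri-estimate bootstrap. I introduce a short-time Bourgain-type solution space $F^s_T$, in which each dyadic piece $P_N u$ is measured in an $X^{s,1/2}$-type norm on time intervals of length $\sim N^{-1}$; its dual nonlinearity space $N^s_T$; and the energy space $E^s_T=L^\infty_T H^s$. Standard short-time arguments yield the linear estimate
\begin{equation*}
\|u\|_{F^s_T}\lesssim \|u\|_{E^s_T} + \|\partial_t u + \mathcal H\partial_{xx}u\|_{N^s_T}.
\end{equation*}
The $k$-linear bound $\|\partial_x(u^k)\|_{N^s_T}\lesssim \|u\|_{F^s_T}^k$ then follows by a Littlewood--Paley decomposition and dyadic summation: all frequency configurations except the High$\times$Low$^{k-1}\to$High interaction \eqref{eq:HighLowHighInteraction} are handled directly using $s>1/2$ together with short-time $L^4_{t,x}$-type Strichartz estimates, while the problematic resonant interaction is moved to the energy side.

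The energy estimate is the heart of the argument. Differentiating $\|P_N u\|_{L^2}^2$ along a smooth solution produces, after dyadic resummation, a resonant contribution that realises exactly the derivative loss from \eqref{eq:HighLowHighInteraction}. Symmetrising over permutations of the $k$ factors and exploiting the structure of the Benjamin--Ono dispersion relation, I construct a $k$-linear correction $R^s(u)$, of size $O(\|u\|_{H^s}^{k+1})$, so that the modified energy $\widetilde E^s(u)=\|u\|_{H^s}^2 + R^s(u)$ annihilates the resonant term along the flow. The remaining non-resonant contributions are controlled by the $F^s_T$ multilinear machinery, yielding
\begin{equation*}
\widetilde E^s(u)(t)\lesssim \widetilde E^s(u_0) + T^{\theta}\|u\|_{F^s_T}^{k+1}
\end{equation*}
for some $\theta>0$ (the small-$T$ gain coming, as is usual on $\T$, from truncating to high frequencies and treating the low frequencies by the classical energy method). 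Since $\widetilde E^s\sim \|\cdot\|_{H^s}^2$ for bounded data, combining the three estimates with the continuity of $T\mapsto \|u\|_{F^s_T}$ on smooth solutions closes the bootstrap on a time interval $T=T(R)$.

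The main obstacle is constructing and controlling the modified energy: the correction $R^s(u)$ must annihilate the top-order High$\times$Low$^{k-1}\to$High contribution at every frequency $N$ and, especially for $k=4$, must also accommodate the near-resonant high-frequency interactions flagged in the introduction, while remaining dominated by the quadratic main energy. Carrying out this symbol analysis without recourse to a gauge transform is precisely what fixes the threshold $s>1/2$; once it is in place, the approximation argument delivers the theorem.
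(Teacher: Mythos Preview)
Your overall short-time Fourier restriction framework matches the paper's, but there are two substantive departures worth flagging.

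\medskip
\textbf{The modified-energy picture is partly inverted.} The boundary correction $R^s(u)$ you describe arises from integrating by parts in time, i.e., from dividing by the resonance function $\Omega_k$; it therefore handles \emph{non}resonant interactions (in particular the High$\times$Low$^{k-1}\to$High configuration \eqref{eq:HighLowHighInteraction}, which has $|\Omega_k|\sim N_1N_3\neq 0$). For $k\ge 4$ there are genuinely resonant High$\times$High$\times$High$\times$High interactions where $\Omega_k$ can vanish while the symmetrized multiplier $A_k$ does not, and these \emph{cannot} be absorbed into any boundary term. The paper deals with this via a symbol decomposition $A_k=b\,\Omega_k+c$: the $b\,\Omega_k$ piece is your modified energy, but the residual $c$ (supported where $N_3\sim N_4$) must be estimated directly by multilinear Strichartz, and this is precisely where the restriction $s>1/2$ enters. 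Your sentence ``the modified energy \ldots annihilates the resonant term'' has this backwards. Relatedly, after integrating by parts in time the $\partial_s$ hitting $u$ returns the nonlinearity, producing a $(2k)$-linear spacetime error; this is the source of the additional $\|u\|_{F^s}^{2k}$ term in the paper's energy estimate, which your proposal omits.

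\medskip
\textbf{Large data: $T^\theta$ versus rescaling.} The paper does not extract a factor $T^\theta$ at all. Instead it closes the bootstrap for \emph{small} data on $[0,1]$, then treats large data by rescaling the circle to $\lambda\T$ with $\lambda$ large so that the rescaled datum is small in a subcritical $H^{\overline s}_\lambda$-norm, runs the small-data argument uniformly in $\lambda$, and scales back to obtain $T=T(R)\sim\lambda^{-2}$. The paper explicitly remarks that trading modulation regularity for powers of $T$ is problematic here because the $U^2/V^2$-based estimates already use the full modulation range. Your frequency-truncation route (classical energy for low frequencies, $N^{-\varepsilon}$ decay for high) is a different mechanism that could in principle work, but you would need to make that $N^{-\varepsilon}$ gain in the energy estimate explicit, and it is not the route the paper takes.
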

The result can be globalized in case of quartic nonlinearity.
\begin{thm}[Global existence for quartic nonlinearity]
\label{thm:APrioriGBOII}
In the quartic case $k=4$, the statement of Theorem \ref{thm:APrioriGBOI} holds under the assumption
$\|u_{0}\|_{H^{1/2}}\leq R$ instead of $\|u_{0}\|_{H^{s}}\leq R$.
If in addition $R$ is sufficiently small, then we can find a global
solution $u$.
\end{thm}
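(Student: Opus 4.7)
The plan is to refine the local a priori estimate of Theorem \ref{thm:APrioriGBOI} so that, for $k=4$, the time of existence depends only on the $H^{1/2}$ norm of the data, and then propagate this norm globally using conservation of mass $M(u)=\|u\|_{L^{2}}^{2}$ and energy $E(u)=\frac{1}{2}\int_{\T}u\mathcal{H}\partial_{x}u\,dx\pm\frac{1}{5}\int_{\T}u^{5}\,dx$. For the refined local step, I would revisit the multilinear and modified-energy estimates in the short-time Fourier restriction framework and show that, when $k=4$, both the existence time $T$ and the constant in \eqref{eq:APrioriEstimateGBO} depend only on $R_{0}:=\|u_{0}\|_{H^{1/2}}$. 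The heuristic is that in each frequency-localized estimate of $\partial_{x}(u_{1}u_{2}u_{3}u_{4})$ a single factor carries the Sobolev weight $N^{2s}$ at the top dyadic scale, while the remaining three factors only need to be controlled at regularity $1/2$; the 1D Gagliardo--Nirenberg embedding $\|u\|_{L^{6}}\lesssim\|u\|_{\dot{H}^{1/2}}^{2/3}\|u\|_{L^{2}}^{1/3}$ then supplies exactly the matching exponents to absorb the derivative loss. This is the quartic analogue of the strategy carried out for $k=3$ in \cite{Schippa2017MBO}.

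To propagate the $H^{1/2}$ norm I would use the conservation laws directly. In the defocusing case $E$ is positive definite, giving $\|u(t)\|_{H^{1/2}}^{2}\lesssim M(u_{0})+E(u_{0})$ uniformly in $t$. In the focusing case Gagliardo--Nirenberg yields $\bigl|\int_{\T}u^{5}\,dx\bigr|\lesssim\|u\|_{\dot{H}^{1/2}}^{3}\|u\|_{L^{2}}^{2}$; writing $A(t)=\|u(t)\|_{\dot{H}^{1/2}}^{2}$ and using mass conservation gives
\[
\tfrac{1}{2}A(t)\le E(u_{0})+CA(t)^{3/2}M(u_{0}),
\]
and a standard bootstrap argument, valid when $R$ is so small that $\sqrt{|E(u_{0})|}\,M(u_{0})\ll 1$, forces $A(t)\lesssim|E(u_{0})|$ throughout the lifespan. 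With a uniform $H^{1/2}$ bound in hand and a local existence time depending only on $R_{0}$, iterating the refined estimate on successive intervals of length $T(R_{0})$ extends the solution globally and preserves \eqref{eq:APrioriEstimateGBO} on any bounded time interval.

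The main obstacle is the refined local step: one must re-examine the short-time multilinear estimates carefully enough to verify that, for the quartic nonlinearity, the three subleading factors can be controlled using only $H^{1/2}$-type norms. This delicate balance is specific to $k=4$ and is precisely the reason the global statement is not extended to higher $k$. A secondary technical point is to quantify how small $R$ must be to close the focusing bootstrap and to confirm that the constants produced by the refined local theory do not destroy this smallness under iteration.
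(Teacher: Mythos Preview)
Your two-step outline (refined local estimate with $T=T(\|u_0\|_{H^{1/2}})$, then iterate via mass/energy conservation) matches the paper's strategy, and your global step is essentially the paper's: one simply uses $\bigl|\int_{\T}u^{5}\bigr|\lesssim\|u\|_{H^{1/2}}^{5}\ll\|u\|_{H^{1/2}}^{2}$ for small $\|u\|_{H^{1/2}}$ to close a bootstrap on $\|u(t)\|_{H^{1/2}}$. (Note that for $k=4$ even the $\pm$ signs give equivalent dynamics, so your defocusing/focusing split is moot; smallness is needed in either case.)

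There are two substantive points where your local step diverges from the paper. First, you do not address how to pass from small to large $H^{1/2}$ data; the paper does this by rescaling $u_{0,\lambda}(x)=\lambda^{-1/(k-1)}u_{0}(\lambda^{-1}x)$ onto $\lambda\T$ and working in the modified space $H_{\lambda}^{\overline{s}}$ with $\overline{s}=(s_{1},1/2)$, $s_{c}<s_{1}<1/2$, so that the rescaled datum is small. The short-time estimates on $\lambda\T$ are uniform in $\lambda$, and scaling back yields $T\sim\lambda^{-2}$ depending only on $\|u_{0}\|_{H^{1/2}}$. Without this step you only obtain the first assertion of the theorem for small $R$.

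Second, your heuristic for why $s_{2}=1/2$ is reachable when $k=4$ (three subleading factors controlled in $H^{1/2}$ via $L^{6}$ Gagliardo--Nirenberg) is not the paper's mechanism. The short-time nonlinear estimate (Proposition~\ref{prop:ShorttimeNonlinearEstimate}) already holds for $s_{2}>s_{k}$ with $s_{k}<1/2$, so the obstruction lies entirely in the energy estimate (Proposition~\ref{prop:EnergyEstimateSolutions}). There the multiplier $A_{k}$ is decomposed as $b\Omega_{k}+c$ (Lemma~\ref{lem:decomposition-Ak}); the $b\Omega_{k}$ contribution is harmless at $s_{2}=1/2$ for all $k$ after integration by parts in time (Lemma~\ref{lem:Contribution-bOmega}), and the restriction comes from the $c$ contribution (Lemma~\ref{lem:Contribution-c}). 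For $k=4$ the bound $|c|\lesssim a(N_{1})N_{5}$ on the support $N_{3}\sim N_{4}$ produces a $5$-linear expression that is just barely summable at $s_{2}=1/2$ using four $L_{t}^{8}L_{x}^{4}$ Strichartz estimates; for $k\geq5$ only a Besov endpoint survives. Your $L^{6}$ Gagliardo--Nirenberg heuristic does not see this multiplier structure, so it would not by itself close the energy estimate at $s_{2}=1/2$.
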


Our second result shows local well-posedness for $s\geq3/4$. Since
the difference equation satisfies less symmetries than the original
equation, we can only prove the following weaker result for continuous
dependence.
\begin{thm}[Local well-posedness]
\label{thm:LWPGBO} Let $k\geq4$ and $s\geq3/4$. Then, we find
\eqref{eq:GeneralizedBenjaminOnoEquation} to be locally well-posed. 
\end{thm}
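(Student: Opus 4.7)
The plan is to upgrade the a priori estimates of Theorem~\ref{thm:APrioriGBOI} to a local well-posedness statement via a Bona--Smith type approximation argument, whose crucial new ingredient is an energy estimate for the difference of two solutions. Since existence of distributional solutions satisfying~\eqref{eq:APrioriEstimateGBO} is already in hand for every $s>1/2$, what remains is uniqueness, persistence of regularity, and continuous dependence on the datum.

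First, given $u_{0}\in H^{s}(\T)$ with $s\geq 3/4$, I would regularize via $u_{0}^{(n)}=P_{\leq 2^{n}}u_{0}\in H^{\infty}(\T)$, produce smooth solutions $u^{(n)}$ by classical theory, and invoke Theorem~\ref{thm:APrioriGBOI} to secure a common lifespan $T=T(\|u_{0}\|_{H^{s}})$ with uniform bound $\sup_{n}\|u^{(n)}\|_{C_{T}H^{s}}\lesssim\|u_{0}\|_{H^{s}}$. The real task is to show that $\{u^{(n)}\}$ is Cauchy in $C_{T}H^{\sigma}$ for some $\sigma<s$; combined with the uniform $H^{s}$-bound this yields, by weak lower semicontinuity and interpolation, a limit $u\in L^{\infty}_{T}H^{s}$ which is strongly attained in every $H^{s'}$, $s'<s$.

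The heart of the matter is the difference equation for $v=u^{(n)}-u^{(m)}$:
\begin{equation*}
\partial_{t}v+\mathcal{H}\partial_{xx}v=\mp\partial_{x}(Gv),\qquad G=\sum_{j=0}^{k-1}(u^{(n)})^{k-1-j}(u^{(m)})^{j},
\end{equation*}
a linear variable coefficient problem whose coefficient $G$ is controlled in $H^{s}$ via the uniform bound. I would estimate $v$ at regularity $\sigma$ in the short-time Fourier restriction spaces, together with a modified energy functional of the type used for the a priori estimate, but now depending on both $u^{(n)}$ and $u^{(m)}$. The difficulty is that $v$ itself does not solve \eqref{eq:GeneralizedBenjaminOnoEquation}: the algebraic cancellations at the High$\times\cdots\times$High$\to$High resonance and the $\pm$-structure used to close the energy estimate for a true solution are no longer directly available (this is the loss of symmetries remarked upon before the statement), and the High$\times$Low$\to$High interaction in $\partial_{x}(Gv)$ with $v$ in the low-frequency slot still loses one derivative. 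Recouping this loss via localization to time scales of size $N^{-1}$ at frequency $N$ and via the modified-energy correction is exactly what forces the threshold $s\geq 3/4$.

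I expect this multilinear estimate for the linearized problem, carried out without the symmetric structure of \eqref{eq:GeneralizedBenjaminOnoEquation}, to be the main obstacle. Once it closes, Cauchy-ness of $\{u^{(n)}\}$ in $C_{T}H^{\sigma}$ follows, the limit $u$ lies in $C_{T}H^{s}$, and the same difference estimate applied to two arbitrary solutions yields uniqueness as well as a continuous data-to-solution map in the weakened sense indicated before the statement (continuity as a map $H^{s}\to C_{T}H^{s'}$ for $s'<s$, with boundedness in $H^{s}$ preserved).
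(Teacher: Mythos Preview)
Your outline captures the Lipschitz estimate for differences at a lower regularity, which is indeed half of the argument. But there is a genuine gap: what you describe yields only the weaker conclusion that $S_T$ is continuous as a map $H^{s}\to C_{T}H^{s'}$ for $s'<s$ with uniform $L^{\infty}_{T}H^{s}$-bounds, whereas the paper's notion of local well-posedness (stated in the introduction) requires a continuous extension $S_{T}^{s}:H^{s}\to C_{T}H^{s}$, i.e.\ continuity in the full $C_{T}H^{s}$ topology. Weak lower semicontinuity plus interpolation does not upgrade $L^{\infty}_{T}H^{s}\cap C_{T}H^{s'}$ to $C_{T}H^{s}$, and you have not shown that $\{u^{(n)}\}$ is Cauchy in $C_{T}H^{s}$. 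The remark ``weaker result for continuous dependence'' before the theorem refers to the regularity threshold $s\geq 3/4$ (versus $s>1/2$ for a priori bounds), not to a weakening of the topology.

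To close this gap the paper proves, in addition to the $H^{s-1}$-Lipschitz estimate you sketch, a second energy estimate for the difference $v$ at regularity $s$ itself (Proposition~\ref{prop:EnergyEstimatesDifferences}, display~\eqref{eq:HsContinuityBonaSmithRegularities}). The crucial structural feature is that the right-hand side contains terms of the form $\Vert v\Vert_{F^{s-1}}\,\Vert u_{2}\Vert_{F^{s+1}}$ rather than $\Vert v\Vert_{F^{s}}\,\Vert u_{2}\Vert_{F^{s}}$; this is obtained by writing $\partial_{x}(u_{1}^{k}-u_{2}^{k})=(\partial_{x}v)u^{k-1}+v\,w^{k-1}$ so that no derivative falls on $u_{1}$. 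In the Bona--Smith decomposition through the truncated data $P_{\leq N}u_{0}$, the factor $\Vert v\Vert_{F^{s-1}}\lesssim N^{-1}\Vert u_{0,>N}\Vert_{H^{s}}$ (from the low-regularity Lipschitz bound) exactly cancels the loss $\Vert S_{T}(P_{\leq N}u_{0})\Vert_{F^{s+1}}\lesssim N$ (from persistence of regularity), and one concludes $\Vert S_{T}(u_{0})-S_{T}(P_{\leq N}u_{0})\Vert_{C_{T}H^{s}}\lesssim \Vert P_{>N}u_{0}\Vert_{H^{s}}\to 0$. This is the missing ingredient; without it your argument stalls at $C_{T}H^{s'}$. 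A secondary omission is the rescaling to $\lambda\T$ needed to reduce large data to small data, since the bootstrap for the difference only closes under a smallness assumption on $\Vert u_{i}\Vert_{F^{s}}$.
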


\ 

\emph{Comments on Theorems \ref{thm:APrioriGBOI}, \ref{thm:APrioriGBOII}, and \ref{thm:LWPGBO}.}

\emph{1. The restriction $s>1/2$ in Theorem \ref{thm:APrioriGBOI}.
}In \cite[Theorem 1.1]{Schippa2017MBO}, the second author proved
the analog of Theorem \ref{thm:APrioriGBOI} in case $k=3$ with improved
range $s>1/4$. This improvement relies on the resonance relation.
In $k=3$, the zero set of the resonance function is nontrivial, but
the (symmetrized) multiplier used in the energy estimates simultaneously
vanishes. However, this does not necessarily hold for $k\geq4$ and
our restriction $s>1/2$ comes from these resonant interactions. In
the quartic case, we can show a priori estimates in the short-time
function space $F^{1/2}$. In the quintic and higher cases, our argument
merely gives a priori estimates in the Besov refinements $F_{1}^{1/2}$.
As these miss the energy space, we cannot extend them globally. See
Section \ref{sec:EnergyEstimateSolutions} for more details.

\emph{2. The restriction $s\geq3/4$ of Theorem \ref{thm:LWPGBO}.}
To prove local well-posedness, we need to consider the difference
equation. When deriving energy estimates, the lack of symmetry does
not allow for the same favorable cancellations, as for solutions.
The restriction $s\geq3/4$ again comes from the resonant interactions.
More precisely, it comes from High$\times$High$\times$High$\times$High
interactions. See Section \ref{subsec:diff-est-case3} for details.

\emph{3. Gauge transforms.} Our method does not make use of gauge
transforms in contrast to the works \cite{Vento2010,MolinetRibaud2009}.
Here we mean by gauge transforms the usual ones used in the previous
literatures.\footnote{Any transform of the kind $u\mapsto e^{i\chi}u$ is a gauge transforms.
For the current discussion we focus on the gauge transform introduced
by Tao \cite{Tao2004} and variants thereof.}

The main reason is that the gauge transforms in our case do not behave
as well as on the real line or for $k\in\{2,3\}$. By a gauge transform,
it is possible to delete Low$\times$High interactions of the nonlinearity.
However, we have to deal with error terms generated by the gauge transforms
(e.g. when $\partial_{t}$ falls onto the gauge transforms). On the
real line, these can be estimated\footnote{Or, one can use a fixed-time gauge transform as in Vento \cite{Vento2010}.}
using better linear estimates than on the torus. When $k\in\{2,3\}$,
these errors have better structure than those of $k\geq4$; compare
\cite{GuoLinMolinet2014} of $k=3$ and \cite{MolinetRibaud2009}
of $k\geq4$. When $k\geq4$, $\partial_{t}$ acting on the gauge
transforms leads to problematic High$\times$High$\to$Low interactions,
so we choose to avoid using gauge transforms. Hence, we have to deal
with Low$\times$High interactions in the original nonlinearity, and
we choose to work with short-time Fourier restriction spaces to recover
the derivative loss (cf. \eqref{eq:HighLowHighInteraction}).

Avoiding the use of gauge transforms, our method can be adopted to
other models where the gauge transforms become very involved, if available
at all. Examples include the dispersion-generalized models (cf. \cite{Schippa2020DispersionGeneralizedBenjaminOno}). Moreover, for quadratic nonlinearities, an improvement of the energy method was
proposed by Molinet--Vento \cite{MolinetVento2015}. This makes use
of a precise comprehension of the resonance function and avoids gauge transforms, too.

\emph{4. Extension of the methods for $k\in\{2,3\}$.} We remark that
our method can be easily modified to yield the same results (i.e.,
$s>\frac{1}{4}$ in Theorem \ref{thm:APrioriGBOI}, and $s>1/2$ for
$k=2$ and $s\geq\frac{3}{4}$ for $k=3$ in Theorem \ref{thm:LWPGBO})
for the cases $k\in\{2,3\}$. However, there are stronger results
for $k\in\{2,3\}$ as mentioned above.

\ 

To prove Theorems \ref{thm:APrioriGBOI}-\ref{thm:LWPGBO}, we
use short-time Fourier restriction spaces as in \cite{Schippa2017MBO}.
We extend the approach of \cite{Schippa2017MBO} to $k\geq4$ in the
present paper. In the following we elaborate on short--time Fourier
restriction and the proof of the theorems. Since the body of literature
on short-time Fourier restriction is already huge, we do not aim for
an exhaustive review of references. We also refer to the references
within the discussed literature and the PhD thesis of the second author
\cite{Schippa2019PhDThesis}.

The first key ingredient of our method is the use of short-time Fourier
restriction spaces. This relies on the observation that the frequency
dependent time localization allows to prove low regularity results
for quasilinear dispersive equations. By \emph{quasilinear} we mean
that the equations cannot be solved by fixed point argument in $L^{2}$-based
Sobolev spaces. In Euclidean space, early works on short-time Fourier
restrictions are due to Koch--Tataru \cite{KochTataru2007}, Christ
\emph{et al.} \cite{ChristCollianderTao2008}, and Ionescu \emph{et
al.} \cite{IonescuKenigTataru2008}. Guo \emph{et al.} observed in
\cite{GuoPengWangWang2011} that the frequency dependent time localization
$T=T(N)=N^{-1}$ allows to overcome the derivative loss for High $\times$
Low $\rightarrow$ High--interaction \eqref{eq:HighLowHighInteraction}
on the real line. This showed how to avoid the gauge transform and
proved that inviscid limits recover solutions to the Benjamin-Ono
equation.

The second author observed \cite{Schippa2017MBO,Schippa2019PhDThesis}
that this extends to periodic solutions. Although dispersive effects
on tori are weaker, for time intervals of length $T=T(N)=N^{-1}$
Schr\"odinger wave packets cannot distinguish between Euclidean
space and compact manifolds. Hence, Strichartz estimates on frequency
dependent time intervals remain valid on compact manifolds. This was
observed for linear estimates by Staffilani--Tataru \cite{StaffilaniTataru2002}
and Burq \emph{et al.} \cite{BurqGerardTzvetkov2002} and for bilinear
estimates by Moyua--Vega \cite{MoyuaVega2008} and Hani \cite{Hani2012}.

The second key ingredient is
to use cancellation effects, which allows to control low Sobolev norms,
in a similar spirit with the $I$\emph{-method} (cf. \cite{CollianderKeelStaffilaniTakaokaTao2002}).
For differences of solutions, due to less symmetries, this is known
as \emph{normal form transformations} or \emph{modified energies}
as used by Kwon \cite{Kwon2008} and Kwak \cite{Kwak2016,Kwak2018}.
We are not aware of previous instances of short-time Fourier restriction
analysis combined with modified energies for quartic or higher nonlinearities.
We hope that the arguments of the present work can be applied more
generally. For instance, the model 
\[
\partial_{t}u+\mathcal{H}\partial_{xx}u=\partial_{x}(e^{u})
\]
seems to be in the scope of the methods of the paper.

We end the introduction by explaining key steps of the short-time
analysis. Further details of the proofs are provided in Section \ref{sec:ProofMainResults}.
For solutions, the above program leads to the following set of estimates
for solutions $u$ in the short-time function space $F^{s}(T)$, $s>1/2$,
$\varepsilon=\varepsilon(s)>0$: 
\begin{equation}
\left\{ \begin{array}{cl}
\Vert u\Vert_{F^{s}(T)} & \lesssim\Vert u\Vert_{E^{s}(T)}+\Vert\partial_{x}(u^{k})\Vert_{N^{s}(T)}\\
\Vert\partial_{x}(u^{k})\Vert_{N^{s}(T)} & \lesssim\Vert u\Vert_{F^{s}(T)}^{k}\\
\Vert u\Vert_{E^{s}(T)}^{2} & \lesssim\Vert u_{0}\Vert_{H^{s}}^{2}+\Vert u\Vert_{F^{s-\varepsilon}(T)}^{k+1}+\Vert u\Vert_{F^{s-\varepsilon}(T)}^{2k}.
\end{array} \right. \label{eq:a-priori-intro} 
\end{equation}

This gives a priori estimates and existence of solutions by standard
bootstrap and compactness arguments (cf. \cite{GuoOh2018}) for \emph{small}
initial data $u_{0}$. To deal with large initial data, we rescale
the torus yielding small initial data on tori with large period $\lambda$.
Molinet introduced this argument in the context of short-time Fourier
restriction in \cite{Molinet2012}; see also \cite{Schippa2017MBO}.
We omit the standard arguments and refer to the literature.

For the proof of Theorem \ref{thm:LWPGBO} we firstly show Lipschitz
continuity in the weaker norm $H^{-1/4}$ by the set of estimates
for $v=u_{1}-u_{2}$, $u_{i}$ solutions to \eqref{eq:GeneralizedBenjaminOnoEquation}
in $F^{s}(T)$: 
\begin{equation}
\left\{ \begin{array}{cl}
\Vert v\Vert_{F^{-1/4}(T)} & \lesssim\Vert\partial_{x}(vu^{k-1})\Vert_{N^{-1/4}(T)}+\Vert v\Vert_{E^{-1/4}(T)}\\
\Vert\partial_{x}(vu^{k-1})\Vert_{N^{-1/4}(T)} & \lesssim\Vert v\Vert_{F^{-1/4}(T)}(\Vert u_{1}\Vert_{F^{3/4}(T)}+\Vert u_{2}\Vert_{F^{3/4}(T)})^{k-1}\\
\Vert v\Vert_{E^{-1/4}(T)}^{2} & \lesssim\Vert v(0)\Vert_{H^{-1/4}}^{2}\\
 & \qquad+\Vert v\Vert_{F^{-1/4}(T)}^{2}(\Vert u_{1}\Vert_{F^{3/4}(T)}+\Vert u_{2}\Vert_{F^{3/4}(T)})^{k-1}\\
 & \qquad+\Vert v\Vert_{F^{-1/4}(T)}^{2}(\Vert u_{1}\Vert_{F^{3/4}(T)}+\Vert u_{2}\Vert_{F^{3/4}(T)})^{2k-2}.
\end{array}\right. \label{eq:weak-Lipschitz-intro}
\end{equation}
In the above display $u^{m}$ denotes a linear combination of $u_{1}^{i}u_{2}^{m-i}$,
$i=0,\ldots,m$. This set of estimates yields Lipschitz continuous
dependence in $H^{-1/4}$ for \emph{small} initial data in $H^{3/4}$.
We extend this to large initial data by rescaling the torus as above.
To prove continuous dependence in $H^{3/4}$, we prove in addition
the following set of estimates: 
\begin{equation}
\left\{ \begin{array}{cl}
\Vert v\Vert_{F^{3/4}(T)} & \lesssim\Vert\partial_{x}(vu^{k-1})\Vert_{F^{3/4}(T)}+\Vert v\Vert_{E^{3/4}(T)}\\
\Vert\partial_{x}(vu^{k-1})\Vert_{N^{3/4}(T)} & \lesssim\Vert v\Vert_{F^{3/4}(T)}(\Vert u_{1}\Vert_{F^{3/4}(T)}+\Vert u_{2}\Vert_{F^{3/4}(T)})^{k-1}\\
\Vert v\Vert_{E^{3/4}(T)}^{2} & \lesssim\Vert v(0)\Vert_{H^{3/4}}^{2}\\
 & \quad+\Vert v\Vert_{F^{3/4}(T)}^{2}(\Vert v\Vert_{F^{3/4}(T)}+\Vert u_{2}\Vert_{F^{3/4}(T)})^{k-1}\\
 & \quad+\Vert v\Vert_{F^{-1/4}(T)}\Vert v\Vert_{F^{3/4}(T)}\Vert u_{2}\Vert_{F^{7/4}(T)}\\
 & \qquad\quad(\Vert v\Vert_{F^{3/4}(T)}+\Vert u_{2}\Vert_{F^{3/4}(T)})^{k-2}\\
 & \quad+\Vert v\Vert_{F^{3/4}(T)}^{2}(\Vert v\Vert_{F^{3/4}(T)}+\Vert u_{2}\Vert_{F^{3/4}(T)})^{2k-2}\\
 & \quad+\Vert v\Vert_{F^{-1/4}(T)}\Vert v\Vert_{F^{3/4}(T)}\Vert u_{2}\Vert_{F^{7/4}(T)}\\
 & \qquad\quad(\Vert v\Vert_{F^{3/4}(T)}+\Vert u_{2}\Vert_{F^{3/4}(T)})^{2k-3}.
\end{array}\right.\label{eq:continuity-intro}
\end{equation}
We finish the proof of Theorem \ref{thm:LWPGBO} for \textit{small}
initial data by a variant of the Bona--Smith method (cf. \cite{BonaSmith1975,IonescuKenigTataru2008,Schippa2020DispersionGeneralizedBenjaminOno}).

The case of large initial data additionally requires rescaling to
small initial data on tori with large periods as above. For this,
we need to modify the Sobolev weights for the frequencies less than
$1$ \eqref{eq:DefLowHighSobolevSpace}.

An alternative approach to make the bootstrap argument work for large data is to trade regularity in modulation for powers of $T$ (cf. \cite[Lemma~3.4]{GuoOh2018}). In the present context this seems difficult to realize as our proof of the nonlinear and energy estimates require the full range of modulation regularity for $U^2$-/$V^2$-spaces. This corresponds to the endpoints $b=-1/2$ and $b=1/2$ and does not allow for applying \cite[Lemma~3.4]{GuoOh2018} or a suitable variant for $U^2$-/$V^2$-spaces.


\textit{Outline of the paper.} In Section \ref{sec:Notations} we
introduce notations, function spaces, and recall short--time (bilinear)
Strichartz estimates. In Section \ref{sec:ProofMainResults} we conclude
the proofs of the main results with the crucial short-time nonlinear
and energy estimates at hand. In Section \ref{sec:NonlinearEstimates}
we propagate the nonlinear interaction. In Section \ref{sec:EnergyEstimateSolutions}
we bound the energy norm for solutions and in Section \ref{sec:DifferenceEstimate}
the energy norm for differences of solutions.

In the following we assume that $k$, the power of the nonlinearity,
satisfies $k\geq4$. Moreover, we suppress dependence on $k$ for
the implicit constants. The parameter $\lambda$ (see Section \ref{subsec:Fourier})
is always assumed to be $\lambda\in2^{\N_{0}}$ and $\lambda\geq1$.
The dyadic frequencies range from $2^{\Z}\cap[\lambda^{-1},\infty)$.

\section{\label{sec:Notations}Function spaces, and linear and bilinear short-time
estimates}

\subsection{\label{subsec:Fourier}Fourier analysis on $\lambda\T$}

As mentioned above, the (local-in-time) large-data-theory is reduced
to the small-data-theory via a scaling argument on circles. For this
purpose, we need to develop our arguments working uniformly for functions
with large periods. Set $\lambda\T=\R/(2\pi\lambda\Z)$ with $\lambda\geq1$.
The Fourier transform of a function on $\lambda\T$ will have the
domain $\Z/\lambda$.

Throughout this article, we assume 
\[
\lambda\in2^{\N_{0}}\quad\text{so that}\quad\lambda\geq1.
\]
We will also assume that the dyadic frequencies always range in $[\lambda^{-1},\infty)$,
e.g. 
\[
N,N_{i},M,M_{i},K,K_{i},\dots\in2^{\Z}\cap[\lambda^{-1},\infty).
\]

We define the Lebesgue spaces $L^{p}(\lambda\T)$ on $\lambda\T$
through the norm 
\[
\Vert f\Vert_{L_{\lambda}^{p}}=\left(\int_{\lambda\T}|f(x)|^{p}dx\right)^{1/p},\quad1\leq p<\infty,
\]
with the usual modification for $p=\infty$.

We turn to the Fourier transform on $\lambda\T$. As guideline for
the conventions from below, we require that Plancherel's theorem remains
valid; see \cite{CollianderKeelStaffilaniTakaokaTao2002}. The Fourier
coefficients of $f\in L^{1}(\lambda\T)$ are given by 
\[
\hat{f}(\xi)=\int_{\lambda\T}f(x)e^{-i\xi x}dx,\quad\xi\in\Z/\lambda
\]
such that we have the Fourier inversion formula 
\[
f(x)=\frac{1}{\lambda}\sum_{\xi\in\Z/\lambda}\hat{f}(\xi)e^{ix\xi}
\]
and Plancherel's theorem: 
\[
\|f\|_{L_{\lambda}^{2}}^{2}\sim\frac{1}{\lambda}\sum_{\xi\in\Z/\lambda}|\hat{f}(\xi)|^{2}.
\]

The Littlewood-Paley projectors are defined as follows. Let $\chi:\R\to\R_{\geq0}$
be a smooth, compactly supported, radially decreasing function with
$\chi(x)=1$ for $|x|\leq1$ and $\chi(x)=0$ for $|x|\geq2$. For
dyadic\footnote{Recall that we assume the dyadic numbers to range in $2^{\Z}\cap[\lambda^{-1},\infty)$
throughout this article.} $\mu$, set $\chi_{\mu}(x)=\chi_{\lambda^{-1}}(x)=\chi(\lambda x)$
if $\mu=\lambda^{-1}$ and $\chi_{\mu}(x)=\chi(x/2\mu)-\chi(x/\mu)$
otherwise. For the sequence of functions $\{\chi_{\lambda^{-1}},\chi_{2\lambda^{-1}},\chi_{4\lambda^{-1}},\dots\}$,
we denote the corresponding Fourier multipliers by $P_{\lambda^{-1}}$,
$P_{2\lambda^{-1}}$, $\dots$. We refer to these as Littlewood-Paley
projectors. We note that Bernstein's inequality holds as on $\T$
and $\R$, uniformly in $\lambda$.

The $L^{2}$-based Sobolev spaces $H^{s}(\lambda\T)$ on $\lambda\T$
are defined through the norm 
\[
\Vert f\Vert_{H_{\lambda}^{s}}^{2}=\frac{1}{\lambda}\sum_{\xi\in\Z/\lambda}\langle\xi\rangle^{2s}|\hat{f}(\xi)|^{2},\quad s\in\R,
\]
where we denoted $\langle\xi\rangle=(\xi^{2}+1)^{1/2}$. In view of
Plancherel's theorem, $H^{0}(\lambda\T)=L^{2}(\lambda\T)$. In terms
of Littlewood-Paley projectors, we have 
\[
\Vert f\Vert_{H_{\lambda}^{s}}^{2}\approx\sum_{N<1}\|P_{N}f\|_{L_{\lambda}^{2}}^{2}+\sum_{N\geq1}N^{2s}\Vert P_{N}f\Vert_{L_{\lambda}^{2}}^{2}.
\]

However, as \eqref{eq:GeneralizedBenjaminOnoEquation} for $k\geq4$
is $L^{2}$-supercritical, the usual Sobolev spaces $H^{s}$ does
not work well with the scaling argument. The remedy is to consider
a norm with a different weight on low frequencies. Set for $\overline{s}=(s_{1},s_{2})\in\R^{2}$:
\begin{equation}
\Vert f\Vert_{H_{\lambda}^{\overline{s}}}^{2}=\Vert f\Vert_{H_{\lambda}^{s_{1},s_{2}}}^{2}=\frac{1}{\lambda}\sum_{\substack{\xi\in\Z/\lambda,\\
|\xi|<1
}
}(\lambda^{-1}+|\xi|)^{2s_{1}}|\hat{f}(\xi)|^{2}+\frac{1}{\lambda}\sum_{\substack{\xi\in\Z/\lambda,\\
|\xi|\geq1
}
}|\xi|^{2s_{2}}|\hat{f}(\xi)|^{2}.\label{eq:DefLowHighSobolevSpace}
\end{equation}
Note that $H^{s}(\lambda\T)=H^{0,s}(\lambda\T)$. It is convenient
to introduce the notations 
\[
N^{\overline{s}}=N^{s_{1},s_{2}}=\begin{cases}
N^{s_{1}} & \text{if }N<1,\\
N^{s_{2}} & \text{if }N\geq1,
\end{cases}
\]
and 
\begin{align*}
\overline{s}+s' & =(s_{1},s_{2}+s'),\qquad s'\in\R,\\
cs' & =(cs_{1},cs_{2}),\qquad c\in\R.
\end{align*}
so that 
\[
\|f\|_{H_{\lambda}^{\overline{s}}}^{2}\approx\sum_{N<1}N^{2s_{1}}\|P_{N}f\|_{L_{\lambda}^{2}}^{2}+\sum_{N\geq1}N^{2s_{2}}\Vert P_{N}f\Vert_{L_{\lambda}^{2}}^{2}\approx\sum_{N}N^{2\overline{s}}\|P_{N}f\|_{L^{2}}^{2}.
\]

For the scaling argument, we will use $\overline{s}=(s_{1},s_{2})$
such that $s_{c}<s_{1}\leq s_{2}$, i.e., the \emph{subcritical }regularities.
Let $s_{2}=s$ as in our main theorems. Any large data $u_{0}\in H^{s}(\T)$
is reduced to a small data by 
\[
\lambda^{-\frac{1}{k-1}}\Vert u_{0}(\lambda^{-1}\cdot)\Vert_{H_{\lambda}^{\overline{s}}}\to0\text{ as }\lambda\to\infty
\]
More precisely, we have 
\begin{equation}
\lambda^{s_{c}-s_{2}}\|u_{0}\|_{H^{s}}\lesssim\|\lambda^{-\frac{1}{k-1}}u_{0}(\lambda^{-1}\cdot)\|_{H_{\lambda}^{\overline{s}}}\lesssim\lambda^{s_{c}-s_{1}}\|u_{0}\|_{H^{s}}.\label{eq:scaling}
\end{equation}

\subsection{$U^{p}$-/$V^{p}$-function spaces}

We consider short-time $U^{p}$-/$V^{p}$-function spaces as in \cite{Schippa2020ShriraEquation}.
Adapted $U^{p}$-/$V^{p}$-function spaces to treat nonlinear dispersive
equations were introduced in the work \cite{HadacHerrKoch2009,HadacHerrKoch2009Erratum}.
There are several reasons for this choice. Firstly, (bi-)linear estimates
for linear solutions transfer well to these spaces. Secondly, the
duality estimates are also available. Lastly, these spaces behave
nicely with sharp time localizations. We shall be brief and for details
refer to \cite{Schippa2020ShriraEquation}.

For a time interval $I$, we set 
\begin{align*}
\Vert u\Vert_{U_{BO}^{p}(I)_{\lambda}} & =\Vert e^{-t\mathcal{H}\partial_{xx}}u\Vert_{U^{p}(I;L_{\lambda}^{2})},\\
\Vert v\Vert_{V_{BO}^{p}(I)_{\lambda}} & =\Vert e^{-t\mathcal{H}\partial_{xx}}v\Vert_{V^{p}(I;L_{\lambda}^{2})},\\
\Vert f\Vert_{DU_{BO}^{p}(I)_{\lambda}} & =\Vert e^{-t\mathcal{H}\partial_{xx}}v\Vert_{DU^{p}(I;L_{\lambda}^{2})}.
\end{align*}

Let $T\in(0,1]$ and $\overline{s}=(s_{1},s_{2})$ be given. We define
the $F_{\lambda}^{\overline{s}}(T)$-norm (for solutions) and the
$N_{\lambda}^{\overline{s}}(T)$-norm (for nonlinearities) by 
\begin{align*}
\Vert u\Vert_{F_{\lambda}^{\overline{s}}(T)}^{2} & =\sum_{\substack{\lambda^{-1}\leq N\leq1,\\
N\in2^{\Z}
}
}N^{2s_{1}}\Vert P_{N}u\Vert_{U_{BO}^{2}([0,T])_{\lambda}}^{2}+\sum_{N\in2^{\N_{0}}}N^{2s_{2}}\sup_{\substack{I\subseteq[0,T],\\
|I|=N^{-1}
}
}\Vert P_{N}u\Vert_{U_{BO}^{2}(I)_{\lambda}}^{2},\\
\Vert v\Vert_{N_{\lambda}^{\overline{s}}(T)}^{2} & =\sum_{\substack{\lambda^{-1}\leq N\leq1,\\
N\in2^{\Z}
}
}N^{2s_{1}}\Vert P_{N}u\Vert_{DU_{BO}^{2}([0,T])_{\lambda}}^{2}+\sum_{N\in2^{\N_{0}}}N^{2s_{2}}\sup_{\substack{I\subseteq[0,T],\\
|I|=N^{-1}
}
}\Vert P_{N}u\Vert_{DU_{BO}^{2}(I)_{\lambda}}^{2}.
\end{align*}
As in \cite{Schippa2020ShriraEquation}, if $T\leq N^{-1}$, read
(likewise for $V^{p}$ and $DU^{p}$): 
\[
\sup_{\substack{I\subseteq[0,T],\\
|I|=N^{-1}
}
}\Vert f\Vert_{U_{BO}^{p}(I)}=\Vert f\Vert_{U_{BO}^{p}([0,T])},
\]
We define the energy norm $E_{\lambda}^{\overline{s}}(T)$ (for solutions)
by 
\[
\Vert u\Vert_{E_{\lambda}^{\overline{s}}(T)}^{2}=\Vert P_{\leq1}u(0)\Vert_{H_{\lambda}^{\overline{s}}}^{2}+\sum_{N>1}N^{2s_{2}}\sup_{t\in[0,T]}\Vert P_{N}u(t)\Vert_{L_{\lambda}^{2}}^{2}.
\]

A consequence of the definition of the function spaces is the following
linear estimate; see \cite{ChristHolmerTataru2012} for the proof
in a different context.
\begin{lem}[Linear estimate]
\label{lem:UV-linear-estimate}Let $T\in(0,1]$, $\overline{s}=(s_{1},s_{2})$,
and let $u$ be a smooth solution to 
\[
\partial_{t}u+\mathcal{H}\partial_{xx}u=v\text{ on }(-T,T)\times\lambda\T.
\]
Then, we find the following estimate to hold: 
\[
\Vert u\Vert_{F_{\lambda}^{\overline{s}}(T)}\lesssim\Vert u\Vert_{E_{\lambda}^{\overline{s}}(T)}+\Vert v\Vert_{N_{\lambda}^{\overline{s}}(T)}.
\]
\end{lem}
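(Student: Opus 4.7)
The plan is to Littlewood--Paley decompose and treat each dyadic block $P_N u$ separately on the frequency-localized intervals appearing in the definition of $F_{\lambda}^{\overline{s}}(T)$, using only two structural features of the $U_{BO}^{2}$-/$DU_{BO}^{2}$-spaces: first, that a homogeneous solution $e^{-t\mathcal{H}\partial_{xx}} f$ has $U_{BO}^{2}(I)$-norm equal to $\|f\|_{L^2_\lambda}$; second, the defining embedding estimate
\begin{equation*}
\Bigl\| \int_{a}^{\cdot} e^{-(\cdot-t')\mathcal{H}\partial_{xx}} g(t')\,dt' \Bigr\|_{U_{BO}^{2}([a,b])_\lambda} \lesssim \|g\|_{DU_{BO}^{2}([a,b])_\lambda}.
\end{equation*}

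First I would apply $P_N$ to the equation to obtain $\partial_{t} P_N u + \mathcal{H}\partial_{xx} P_N u = P_N v$. Fix a subinterval $I = [a,b] \subseteq [0,T]$ with $|I| = N^{-1}$ when $N \geq 1$, or $I = [0,T]$ with starting point $a=0$ when $\lambda^{-1} \leq N \leq 1$. Duhamel's formula on $I$ reads
\begin{equation*}
P_N u(t) = e^{-(t-a)\mathcal{H}\partial_{xx}} P_N u(a) + \int_{a}^{t} e^{-(t-t')\mathcal{H}\partial_{xx}} P_N v(t')\,dt'.
\end{equation*}
The first summand has $U_{BO}^{2}(I)_\lambda$-norm equal to $\|P_N u(a)\|_{L^2_\lambda}$, which is bounded by $\sup_{t \in [0,T]}\|P_N u(t)\|_{L^2_\lambda}$ for $N \geq 1$, and by $\|P_N u(0)\|_{L^2_\lambda}$ for $N \leq 1$ (since we chose $a = 0$ in the low-frequency case); both are precisely the pieces entering $\|u\|_{E_\lambda^{\overline{s}}(T)}$. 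The second summand has $U_{BO}^{2}(I)_\lambda$-norm bounded by $\|P_N v\|_{DU_{BO}^{2}(I)_\lambda}$ by the defining embedding.

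The final step is to take the supremum over admissible intervals $I$ on each side, multiply by the dyadic weight $N^{2s_2}$ or $N^{2s_1}$ according to whether $N \geq 1$ or $N < 1$, and square-sum over $N \in 2^{\Z} \cap [\lambda^{-1},\infty)$. This yields
\begin{equation*}
\|u\|_{F_\lambda^{\overline{s}}(T)}^{2} \lesssim \|u\|_{E_\lambda^{\overline{s}}(T)}^{2} + \|v\|_{N_\lambda^{\overline{s}}(T)}^{2},
\end{equation*}
which is the claim after taking square roots.

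I do not anticipate a real obstacle: the only place one must be careful is the low-frequency regime $\lambda^{-1} \leq N \leq 1$, where the energy norm only records the initial time and not the full supremum on $[0,T]$, so the Duhamel starting point must be taken to be $a = 0$ (rather than an arbitrary left endpoint of a generic subinterval). With this bookkeeping the estimate is an essentially tautological consequence of the construction of the function spaces; a version of it has already appeared in \cite{ChristHolmerTataru2012} and the analogous bound in \cite{Schippa2020ShriraEquation}.
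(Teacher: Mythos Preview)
Your proposal is correct and is exactly the standard argument the paper has in mind: the paper does not give a proof at all but simply refers to \cite{ChristHolmerTataru2012}, and the Duhamel-on-each-frequency-interval computation you outline is precisely the content of that reference adapted to the present $U_{BO}^{2}/DU_{BO}^{2}$ setup. Your care with the low-frequency regime (forcing $a=0$ so that the homogeneous term matches the $E_\lambda^{\overline{s}}$-norm, which only records initial data there) is the one bookkeeping point, and you handle it correctly.
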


\subsection{Short-time linear and bilinear Strichartz estimates}

Here we record short-time linear and bilinear Strichartz estimates.
\begin{prop}[Short-time linear Strichartz estimates]
\label{prop:ShorttimeLinearStrichartz}Let $p,q\in[2,\infty]$ be
such that $\frac{2}{q}+\frac{1}{p}=\frac{1}{2}$. Then, we find the
following estimate to hold:\footnote{Note that $\lambda N^{-1}$ is the maximal time scale, on which a
wave packet with frequency $N$ cannot distinguish the domains $\lambda\T$
and $\R$.} 
\[
\Vert P_{N}e^{-t\mathcal{H}\partial_{xx}}f\Vert_{L_{t}^{q}([0,\lambda N^{-1}],L_{\lambda}^{p})}\lesssim\Vert P_{N}f\Vert_{L_{\lambda}^{2}}.
\]
\end{prop}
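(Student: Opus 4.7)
The plan is to prove the endpoint case $(q,p) = (4,\infty)$ and recover the full admissible range by interpolation with the trivial $L_{t}^{\infty}L_{\lambda}^{2}$ bound coming from unitarity of $e^{-t\mathcal{H}\partial_{xx}}$. The first reduction is to peel off the Hilbert transform: since the Fourier symbol of $\mathcal{H}\partial_{xx}$ equals $i\xi|\xi|$, one can write $e^{-t\mathcal{H}\partial_{xx}} = e^{it\partial_{xx}}P_{+} + e^{-it\partial_{xx}}P_{-}$, where $P_{\pm}$ project onto positive and negative frequencies respectively. By the triangle inequality it then suffices to prove
\[
\|P_{N} e^{\pm it\partial_{xx}} f\|_{L_{t}^{4}([0,\lambda N^{-1}], L_{\lambda}^{\infty})} \lesssim \|P_{N} f\|_{L_{\lambda}^{2}},
\]
i.e.\ a short-time $L^{4}L^{\infty}$ Strichartz estimate for the 1D Schr\"odinger propagator on $\lambda\T$ with frequency-localized data.

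The second step is a standard $TT^{*}$ argument. Setting $Tf = P_{N} e^{it\partial_{xx}} f$, the composition $TT^{*}$ is a convolution operator in space and time with kernel
\[
K(t,x) = \lambda^{-1}\sum_{\xi \in \Z/\lambda} \chi_{N}^{2}(\xi)\, e^{i(x\xi - t\xi^{2})}.
\]
By Hardy--Littlewood--Sobolev in the time variable, the desired $L^{4}L^{\infty}$ bound reduces to the dispersive kernel estimate
\begin{equation*}
\|K(t,\cdot)\|_{L_{\lambda}^{\infty}} \lesssim |t|^{-1/2} \qquad \text{for } 0 < |t| \leq \lambda N^{-1}.
\end{equation*}

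To obtain the kernel bound, I would use Poisson summation to write $K(t,x) = \sum_{m \in \Z} K_{\R}(t, x - 2\pi\lambda m)$, where $K_{\R}$ denotes the analogous Euclidean kernel. Stationary phase on $\R$ yields the standard dispersive estimate $\|K_{\R}(t,\cdot)\|_{L^{\infty}(\R)} \lesssim |t|^{-1/2}$, with the bulk of the mass concentrated in the Ehrenfest window $\{|x| \lesssim N|t|\}$ and non-stationary (rapid) decay outside. Since $N|t| \leq \lambda$ on the time interval in question, the translates $x \mapsto x - 2\pi\lambda m$ carry these windows into disjoint regions of $\lambda\T$, so for any $x \in \lambda\T$ at most one term of the Poisson sum contributes to the bulk, giving the desired uniform bound.

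The main technical obstacle lies in making this Poisson-summation bound truly uniform up to the boundary $|t| = \lambda N^{-1}$, where the Ehrenfest window becomes comparable to the period $2\pi\lambda$ and the translates begin to overlap. The standard remedy is to quantify the non-stationary decay of $K_{\R}$ outside its main window by repeated integration by parts (or van der Corput), producing enough polynomial decay in $m$ so that the tails from $|m|\geq 1$ are absolutely summable and dominated by the $|t|^{-1/2}$ bound from $m=0$. This is the classical Staffilani--Tataru \cite{StaffilaniTataru2002} strategy, adapted here to the rescaled circle $\lambda\T$; see also \cite{BurqGerardTzvetkov2002} for related Strichartz estimates on compact manifolds.
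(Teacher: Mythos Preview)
Your argument is correct and self-contained, but it takes a different route from the paper. The paper does exactly the same first reduction as you --- splitting $e^{-t\mathcal{H}\partial_{xx}} = e^{it\partial_{xx}}P_{+} + e^{-it\partial_{xx}}P_{-}$ --- but then simply \emph{cites} the short-time Schr\"odinger Strichartz estimate on $\T$ from \cite{BurqGerardTzvetkov2004,StaffilaniTataru2002} for the case $\lambda=1$, and passes to general $\lambda$ by the scaling $u_{\lambda}(t,x)=u(\lambda^{-2}t,\lambda^{-1}x)$, under which a frequency-$N$ packet on $\lambda\T$ with time window $\lambda N^{-1}$ becomes a frequency-$\lambda N$ packet on $\T$ with time window $(\lambda N)^{-1}$; the admissibility relation $2/q+1/p=1/2$ makes the estimate scale-invariant. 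You instead reprove the Schr\"odinger estimate from scratch on $\lambda\T$ via $TT^{*}$, a dispersive kernel bound, and Poisson summation. This buys you a self-contained argument and makes the role of the time scale $\lambda N^{-1}$ transparent (it is exactly the threshold at which the Ehrenfest window fills the period), at the cost of redoing work already in the literature. The paper's approach is shorter because the scaling step is a one-line computation once the $\lambda=1$ case is in hand.
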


The analogous linear estimates for the Schr\"odinger propagator $e^{\pm it\partial_{xx}}$
were proved in \cite{BurqGerardTzvetkov2004,StaffilaniTataru2002}.
Proposition \ref{prop:ShorttimeLinearStrichartz} for $\lambda=1$
follows after projecting to positive and negative frequencies due
to $P_{\pm}e^{-t\mathcal{H}\partial_{xx}}=e^{\pm it\partial_{xx}}P_{\pm}$,
where $P_{\pm}$ is the Fourier multiplier operator $\mathbf{1}_{\pm|\xi|\geq0}$.
The general case $\lambda\geq1$ follows from the scaling argument.
We omit the proof.
\begin{prop}[Short-time bilinear Strichartz estimates]
\label{prop:ShorttimeBilinearBenjaminOnoStrichartzEstimate}Let $r\geq\lambda^{-1}$
and $N\geq1$. Let $\eta_{1},\eta_{2}\in\R$ be such that $||\xi_{1}|-|\xi_{2}||\gtrsim N>0$
for $\xi_{i}\in B(\eta_{i},r)$. Then, we find the following estimate
to hold for $f_{i}\in L^{2}(\lambda\T)$ with $\text{supp}(\hat{f}_{i})\subseteq B(\eta_{i},r)$:
\begin{equation}
\Vert e^{-t\mathcal{H}\partial_{xx}}f_{1}e^{-t\mathcal{H}\partial_{xx}}f_{2}\Vert_{L_{t}^{2}([0,\lambda N^{-1}],L_{\lambda}^{2})}\lesssim N^{-1/2}\Vert f_{1}\Vert_{L_{\lambda}^{2}}\Vert f_{2}\Vert_{L_{\lambda}^{2}}.\label{eq:ShorttimeBilinearBenjaminOnoStrichartzEstimate}
\end{equation}
\end{prop}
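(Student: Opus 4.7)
The plan is a direct Plancherel--plus--counting argument on $\lambda\T$. Expanding on the Fourier side, one has $u_1u_2 = \lambda^{-2}\sum_{\xi_1,\xi_2} e^{i(\xi_1+\xi_2)x}e^{-it\Phi(\xi_1,\xi_2)}\hat f_1(\xi_1)\hat f_2(\xi_2)$ with resonance $\Phi(\xi_1,\xi_2)=|\xi_1|\xi_1+|\xi_2|\xi_2$. Plancherel in $x$ gives
\[
\|u_1u_2(t,\cdot)\|_{L^2_\lambda}^2 = \frac{1}{\lambda^3}\sum_\xi |F(t,\xi)|^2, \qquad F(t,\xi)=\sum_{\xi_2} e^{-it\Phi_\xi(\xi_2)}a_\xi(\xi_2),
\]
where $a_\xi(\xi_2)=\hat f_1(\xi-\xi_2)\hat f_2(\xi_2)$ and $\Phi_\xi(\xi_2):=\Phi(\xi-\xi_2,\xi_2)$. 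The function $\Phi_\xi$ is $C^1$ with $\Phi_\xi'(\xi_2)=2(|\xi_2|-|\xi-\xi_2|)$, so the separation hypothesis $||\xi_1|-|\xi_2||\gtrsim N$ yields $|\Phi_\xi'|\gtrsim N$ with constant sign on the support of $a_\xi$. Consequently, the values $\{\Phi_\xi(\xi_2)\}$ for $\xi_2\in B(\eta_2,r)\cap(\Z/\lambda)$ are $\gtrsim N/\lambda$-separated.

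Next I would localize in time by a smooth cutoff $\chi$ which equals $1$ on $[0,T]$, is supported on $[-T,2T]$, with $T=\lambda N^{-1}$; so $|\hat\chi(\tau)|\lesssim T(1+T|\tau|)^{-100}$ and $\|\hat\chi\|_{L^1_\tau}\lesssim 1$. By Plancherel in $t$,
\[
\int_0^T|F(t,\xi)|^2 dt \leq \|\chi F\|_{L^2_t}^2 \sim \int |\widehat{\chi F}(\tau,\xi)|^2 d\tau, \qquad \widehat{\chi F}(\tau,\xi)=\sum_{\xi_2}\hat\chi(\tau+\Phi_\xi(\xi_2))a_\xi(\xi_2).
\]
Applying Cauchy--Schwarz in $\xi_2$,
\[
|\widehat{\chi F}(\tau,\xi)|^2 \leq \bigl(\textstyle\sum_{\xi_2}|\hat\chi(\tau+\Phi_\xi(\xi_2))|\bigr)\bigl(\sum_{\xi_2}|\hat\chi(\tau+\Phi_\xi(\xi_2))|\,|a_\xi(\xi_2)|^2\bigr).
\]
The first factor is bounded by $T$ uniformly in $(\tau,\xi)$: since the shifts $\tau+\Phi_\xi(\xi_2)$ are $\gtrsim N/\lambda = 1/T$-separated, the rapid decay of $\hat\chi$ gives $\sum_{\xi_2}|\hat\chi(\tau+\Phi_\xi(\xi_2))|\lesssim T\sum_k (1+k)^{-100}\lesssim T$.

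Integrating in $\tau$ and using $\|\hat\chi\|_{L^1_\tau}\lesssim 1$, then summing in $\xi$ via Fubini, gives
\[
\sum_\xi\int_0^T|F(t,\xi)|^2 dt \lesssim T\sum_\xi\sum_{\xi_2}|\hat f_1(\xi-\xi_2)\hat f_2(\xi_2)|^2 = T\|\hat f_1\|_{\ell^2}^2\|\hat f_2\|_{\ell^2}^2.
\]
Combined with Plancherel $\|\hat f_i\|_{\ell^2}^2 = \lambda\|f_i\|_{L^2_\lambda}^2$, this yields $\|u_1u_2\|_{L^2_tL^2_\lambda}^2\lesssim \lambda^{-3}\cdot T\cdot \lambda^2\|f_1\|_2^2\|f_2\|_2^2 = N^{-1}\|f_1\|_2^2\|f_2\|_2^2$, as desired.

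The main delicate point is the $O(T)$ bound on the Schur factor: the choice $T=\lambda N^{-1}$ is precisely at the Ingham threshold where the $\Phi_\xi$--separation equals $1/T$, so a sharp time cutoff would only yield $\min(T,1/|\tau|)$ whose $\tau$--integral carries a logarithmic divergence. The smooth cutoff with Schwartz decay of $\hat\chi$ handles this borderline scaling cleanly, and the support restriction $r\geq \lambda^{-1}$ only enters to guarantee that $B(\eta_i,r)$ contains lattice points at all. The argument requires no sign decomposition of the frequencies, as the phase analysis is uniform in sign.
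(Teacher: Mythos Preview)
Your argument is correct. The Plancherel-plus-counting approach you give is the standard route to such bilinear estimates on tori (going back to Bourgain and elaborated by Moyua--Vega and Hani), and your identification of the key derivative $\Phi_\xi'(\xi_2)=2(|\xi_2|-|\xi-\xi_2|)$ is exactly the transversality mechanism the paper points to when it remarks that the relevant condition for the Benjamin--Ono dispersion $h(\xi)=|\xi|\xi$ is $||\xi_1|-|\xi_2||\gtrsim N$ rather than $|\xi_1-\xi_2|\gtrsim N$.

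The paper itself does not give a self-contained proof; it defers to \cite{Schippa2020ShriraEquation} for the Schr\"odinger analogue and indicates that the Benjamin--Ono case follows by the same argument once the transversality hypothesis is adjusted. Your write-up thus supplies precisely the details the paper omits. One small comment: your claim that $\Phi_\xi'$ has constant sign on the support of $a_\xi$ relies on the fact that $\{\xi_2 : \xi_2\in B(\eta_2,r),\ \xi-\xi_2\in B(\eta_1,r)\}$ is an interval, so the nonvanishing continuous function $|\xi_2|-|\xi-\xi_2|$ keeps its sign there; this is implicit in what you wrote but worth stating since monotonicity is what turns the lower bound on $|\Phi_\xi'|$ into genuine separation of the values $\Phi_\xi(\xi_2)$ on the lattice.
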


\begin{rem}
The frequency separation in its \emph{magnitude} is required. 
\end{rem}

For the Schr\"odinger case, it is proved in \cite{Schippa2020ShriraEquation}
that Proposition \ref{prop:ShorttimeBilinearBenjaminOnoStrichartzEstimate}
holds under the (weaker) assumption $|\xi_{1}-\xi_{2}|\gtrsim N>0$
instead of $||\xi_{1}|-|\xi_{2}||\gtrsim N>0$. This is the transversality
assumption $|\partial_{\xi}h(\xi_{1})-\partial_{\xi}h(\xi_{2})|\gtrsim N$,
where $h(\xi)=|\xi|^{2}$ is the dispersion relation for the linear
Schr\"odinger flow. For the Benjamin-Ono case, this becomes $||\xi_{1}|-|\xi_{2}||\gtrsim N$
as the dispersion relation is $h(\xi)=|\xi|\xi$. We omit the proof
of Proposition \ref{prop:ShorttimeBilinearBenjaminOnoStrichartzEstimate}
and refer to \cite{Schippa2020ShriraEquation}\@.

For Schr\"odinger equations on compact manifolds, such short-time
bilinear estimates for dyadically separated frequencies were discussed
in \cite{MoyuaVega2008,Hani2012}.

In most cases, we apply Proposition \ref{prop:ShorttimeBilinearBenjaminOnoStrichartzEstimate}
for dyadic frequency interactions. Moreover, we will restrict the
time interval $[0,\lambda N^{-1}]$ to $[0,N^{-1}]$. One consequence
of this restriction is the gain when a low frequency wave interacts. 
\begin{cor}[Bilinear Strichartz for dyadic frequency interactions]
\label{cor:HighLowShorttimeEstimates}Let $N_{1}\gtrsim1$. Let $u_{i}(t,x)=[e^{-t\mathcal{H}\partial_{xx}}f_{i}](x)$
for $f_{i}\in L_{\lambda}^{2}$, where $i\in\{1,\dots,4\}$. 
\begin{itemize}
\item (High$\times$Low) Assume $N_{1}\gg N_{2}$. Then, we find the following
estimate to hold: 
\[
\Vert P_{N_{1}}u_{1}P_{N_{2}}u_{2}\Vert_{L_{t}^{2}([0,N_{1}^{-1}],L_{\lambda}^{2})}\lesssim N_{1}^{-1/2}N_{2}^{1/2,0}\Vert f_{1}\Vert_{L_{\lambda}^{2}}\Vert f_{2}\Vert_{L_{\lambda}^{2}}.
\]
\item (High$\times$High$\times$High$\to$Low) Assume $N_{1}\sim N_{2}\sim N_{3}\gg N_{4}$.
Then, we find the following estimate to hold: 
\[
\int_{0}^{N_{1}^{-1}}\int_{\lambda\T}\,\prod_{i=1}^{4}P_{N_{i}}u_{i}(s,x)\,dxds\lesssim N_{1}^{-1}N_{4}^{1/2,0}\prod_{i=1}^{4}\Vert f_{i}\Vert_{L_{\lambda}^{2}}.
\]
\end{itemize}
\end{cor}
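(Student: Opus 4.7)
The plan is to derive both estimates from the bilinear Strichartz estimate (Proposition~\ref{prop:ShorttimeBilinearBenjaminOnoStrichartzEstimate}), combined with the linear Strichartz estimate (Proposition~\ref{prop:ShorttimeLinearStrichartz}), Bernstein's inequality on $\lambda\T$, and H\"older's inequality. The subtle point, stressed in the remark after Proposition~\ref{prop:ShorttimeBilinearBenjaminOnoStrichartzEstimate}, is that the bilinear estimate requires Fourier-support separation \emph{in magnitude}, which is not automatic among same-sign high-frequency factors.

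For the High~$\times$~Low bound I would distinguish $N_{2}\geq1$ and $N_{2}<1$. When $N_{2}\geq1$ (so $N_{2}^{1/2,0}=1$), I would cover the annular Fourier support of $P_{N_{1}}f_{1}$ by balls of radius $r=N_{2}\geq\lambda^{-1}$; since $N_{1}\gg N_{2}$, each such ball paired with $\mathrm{supp}(\widehat{P_{N_{2}}f_{2}})$ satisfies $||\xi_{1}|-|\xi_{2}||\sim N_{1}$, so Proposition~\ref{prop:ShorttimeBilinearBenjaminOnoStrichartzEstimate} applies piece by piece on $[0,N_{1}^{-1}]\subseteq[0,\lambda N_{1}^{-1}]$ and almost orthogonality, both in the product Fourier supports and in the Plancherel relation $\sum_{\ell}\|f_{1,\ell}\|_{L^{2}}^{2}=\|f_{1}\|_{L^{2}}^{2}$, closes the bound. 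When $N_{2}<1$, Bernstein on $\lambda\T$ gives $\|P_{N_{2}}u_{2}\|_{L_{t}^{\infty}L_{x}^{\infty}}\lesssim N_{2}^{1/2}\|f_{2}\|_{L_{\lambda}^{2}}$, while $L^{2}$-preservation of the propagator together with H\"older in time yields $\|P_{N_{1}}u_{1}\|_{L_{t}^{2}([0,N_{1}^{-1}],L_{\lambda}^{2})}\lesssim N_{1}^{-1/2}\|f_{1}\|_{L_{\lambda}^{2}}$; H\"older in $x$ then concludes.

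For the quartilinear bound, the $x$-integration enforces $\xi_{1}+\xi_{2}+\xi_{3}+\xi_{4}=0$; with $|\xi_{i}|\sim N_{1}$ for $i\leq3$ and $|\xi_{4}|\sim N_{4}\ll N_{1}$, the three signs $\mathrm{sgn}(\xi_{i})$, $i\leq3$, cannot all coincide. I would split each $P_{N_{i}}$ ($i\leq3$) into $P_{N_{i}}^{+}+P_{N_{i}}^{-}$, which commute with $e^{-t\mathcal{H}\partial_{xx}}$, and by permutation symmetry in $\{1,2,3\}$ reduce to a configuration such as $(+,+,-)$. Cauchy--Schwarz then bounds the integral by
\[
\|P_{N_{1}}^{+}u_{1}\cdot P_{N_{3}}^{-}u_{3}\|_{L_{t,x}^{2}}\cdot\|P_{N_{2}}^{+}u_{2}\cdot P_{N_{4}}u_{4}\|_{L_{t,x}^{2}}.
\]
The second factor is $\lesssim N_{1}^{-1/2}N_{4}^{1/2,0}\|f_{2}\|\|f_{4}\|$ by the High~$\times$~Low estimate with high frequency $N_{2}\sim N_{1}$, and its Fourier support in $x$ lies in $\xi\sim N_{1}$ (since $|\xi_{4}|\ll|\xi_{2}|$). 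Consequently, by Plancherel, only the part of the first factor with Fourier support in $\xi\sim-N_{1}$ contributes to the spatial integral; this means $\xi_{1}+\xi_{3}\sim-N_{1}$, i.e.\ $||\xi_{1}|-|\xi_{3}||=\xi_{1}+|\xi_{3}|\sim N_{1}$. Inserting this Fourier localizer and applying Proposition~\ref{prop:ShorttimeBilinearBenjaminOnoStrichartzEstimate} with $r=N_{1}$ and $N\sim N_{1}$ gives $\lesssim N_{1}^{-1/2}\|f_{1}\|\|f_{3}\|$ for the first factor. Multiplying and summing over the $O(1)$ sign configurations produces $N_{1}^{-1}N_{4}^{1/2,0}\prod\|f_{i}\|$.

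The main technical hurdle is producing the magnitude-separation needed for the high--high pair in the quartilinear estimate: it is not automatic and must be extracted from the resonance identity $\sum\xi_{i}=0$, which is recovered via Plancherel \emph{after} Cauchy--Schwarz (not as an a priori input). The sign decomposition, together with the implicit Fourier restriction imposed by pairing against the other factor, is precisely the mechanism that turns an a priori problematic High~$\times$~High interaction into a case covered by Proposition~\ref{prop:ShorttimeBilinearBenjaminOnoStrichartzEstimate}.
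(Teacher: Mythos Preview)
Your High$\times$Low argument matches the paper's exactly. For the quartilinear estimate the paper takes a slightly different route: it decomposes each $P_{N_i}$ ($i\leq3$) into $O(1)$ consecutive intervals $I_j$ of length $cN_1$ and argues by case analysis that for every contributing triple $(I_1,I_2,I_3)$ some pair has $||\xi_i|-|\xi_j||\gtrsim N_1$; bilinear Strichartz is applied to that pair and the High$\times$Low bound to the remaining high--low pair. Your sign decomposition is arguably cleaner---it reads off the separation more directly from $\sum\xi_i=0$---but the endgame is the same: one bilinear and one High$\times$Low factor.

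There is one imprecision in your last step. With $\xi_1>0$ and $\xi_3<0$ one has $||\xi_1|-|\xi_3||=|\xi_1+\xi_3|$, not $\xi_1+|\xi_3|$; more importantly, Proposition~\ref{prop:ShorttimeBilinearBenjaminOnoStrichartzEstimate} requires $||\xi_1|-|\xi_2||\gtrsim N$ for \emph{all} $\xi_i\in B(\eta_i,r)$, and this fails with $r=N_1$ since $|\xi_1+\xi_3|$ ranges down to $0$ over the full dyadic shells $P_{N_1}^{+},P_{N_3}^{-}$. The output projector $P_{\sim-N_1}$ does force $|\xi_1+\xi_3|\sim N_1$ on the \emph{contributing} frequency pairs, but the proposition as stated does not accept an output restriction in place of an input one. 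The fix is immediate: further split $P_{N_1}^{+}$ and $P_{N_3}^{-}$ into $O(1)$ sub-intervals of length $cN_1$; for each sub-interval pair either the product's Fourier support misses $\{\xi\sim-N_1\}$ (so the piece drops out after pairing with the other factor), or the hypothesis of Proposition~\ref{prop:ShorttimeBilinearBenjaminOnoStrichartzEstimate} is met with $r=cN_1$ and $N\sim N_1$. This is precisely the interval decomposition the paper performs from the outset, so after patching, the two arguments coincide.
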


\begin{proof}
The first estimate for $N_{2}\gtrsim1$ follows from Proposition \ref{prop:ShorttimeBilinearBenjaminOnoStrichartzEstimate}
with almost orthogonality. For $N_{2}\ll1$, we combine H\"older's
and Bernstein's inequality to find: 
\[
\begin{split} & \quad\Vert P_{N_{1}}u_{1}P_{N_{2}}u_{2}\Vert_{L_{t}^{2}([0,N_{1}^{-1}],L_{\lambda}^{2})}\\
 & \lesssim N_{1}^{-1/2}\Vert P_{N_{1}}u_{1}P_{N_{2}}u_{2}\Vert_{L_{t}^{\infty}([0,N_{1}^{-1}],L_{\lambda}^{2})}\\
 & \lesssim N_{1}^{-1/2}\Vert P_{N_{1}}u_{1}\Vert_{L_{t}^{\infty}([0,N_{1}^{-1}],L_{\lambda}^{2})}\Vert P_{N_{2}}u_{2}\Vert_{L_{t}^{\infty}([0,N_{1}^{-1}],L_{\lambda}^{\infty})}\\
 & \lesssim N_{1}^{-1/2}N_{2}^{1/2}\Vert f_{1}\Vert_{L_{\lambda}^{2}}\Vert f_{2}\Vert_{L_{\lambda}^{2}}.
\end{split}
\]

We turn to the second estimate. Let $c$ denote a small constant (depending
on implicit constants involved in $N_{1}\sim N_{2}\sim N_{3}\gg N_{4}$).
Let $I_{j}$ denote consecutive intervals of length $cN_{1}$ and
write $P_{N_{i}}=\sum_{j=1}^{M}P_{I_{j}}P_{N_{i}}$, where $M\lesssim1$
and $P_{I_{j}}$ is a (smooth) Littlewood-Paley projector onto the
frequency interval $I_{j}$. Due to $M\lesssim1$, it is enough to
show 
\begin{equation}
\begin{split} & \quad\int_{0}^{N_{1}^{-1}}\int_{\lambda\T}P_{I_{1}}u_{1}(s,x)P_{I_{2}}u_{2}(s,x)P_{I_{3}}u_{3}(s,x)P_{N_{4}}u_{4}(s,x)\,dxds\\
 & \lesssim N_{1}^{-1}N_{4}^{1/2,0}\prod_{i=1}^{4}\Vert f_{i}\Vert_{L_{\lambda}^{2}}
\end{split}
\label{eq:LocalizedTrilinearInteraction}
\end{equation}
because the claim follows from crudely summing over the $I_{i}$.

We claim that for any nontrivial interaction in \eqref{eq:LocalizedTrilinearInteraction},
there are $i,j\in\{1,2,3\}$, $i\neq j$, such that $||\xi_{i}|-|\xi_{j}||\gtrsim N$
for $\xi_{i}\in I_{i}$ and $\xi_{j}\in I_{j}$. At first, if ($I_{1}$
and $I_{2}$) or ($I_{1}$ and $-I_{2}$) are not neighboring intervals,
we find $||\xi_{1}|-|\xi_{2}||\gtrsim N$ for $\xi_{i}\in I_{i}$.
Now suppose that $I_{1}$ and $I_{2}$ are neighboring intervals.
Due to otherwise impossible frequency interaction, we find $||\xi_{2}|-|\xi_{3}||\gtrsim N$
for $\xi_{2}\in I_{2}$ and $\xi_{3}\in I_{3}$. The case when $I_{1}$
and $-I_{2}$ are neighboring intervals is impossible: it would imply
$|\xi_{1}+\xi_{2}|\leq2cN$, contradicting $|\xi_{3}+\xi_{4}|\gtrsim N$,
if $c$ was chosen sufficiently small.

Having settled the claim, we finish the proof. Say $(i,j)=(1,2)$
is a pair of the indices as in the claim. We apply a bilinear Strichartz
estimates (Proposition \ref{prop:ShorttimeBilinearBenjaminOnoStrichartzEstimate})
and the first estimate to find 
\begin{align*}
 & \quad\bigg|\int_{0}^{N_{1}^{-1}}\int_{\T}P_{I_{1}}u_{1}P_{I_{2}}u_{2}P_{I_{3}}u_{3}P_{N_{4}}u_{4}dxds\bigg|\\
 & \leq\Vert P_{I_{1}}u_{1}P_{I_{2}}u_{2}\Vert_{L_{t}^{2}([0,N_{1}^{-1}],L_{\lambda}^{2})}\Vert P_{I_{3}}u_{3}P_{N_{4}}u_{4}\Vert_{L_{t}^{2}([0,N_{1}^{-1}],L_{\lambda}^{2})}\\
 & \lesssim N_{1}^{-1/2}\Vert P_{I_{1}}f_{1}\Vert_{L_{\lambda}^{2}}\Vert P_{I_{2}}f_{2}\Vert_{L_{\lambda}^{2}}N_{1}^{-1/2}N_{4}^{1/2,0}\Vert P_{I_{3}}f_{3}\Vert_{L_{\lambda}^{2}}\|P_{N_{4}}f_{4}\|_{L_{\lambda}^{2}}\\
 & \lesssim N_{1}^{-1}N_{4}^{1/2,0}\prod_{i=1}^{4}\Vert P_{N_{i}}f_{i}\Vert_{L_{\lambda}^{2}}.
\end{align*}
This completes the proof. 
\end{proof}
We will not treat the case $N_{1}\lesssim1$ with short-time bilinear
Strichartz estimates because H\"older's and Bernstein's inequality
suffice.

We record the corresponding estimates for $U_{BO}^{p}$-/$V_{BO}^{p}$-functions. 
\begin{prop}
Let $N_{1}\gtrsim1$, $N_{1}\gg N_{2}$ and $|I|=N_{1}^{-1}$. Then,
we find the following estimates to hold: 
\begin{itemize}
\item (Short-time $U_{BO}^{2}$-estimate) 
\begin{equation}
\begin{split} & \quad\Vert P_{N_{1}}u_{1}P_{N_{2}}u_{2}\Vert_{L_{t}^{2}(I,L_{\lambda}^{2})}\\
 & \lesssim N_{1}^{-1/2}N_{2}^{1/2,0}\Vert P_{N_{1}}u_{1}\Vert_{U_{BO}^{2}(I)_{\lambda}}\Vert P_{N_{2}}u_{2}\Vert_{U_{BO}^{2}(I)_{\lambda}}.
\end{split}
\label{eq:ShorttimeU2Estimate}
\end{equation}
\item (Short-time $V_{BO}^{2}$-estimate) 
\begin{equation}
\begin{split} & \quad\Vert P_{N_{1}}u_{1}P_{N_{2}}u_{2}\Vert_{L_{t}^{2}(I,L_{\lambda}^{2})}\\
 & \lesssim\log\langle\frac{N_{1}}{N_{2}}\rangle N_{1}^{-1/2}N_{2}^{1/2,0}\Vert P_{N_{1}}u_{1}\Vert_{V_{BO}^{2}(I)_{\lambda}}\Vert P_{N_{2}}u_{2}\Vert_{V_{BO}^{2}(I)_{\lambda}}.
\end{split}
\label{eq:ShorttimeV2Estimate}
\end{equation}
\item (Linear $L_{t,x}^{6}$ and $L_{t}^{8}L_{x}^{4}$ estimates) 
\begin{equation}
\Vert P_{N_{2}}u\Vert_{L_{t}^{6}(I,L_{\lambda}^{6})}+\Vert P_{N_{2}}u\Vert_{L_{t}^{8}(I,L_{\lambda}^{4})}\lesssim\Vert P_{N_{2}}u\Vert_{V_{BO}^{2}(I)_{\lambda}}.\label{eq:L6StrichartzEstimateV2}
\end{equation}
\end{itemize}
\end{prop}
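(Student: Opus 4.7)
\medskip

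\emph{Proof proposal.} All three estimates should follow from standard $U^p$-/$V^p$-transference applied to the linear and bilinear Strichartz estimates of Propositions \ref{prop:ShorttimeLinearStrichartz} and \ref{prop:ShorttimeBilinearBenjaminOnoStrichartzEstimate}. The plan is to handle the $U^2$-estimate first by atomic decomposition, then to obtain the $V^2$-estimate by trading an interpolation with a $U^p$-embedding (which accounts for the logarithmic loss), and finally to get the linear $L^6_{t,x}$ and $L^8_t L^4_x$ bounds directly from the embedding $V^2 \hookrightarrow U^q$ for $q>2$.

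For \eqref{eq:ShorttimeU2Estimate}, I would argue as follows. By definition of $U^2_{BO}$, it suffices to show the bilinear bound on $U^2$-atoms: write $e^{-t\mathcal{H}\partial_{xx}} P_{N_i} u_i = \sum_k \chi_{I_k^{(i)}}(t) \phi_k^{(i)}$ with $\sum_k \|\phi_k^{(i)}\|_{L^2_\lambda}^2 \lesssim \|P_{N_i} u_i\|_{U^2_{BO}(I)_\lambda}^2$, where $\phi_k^{(i)}$ is frequency-localized to $N_i$. Refining the partitions to a common one $\{J_\ell\}$ and expanding the product, the left-hand side becomes
\[
\Big\| \sum_\ell \chi_{J_\ell \cap I}\,(e^{-t\mathcal{H}\partial_{xx}}\phi_\ell^{(1)})(e^{-t\mathcal{H}\partial_{xx}}\phi_\ell^{(2)}) \Big\|_{L^2_t L^2_\lambda}.
\]
Since the $J_\ell \cap I$ are pairwise disjoint and each $J_\ell \cap I \subseteq [0,N_1^{-1}]$ is contained in a short-time window, applying the linear bilinear Strichartz estimate (Corollary \ref{cor:HighLowShorttimeEstimates}, high-low case) on each piece and using Cauchy--Schwarz in $\ell$ gives the desired bound.

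For \eqref{eq:ShorttimeV2Estimate}, the plan is to combine the $U^2$-estimate just proved with the embedding $V^2 \hookrightarrow U^p_{BO}$ for $p>2$ (a standard fact with norm growing like $(p-2)^{-1/2}$) by interpolating with a trivial bound obtained from Bernstein and $V^2 \hookrightarrow L^\infty_t L^2_x$:
\[
\Vert P_{N_1}u_1 P_{N_2}u_2\Vert_{L^2_t L^2_\lambda(I)} \lesssim N_1^{-1/2}N_2^{1/2,0} \|P_{N_1}u_1\|_{V^2_{BO}}\|P_{N_2}u_2\|_{V^2_{BO}},
\]
with a constant growing in the interpolation exponent. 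Choosing the exponent optimally in terms of $N_1/N_2$ produces the logarithmic factor $\log\langle N_1/N_2\rangle$. Equivalently, one can do a dyadic decomposition into pieces of bounded $V^2$-norm on dyadic modulation scales and sum, which is the point where the logarithm naturally appears.

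For \eqref{eq:L6StrichartzEstimateV2}, one checks that both pairs $(q,p) = (6,6)$ and $(8,4)$ satisfy $\tfrac{2}{q}+\tfrac{1}{p}=\tfrac{1}{2}$, so Proposition \ref{prop:ShorttimeLinearStrichartz} applies on $[0,\lambda N_2^{-1}] \supseteq I$; atomic decomposition then yields the estimate on $U^q_{BO}$. Because $q>2$ in both cases, the embedding $V^2_{BO} \hookrightarrow U^q_{BO}$ transfers the bound to $V^2_{BO}$ with no logarithmic loss, finishing the proof. I expect the main (only nontrivial) obstacle to be the precise bookkeeping in the interpolation step leading to \eqref{eq:ShorttimeV2Estimate}: one has to make sure the $p$-dependence of the $V^2 \hookrightarrow U^p$ constant is tracked quantitatively so that the optimization truly produces a $\log\langle N_1/N_2\rangle$ factor rather than a power.
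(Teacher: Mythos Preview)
Your treatment of \eqref{eq:ShorttimeU2Estimate} and \eqref{eq:L6StrichartzEstimateV2} is correct and coincides with the paper's: atomic decomposition for the $U^2$ bilinear bound, and the chain $V^2_{BO}\hookrightarrow U^q_{BO}\to L^q_tL^p_x$ (with $q=6,8$) for the linear Strichartz estimates.

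For \eqref{eq:ShorttimeV2Estimate} your overall plan---interpolate the sharp $U^2$ bilinear bound against a second, cruder bilinear bound to land in $V^2$ with a logarithmic loss---is exactly the paper's strategy, carried out via \cite[Proposition~2.20]{HadacHerrKoch2009}. The gap in your write-up is the identification of that second endpoint. The ``trivial bound'' you display, obtained from $V^2\hookrightarrow L^\infty_tL^2_x$ and Bernstein, is already a $V^2\times V^2$ estimate (with prefactor $N_1^{-1/2}N_2^{1/2}$, not $N_1^{-1/2}N_2^{1/2,0}$); it is not a $U^q$ endpoint and so cannot be fed into the interpolation machinery that actually produces the logarithm. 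Likewise, the embedding $V^2\hookrightarrow U^p$ by itself does not help: the bilinear free-solution estimate transfers to $U^2\times U^2$, not to $U^p\times U^p$ for $p>2$, so you do not automatically have a $p$-family of bilinear bounds to optimize over. What is needed is an \emph{independent} $U^q$ bilinear estimate coming from a different source. The paper supplies this via H\"older and the linear $L^8_tL^4_x$ Strichartz estimate,
\[
\Vert P_{N_1}u_1\,P_{N_2}u_2\Vert_{L^2_tL^2_\lambda(I)}\le\prod_{i=1}^2\Vert P_{N_i}u_i\Vert_{L^4_tL^4_\lambda(I)}\lesssim N_1^{-1/4}\prod_{i=1}^2\Vert P_{N_i}u_i\Vert_{U^8_{BO}(I)_\lambda},
\]
and then interpolates this $U^8$ bound against \eqref{eq:ShorttimeU2Estimate}. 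Your alternative phrasing (dyadic decomposition in modulation) is essentially the proof of that interpolation lemma, so it works too---but only once a genuine $U^q$ endpoint such as the one above is supplied.
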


\begin{proof}
\eqref{eq:ShorttimeU2Estimate} is an immediate consequence of the
atomic representation of $U^{p}$-\-func\-tions; see \cite[Section 2]{HadacHerrKoch2009}
for details. In a similar spirit, \eqref{eq:L6StrichartzEstimateV2}
follows from $\Vert P_{N_{2}}u\Vert_{L_{t}^{6}(I,L_{\lambda}^{6})}\lesssim\Vert P_{N_{2}}u\Vert_{U_{BO}^{6}(I)_{\lambda}}\lesssim\Vert P_{N_{2}}u\Vert_{V_{BO}^{2}(I)_{\lambda}}$
and likewise for the $L_{t}^{8}L_{x}^{4}$-norm. For the proof of
\eqref{eq:ShorttimeV2Estimate} we note that 
\[
\begin{split}\Vert P_{N_{1}}u_{1}P_{N_{2}}u_{2}\Vert_{L_{t}^{2}(I,L_{\lambda}^{2})} & \leq\Vert P_{N_{1}}u_{1}\Vert_{L_{t}^{4}(I,L_{\lambda}^{4})}\Vert P_{N_{2}}u_{2}\Vert_{L_{t}^{4}(I,L_{\lambda}^{4})}\\
 & \lesssim N_{1}^{-1/4}\Vert P_{N_{1}}u_{1}\Vert_{L_{t}^{8}(I,L_{\lambda}^{4})}\Vert P_{N_{2}}u_{2}\Vert_{L_{t}^{8}(I,L_{\lambda}^{4})}\\
 & \lesssim N_{1}^{-1/4}\Vert P_{N_{1}}u_{1}\Vert_{U_{BO}^{8}(I)_{\lambda}}\Vert P_{N_{2}}u_{2}\Vert_{U_{BO}^{8}(I)_{\lambda}}\\
 & \lesssim N_{1}^{-1/4}\Vert P_{N_{1}}u_{1}\Vert_{U_{BO}^{8}(I)_{\lambda}}\Vert P_{N_{2}}u_{2}\Vert_{U_{BO}^{8}(I)_{\lambda}},
\end{split}
\]
by H\"older's inequality, Proposition \ref{prop:ShorttimeBilinearBenjaminOnoStrichartzEstimate},
and the transfer principle. Interpolating (cf. \cite[Proposition~2.20,~p.~930]{HadacHerrKoch2009})
with \eqref{eq:ShorttimeU2Estimate} yields \eqref{eq:ShorttimeV2Estimate}. 
\end{proof}

\section{\label{sec:ProofMainResults}Proof of main results}

In this section we show how to conclude Theorems \ref{thm:APrioriGBOI}
- \ref{thm:LWPGBO} with short-time nonlinear and energy estimates
at hand. Many of the below arguments are standard in the literature,
where short-time Fourier restriction is used (cf. \cite{IonescuKenigTataru2008}).

We note that existence of solutions for (rough) $H^{s}(\T)$-data
follows from the smoothing effect in the energy estimate for smooth
solutions and a compactness argument. We refer to the literature \cite{GuoOh2018}
for details, and focus on a priori estimates for smooth solutions
subsequently.

\emph{Local-in-time a priori estimates and existence for small initial
data in} $H^{s}(\T)$, $s>1/2$:

Let $u\in C_{t,x}^{\infty}([0,T_{+}),\T)$ be a smooth solution to
\eqref{eq:GeneralizedBenjaminOnoEquation} with maximal forward-in-time
lifespan $[0,T_{+})$ and $\|u_{0}\|_{H^{s}}\leq\epsilon$ for sufficiently
small $\epsilon>0$. Note that the existence and uniqueness of smooth
solutions are ensured by the classical energy method (cf. \cite{AbdelouhabBonaFellandSaut1989}).
Gathering the linear short-time energy estimate (Lemma \ref{lem:UV-linear-estimate}),
the nonlinear short-time estimate (Proposition \ref{prop:ShorttimeNonlinearEstimate}),
and the energy estimate for solutions (Proposition \ref{prop:EnergyEstimateSolutions})
for $\lambda=1$, we find the set of estimates \eqref{eq:a-priori-intro}
for $T\in(0,1]\cap(0,T_{+})$.

Set $X(T)=\|\partial_{x}(u^{k})\|_{N^{s}(T)}+\|u\|_{E^{s}(T)}$ for
$T\in(0,1]\cap(0,T_{+})$. Thus \eqref{eq:a-priori-intro} reads 
\[
X(T)\lesssim\Vert u_{0}\Vert_{H^{s}}+X(T)^{\frac{k+1}{2}}+X(T)^{k}.
\]
Since the function $T\mapsto X(T)$ is continuous, non-decreasing,
and satisfies $\lim_{T\to0}X(T)\lesssim\Vert u_{0}\Vert_{H^{s}}$
(cf. \cite[Section~1]{KochTataru2007}), a continuity argument gives
\[
X(T)\lesssim\Vert u_{0}\Vert_{H^{s}}\lesssim\epsilon
\]
uniformly for $T\in(0,1]\cap(0,T_{+})$, provided that $\epsilon$
is sufficiently small.

Note the following persistence property: For $1/2<s\leq s^{\prime}$
we find the following estimates in addition: (see for instance Proposition
\ref{prop:EnergyEstimateSolutions})
\begin{equation}
\left\{ \begin{array}{cl}
\Vert u\Vert_{F^{s^{\prime}}(T)} & \lesssim\Vert u\Vert_{E^{s^{\prime}}(T)}+\Vert\partial_{x}(u^{k})\Vert_{N^{s^{\prime}}(T)},\\
\Vert\partial_{x}(u^{k})\Vert_{N^{s^{\prime}}(T)} & \lesssim\Vert u\Vert_{F^{s^{\prime}}(T)}\Vert u\Vert_{F^{s}(T)}^{k-1},\\
\Vert u\Vert_{E^{s^{\prime}}(T)}^{2} & \lesssim\Vert u_{0}\Vert_{H^{s^{\prime}}}^{2}+\Vert u\Vert_{F^{s^{\prime}}(T)}^{2}\Vert u\Vert_{F^{s}(T)}^{k-1}+\Vert u\Vert_{F^{s^{\prime}}(T)}^{2}\Vert u\Vert_{F^{s}(T)}^{2k-2}.
\end{array}\right.\label{eq:a-priori-persistence}
\end{equation}
This gives 
\[
\Vert u\Vert_{F^{s^{\prime}}(T)}\lesssim\Vert u_{0}\Vert_{H^{s^{\prime}}}+\Vert u\Vert_{F^{s^{\prime}}(T)}\Vert u\Vert_{F^{s}(T)}^{\frac{k-1}{2}}+\Vert u\Vert_{F^{s^{\prime}}(T)}\Vert u\Vert_{F^{s}(T)}^{k-1}.
\]
Since we know that $\Vert u\Vert_{F^{s}(T)}\lesssim X(T)\lesssim\varepsilon$
is small, we find 
\[
\Vert u\Vert_{F^{s^{\prime}}(T)}\lesssim\Vert u_{0}\Vert_{H^{s^{\prime}}},
\]
uniformly for $T\in(0,1]\cap(0,T_{+})$. Recall the blow-up alternative
(cf. \cite{AbdelouhabBonaFellandSaut1989}): either $\lim_{t\uparrow T_{+}}\Vert u(t)\Vert_{H^{3}}=\infty$,
or $u$ can be extended beyond $T_{+}$. This points out that $T_{+}>1$
so we can take $T=1$ to get $\|u\|_{F^{s}(1)}\lesssim\|u_{0}\|_{H^{s}}$.

\emph{Local-in-time a priori estimates and existence of solutions
for arbitrary initial data in } $H^{s}(\T)$, $s>1/2$:

To extend the previous claims to large data $u_{0}\in H^{s}(\T)$,
we rescale: $u_{0,\lambda}(x) \rightarrow \lambda^{-\frac{1}{k-1}}u_{0}(\lambda^{-1}x)$.
Let $s_{c}<s_{1}<1/2$ and set $\overline{s}=(s_{1},s)$. For sufficiently
small $\epsilon>0$, we choose $\lambda=\lambda(\|u_{0}\|_{H^{s}},\epsilon)\geq1$
such that $\Vert u_{0,\lambda}\Vert_{H_{\lambda}^{\overline{s}}}\leq\varepsilon$
by \eqref{eq:scaling}. For the corresponding solutions $u_{\lambda}$
on $\lambda\T$, we find 
\begin{equation}
\left\{ \begin{array}{cl}
\Vert u_{\lambda}\Vert_{F_{\lambda}^{\overline{s}}(T)} & \lesssim\Vert u_{\lambda}\Vert_{E_{\lambda}^{\overline{s}}(T)}+\Vert\partial_{x}(u_{\lambda}^{k})\Vert_{N_{\lambda}^{\overline{s}}(T)},\\
\Vert\partial_{x}(u_{\lambda}^{k})\Vert_{N_{\lambda}^{\overline{s}}(T)} & \lesssim\Vert u_{\lambda}\Vert_{F_{\lambda}^{\overline{s}}(T)}^{k},\\
\Vert u\Vert_{E_{\lambda}^{\overline{s}}(T)}^{2} & \lesssim\Vert u_{0}\Vert_{H_{\lambda}^{\overline{s}}}^{2}+\Vert u\Vert_{F_{\lambda}^{\overline{s}}(T)}^{k+1}+\Vert u\Vert_{F_{\lambda}^{\overline{s}}(T)}^{2k}.
\end{array}\right.\label{eq:a-priori-rescaled}
\end{equation}
We recall that the implicit constants can be chosen independently
of $\lambda$. The reason is that the underlying Strichartz estimates
and pointwise estimates are independent of $\lambda$. This yields
like above a priori estimates for $u_{\lambda}$ on $[0,1]$ 
\[
\sup_{t\in[0,1]}\|u_{\lambda}(t)\|_{H_{\lambda}^{\overline{s}}}\lesssim\|u_{0,\lambda}\|_{H_{\lambda}^{\overline{s}}}
\]
with persistence of regularity. Scaling back using \eqref{eq:scaling}
gives
\[
\sup_{t\in[0,\lambda^{-2}]}\Vert u(t)\Vert_{H^{s}}\lesssim\lambda^{s-s_{1}}\Vert u_{0}\Vert_{H^{s}}.
\]
This proves Theorem \ref{thm:APrioriGBOI}.

\emph{Improved local-in-time a priori estimates and existence of solutions
in the quartic case}:

Here, it suffices to show that we can choose $T$ depending only on
$\|u_{0}\|_{H^{1/2}}$. Let $s_{c}<s_{1}<1/2$ and set $\overline{s}=(s_{1},1/2)$.
Choose $\lambda=\lambda(\|u_{0}\|_{H^{1/2}},\epsilon)\geq1$ such
that $\|u_{0,\lambda}\|_{H_{\lambda}^{\overline{s}}}\leq\epsilon$.
For the quartic case, we have \eqref{eq:a-priori-rescaled} uniformly
in $\lambda$, even for $\overline{s}=(s_{1},1/2)$. Together with
persistence of regularity and the blow-up alternative, we have a priori
estimates for $u_{\lambda}$ on $[0,1]$,
\[
\sup_{t\in[0,1]}\|u_{\lambda}(t)\|_{H_{\lambda}^{s_{1},s}}\lesssim\|u_{0,\lambda}\|_{H_{\lambda}^{s_{1},s}}
\]
for any $s>1/2$. Scaling back, we have 
\[
\sup_{t\in[0,\lambda^{-2}]}\Vert u(t)\Vert_{H^{s}}\lesssim\lambda^{s-s_{1}}\Vert u_{0}\Vert_{H^{s}}.
\]
Since $\lambda$ only depends on $\|u_{0}\|_{H^{1/2}}$, the desired
local-in-time a priori estimate is proved.

\emph{Global existence for $H^{s}(\T)$-solutions, $s>1/2$, with
small $H^{1/2}(\T)$ initial data, in the quartic case}:

Again, we focus on a priori estimates for smooth solutions $u\in C_{t,x}^{\infty}([0,T_{+}),\T)$
with $\|u_{0}\|_{H^{1/2}}\leq\epsilon$ for sufficiently small $\epsilon>0$.
Recall that the energy and mass 
\[
E[u]=\int_{\T}\frac{u\mathcal{H}\partial_{x}u}{2}dx+\int_{\T}\frac{u^{5}}{5}dx,\quad M[u]=\int_{\T}u^{2}dx
\]
are conserved quantities. The potential energy is small relative to
the sum of kinetic energy and mass: 
\[
\Big|\int_{\T}\frac{u^{5}}{5}dx\Big|\lesssim\|u\|_{H^{1/2}}^{5}\ll\|u\|_{H^{1/2}}^{2}\sim\int_{\T}\frac{u\mathcal{H}\partial_{x}u}{2}dx+\int u^{2}dx,
\]
provided that $\|u\|_{H^{1/2}}$ is small. Therefore, the conservation
of mass and energy yields $\sup_{t\in[0,T_{+})}\|u(t)\|_{H^{1/2}}\lesssim\epsilon$,
provided that $\epsilon$ is sufficiently small. Together with persistence
of regularity and the blow-up alternative, we find $T_{+}>1$ and
$\sup_{t\in[0,1]}\Vert u(t)\Vert_{H^{s'}}\lesssim\Vert u_{0}\Vert_{H^{s'}}$
for any $s'>1/2$. However, the $H^{1/2}$-norm of $u(t)$ is uniformly
bounded, the argument can be iterated. Thus $T_{+}=\infty$ and iterating
$\Vert u(n+1)\Vert_{H^{s'}}\lesssim\Vert u(n)\Vert_{H^{s'}}$ gives
\[
\Vert u(t)\Vert_{H^{s'}}\lesssim e^{Ct}\Vert u_{0}\Vert_{H^{s'}}
\]
for any $s'>1/2$. This yields global existence in the quartic case.
This concludes the proof of Theorem \ref{thm:APrioriGBOII}.

\vspace{0.5cm}
We turn to the proof of Theorem \ref{thm:LWPGBO}. We restrict to
the lowest regularity $s=3/4$.


\emph{Lipschitz continuous dependence in} $H^{-1/4}$:

For small initial data in $H^{3/4}$, say $\Vert u_{i}(0)\Vert_{H^{3/4}}\leq\varepsilon\ll1$
for $i=1,2$, we can proceed as follows. Let $v=u_{1}-u_{2}$ denote
the difference of solutions $u_{i}$ to \eqref{eq:GeneralizedBenjaminOnoEquation}
in $F^{3/4}(1)$. $v$ is governed by (cf. \eqref{eq:first-expression})
\[
\partial_{t}v+\mathcal{H}\partial_{xx}v=\partial_{x}(v(u_{1}^{k-1}+u_{1}^{k-2}u_{2}+\ldots+u_{2}^{k-1})).
\]
Lemma \ref{lem:UV-linear-estimate}, Proposition \ref{prop:ShorttimeNonlinearEstimate},
and Proposition \ref{prop:EnergyEstimatesDifferences} yield the set
of estimates \eqref{eq:weak-Lipschitz-intro}. Due to smallness of
$\|u_{i}\|_{F^{3/4}(1)}\lesssim\epsilon$ (by the a priori estimates
for solutions), the set of estimates \eqref{eq:weak-Lipschitz-intro}
yields 
\[
\Vert v\Vert_{F^{-1/4}(1)}\lesssim\Vert v(0)\Vert_{H^{-1/4}}.
\]

The previous argument can be extended to the large data case paralleling
the arguments for the a priori estimates. Let $s_{c}<s_{1}<1/2$ and
set $\overline{s}=(s_{1},3/4)$. Choose $\lambda$ sufficiently large
such that $\|u_{i,\lambda}(0)\|_{H_{\lambda}^{\overline{s}}}\leq\epsilon\ll1$.
We find the following set of estimates independent of $\lambda$:\footnote{$T=1$ is omitted in the estimate for brevity.}
\[
\left\{ \begin{array}{cl}
\Vert v_{\lambda}\Vert_{F_{\lambda}^{\overline{s}-1}} & \lesssim\Vert v_{\lambda}\Vert_{E_{\lambda}^{\overline{s}-1}}+\Vert\partial_{x}(v_{\lambda}u_{\lambda}^{k-1})\Vert_{N_{\lambda}^{\overline{s}-1}}\\
\Vert\partial_{x}(v_{\lambda}u_{\lambda}^{k-1})\Vert_{N_{\lambda}^{\overline{s}-1}} & \lesssim\Vert v_{\lambda}\Vert_{F_{\lambda}^{\overline{s}-1}}(\Vert u_{1,\lambda}\Vert_{F_{\lambda}^{\overline{s}}}+\Vert u_{2,\lambda}\Vert_{F_{\lambda}^{\overline{s}}})^{k-1},\\
\Vert v_{\lambda}\Vert_{E_{\lambda}^{\overline{s}-1}}^{2} & \lesssim\Vert v_{\lambda}(0)\Vert_{H_{\lambda}^{\overline{s}-1}}^{2}+\Vert v_{\lambda}\Vert_{F_{\lambda}^{\overline{s}-1}}^{2}(\Vert u_{1,\lambda}\Vert_{F_{\lambda}^{\overline{s}}}+\Vert u_{2,\lambda}\Vert_{F_{\lambda}^{\overline{s}}})^{2k-2}.
\end{array}\right.
\]
By the a priori estimates for solutions, $\|u_{i,\lambda}\|_{F_{\lambda}^{\overline{s}}(1)}\lesssim\epsilon$.
This smallness yields 
\[
\sup_{t\in[0,1]}\Vert v_{\lambda}(t)\Vert_{H_{\lambda}^{\overline{s}-1}}\lesssim\|v_{\lambda}\|_{F_{\lambda}^{\overline{s}-1}(1)}\lesssim\Vert v_{\lambda}(0)\Vert_{H_{\lambda}^{\overline{s}-1}}.
\]
Scaling back yields 
\[
\sup_{t\in[0,\lambda^{-2}]}\Vert v(t)\Vert_{H^{-1/4}}\lesssim\lambda^{s_{1}+\frac{1}{4}}\Vert v(0)\Vert_{H^{-1/4}}.
\]
Since $\lambda$ only depends on the $H^{3/4}$-norms of initial data,
this yields Lipschitz continuous dependence in $H^{-1/4}$.

\emph{Continuous dependence in }$H^{3/4}$:

Lastly, we prove continuous dependence with the Bona--Smith approximation.
Let $s_{c}<s_{1}<1/2$ and set $\overline{s}=(s_{1},3/4)$. Lemma
\ref{lem:UV-linear-estimate}, Proposition \ref{prop:ShorttimeNonlinearEstimate},
and Proposition \ref{prop:EnergyEstimatesDifferences} read 
\[
\left\{ \begin{array}{cl}
\Vert v\Vert_{F_{\lambda}^{\overline{s}}} & \lesssim\Vert\partial_{x}(vu^{k-1})\Vert_{N_{\lambda}^{\overline{s}}}+\Vert v\Vert_{E_{\lambda}^{\overline{s}}},\\
\Vert\partial_{x}(vu^{k-1})\Vert_{N_{\lambda}^{\overline{s}}} & \lesssim\Vert v\Vert_{F_{\lambda}^{\overline{s}}}(\Vert u_{1}\Vert_{F_{\lambda}^{\overline{s}}}+\Vert u_{2}\Vert_{F_{\lambda}^{\overline{s}}})^{k-1},\\
\Vert v\Vert_{E_{\lambda}^{\overline{s}}}^{2} & \lesssim\Vert v(0)\Vert_{H_{\lambda}^{\overline{s}}}^{2}+\Vert v\Vert_{F_{\lambda}^{\overline{s}}}^{2}(\Vert v\Vert_{F_{\lambda}^{\overline{s}}}+\Vert u_{2}\Vert_{F_{\lambda}^{\overline{s}}})^{k-1}\\
 & \quad+\Vert v\Vert_{F_{\lambda}^{\overline{s}}}\Vert v\Vert_{F_{\lambda}^{\overline{s}-1}}\Vert u_{2}\Vert_{F_{\lambda}^{\overline{s}+1}}(\Vert v\Vert_{F_{\lambda}^{\overline{s}}}+\Vert u_{2}\Vert_{F_{\lambda}^{\overline{s}}})^{k-2}\\
 & \quad+\Vert v\Vert_{F_{\lambda}^{\overline{s}}}^{2}(\Vert v\Vert_{F_{\lambda}^{\overline{s}}}+\Vert u_{2}\Vert_{F_{\lambda}^{\overline{s}}})^{2k-2}\\
 & \quad+\Vert v\Vert_{F_{\lambda}^{\overline{s}}}\Vert v\Vert_{F_{\lambda}^{\overline{s}-1}}\Vert u_{2}\Vert_{F_{\lambda}^{\overline{s}+1}}(\Vert v\Vert_{F_{\lambda}^{\overline{s}}}+\Vert u_{2}\Vert_{F_{\lambda}^{\overline{s}}})^{2k-3}.
\end{array}\right.
\]

Consider a $H_{\lambda}^{\overline{s}}$-Cauchy sequence of smooth
initial data $u_{0}^{n}\in C^{\infty}(\lambda\T)$ with $\|u_{0}^{n}\|_{H_{\lambda}^{\overline{s}}}<\epsilon$.
Note that this smallness can be assumed thanks to the scaling argument.
Due to the smallness of $H_{\lambda}^{\overline{s}}$-norm, we can
define $S_{1}^{\infty}(u_{0}^{n})$ by the classical solution to \eqref{eq:GeneralizedBenjaminOnoEquation}
with initial data $u_{0}^{n}$, on the time interval $[0,1]$. By
the density argument, it suffices to show that $S_{1}^{\infty}(u_{0}^{n})$
is a Cauchy sequence in $C([0,1],H_{\lambda}^{\overline{s}})$.

To show this, we start from writing 
\begin{equation}
\begin{split}S_{1}^{\infty}(u_{0}^{n})-S_{1}^{\infty}(u_{0}^{m}) & =(S_{1}^{\infty}(u_{0}^{n})-S_{1}^{\infty}(u_{0,\leq N}^{n}))+(S_{1}^{\infty}(u_{0,\leq N}^{n})-S_{1}^{\infty}(u_{0,\leq N}^{m}))\\
 & +(S_{1}^{\infty}(u_{0,\leq N}^{m})-S_{1}^{\infty}(u_{0}^{m})),
\end{split}
\label{eq:DecompositionDifference}
\end{equation}
where $f_{\leq N}$ denotes $P_{\leq N}f$ and $N>1$ will be chosen
large. For the first (and third) term of \eqref{eq:DecompositionDifference},
temporarily denoting $v=S_{1}^{\infty}(u_{0}^{n})-S_{1}^{\infty}(u_{0,\leq N}^{n})$,
we use the above set of estimates to get 
\[
\Vert v\Vert_{F_{\lambda}^{\overline{s}}(1)}^{2}\lesssim\Vert u_{0,>N}^{n}\Vert_{H_{\lambda}^{\overline{s}}}^{2}+\Vert v\Vert_{F_{\lambda}^{\overline{s}}(1)}^{2}\epsilon^{k-1}+\|v\|_{F_{\lambda}^{\overline{s}}(1)}\|v\|_{F_{\lambda}^{\overline{s}-1}(1)}N\epsilon^{k-1},
\]
where the factor $N$ comes from the persistence estimate 
\[
\|S_{1}^{\infty}(u_{0,\leq N}^{n})\|_{F_{\lambda}^{\overline{s}+1}(1)}\lesssim\|u_{0,\leq N}^{n}\|_{H_{\lambda}^{\overline{s}+1}}\lesssim N\|u_{0,\leq N}^{n}\|_{H_{\lambda}^{\overline{s}}}\lesssim N\epsilon.
\]
The above loss of $N$ is balanced by the Lipschitz dependence in
$H_{\lambda}^{\overline{s}-1}$: 
\[
\|v\|_{F_{\lambda}^{\overline{s}-1}(1)}\lesssim\|u_{0,>N}^{n}\|_{H_{\lambda}^{\overline{s}-1}}\lesssim N^{-1}\|u_{0,>N}^{n}\|_{H_{\lambda}^{s}}.
\]
Therefore, smallness of $\epsilon$ implies 
\[
\|S_{1}^{\infty}(u_{0}^{n})-S_{1}^{\infty}(u_{0,\leq N}^{n})\|_{F_{\lambda}^{\overline{s}}(1)}=\Vert v\Vert_{F_{\lambda}^{\overline{s}}(1)}\lesssim\Vert u_{0,>N}^{n}\Vert_{H_{\lambda}^{\overline{s}}},
\]
uniformly in $N$ and $n$. As $u_{0}^{n}$ is a Cauchy sequence in
$H_{\lambda}^{\overline{s}}$, we have 
\[
\sup_{n}\Vert S_{1}^{\infty}(u_{0,\leq N}^{n})-S_{1}^{\infty}(u_{0}^{n})\Vert_{C([0,1],H_{\lambda}^{\overline{s}})}\to0\text{ as }N\to\infty.
\]
The second term in \eqref{eq:DecompositionDifference} is bounded
by the standard energy estimate\footnote{In the following estimate, $C=C(k)$ varies line by line.}
\[
\begin{split}\Vert S_{1}^{\infty}(u_{0,\leq N}^{n})-S_{1}^{\infty}(u_{0,\leq N}^{m})\Vert_{C([0,1],H_{\lambda}^{\bar{s}})} & \lesssim\Vert S_{1}^{\infty}(u_{0,\leq N}^{n})-S_{1}^{\infty}(u_{0,\leq N}^{m})\Vert_{C([0,1],H_{\lambda}^{1})}\\
 & \lesssim N^{C}\Vert P_{\leq N}(u_{0}^{n}-u_{0}^{m})\Vert_{H_{\lambda}^{1}}\\
 & \lesssim N^{C}\lambda^{s_{1}}\Vert u_{0}^{n}-u_{0}^{m}\Vert_{H_{\lambda}^{\overline{s}}}.
\end{split}
\]
Thus, for any $N$, this term converges to zero. This concludes that
$S_{1}^{\infty}(u_{0}^{n})$ is a Cauchy sequence in $C([0,1],H_{\lambda}^{\overline{s}})$.
The proof of Theorem \ref{thm:LWPGBO} is completed.

\section{\label{sec:NonlinearEstimates}Nonlinear estimates}

The main purpose of this section is to propagate the nonlinearity
in short-time function spaces:
\begin{prop}[Nonlinear estimates]
\label{prop:ShorttimeNonlinearEstimate}Let $T\in(0,1]$. Then, there
is $s_{k}<1/2$ such that we find the following estimate to hold:
\begin{equation}
\Vert\partial_{x}(\prod_{i=1}^{k}u_{i})\Vert_{N_{\lambda}^{\overline{s}}(T)}\lesssim\prod_{i=1}^{k}\Vert u_{i}\Vert_{F_{\lambda}^{\overline{s}}(T)},\label{eq:ShorttimeNonlinearEstimate}
\end{equation}
provided that $0<s_{1}<1/2$ and $s_{2}>s_{k}$, and we denoted $\overline{s}=(s_{1},s_{2})$.
\end{prop}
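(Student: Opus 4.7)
The plan is to follow the standard short-time Fourier restriction multilinear scheme, extending the trilinear analysis of the $k=3$ case in \cite{Schippa2017MBO} to $k \geq 4$ factors by additional combinatorial bookkeeping. First, by the definition of the $N^{\overline{s}}_\lambda(T)$-norm together with the duality between $DU^2_{BO}$ and $V^2_{BO}$, it suffices to bound, for each output dyadic scale $N$ and each subinterval $I \subset [0,T]$ of length $N^{-1}$ (or $I = [0,T]$ when $N < 1$), the multilinear form
\[
N \, \sup_w \Big|\int_I \int_{\lambda\T} w \prod_{i=1}^k P_{N_i} u_i \, dx\,dt\Big|,
\]
where the supremum is taken over $w$ with frequency support near $N$ and $\|w\|_{V^2_{BO}(I)_\lambda} \leq 1$, after a Littlewood-Paley decomposition of each $u_i = \sum_{N_i} P_{N_i} u_i$. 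The factor $N$ arises from transferring $\partial_x$ onto $w$ by integration by parts.

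Next I would order $N_1 \geq N_2 \geq \cdots \geq N_k$. Nonvanishing interaction forces either $N_1 \sim N$ (the dangerous High $\times$ Low$^{k-1}$ cascade highlighted in the introduction) or $N_1 \sim N_2 \gtrsim N$ (High-High configurations, including the cancellation case $N \ll N_1$). Within each subcase I would locate a pair among $\{w, P_{N_1}u_1, \ldots, P_{N_k}u_k\}$ whose frequency supports differ in \emph{magnitude} by $\gtrsim N_1$; such a pair is immediate in the cascade regime (take $w$ and any low-frequency $P_{N_j} u_j$) and, in the resonant regime, is produced by the subinterval-splitting geometric argument used in the proof of Corollary \ref{cor:HighLowShorttimeEstimates}. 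To this pair I would apply the short-time bilinear $V^2$-Strichartz estimate \eqref{eq:ShorttimeV2Estimate}, gaining $N_1^{-1/2}$ up to a $\log$ loss. The remaining $k-2$ factors are distributed among the $L^6_{t,x}$ and $L^8_t L^4_x$ norms controlled by $V^2_{BO}$ via \eqref{eq:L6StrichartzEstimateV2}, and pointwise bounds obtained from the embedding $V^2_{BO} \hookrightarrow L^\infty_t L^2_x$ combined with Bernstein's inequality on the spatial frequency localization.

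After converting the $V^2_{BO}$ and $U^2_{BO}$ norms of frequency-localized pieces into $F^{\overline{s}}_\lambda(T)$ norms by extracting the weights $N_i^{-\overline{s}}$, one obtains a product of powers of the dyadic scales that must be summable in $N, N_1, \ldots, N_k$ with constants independent of $\lambda$. The admissible range of $s_2$ for which this summation closes defines the threshold $s_k$. The main obstacle is the fully resonant configuration $N_1 \sim \cdots \sim N_k \sim N$: the derivative loss $N$ is offset only by $N^{-1/2}$ from a single application of bilinear Strichartz, so the remaining $k-2$ factors must contribute less than $N^{(k-1)s_2 - 1/2}$ in aggregate to close. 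This forces a careful optimization between Strichartz norms (cost $N^0$) and Bernstein-based $L^\infty$ bounds (cost $N^{1/2}$); tracking the balance across all subcases determines $s_k$ and, for each $k \geq 4$, keeps it strictly below $1/2$, consistent with the paper's subsequent observation that only the Besov refinement $F^{1/2}_1$ is attained for $k \geq 5$.
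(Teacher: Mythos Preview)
Your overall framework---duality to a $V^2_{BO}$ test function, Littlewood--Paley decomposition, and case analysis by frequency hierarchy---matches the paper's. But as written the scheme does not reach $s_k<1/2$. The gap is in the configurations where \emph{three} of the top factors (say $w,u_1,u_2$) sit at the high scale $\sim N$ while $u_3$ is at a strictly smaller scale $N_3\ll N$ (Cases A.II, B.I, B.II of Lemma~\ref{lem:short-time-DU2-estimate}). A single bilinear application gains only $N^{-1/2}$; no arrangement of $L^6_{t,x}$, $L^8_tL^4_x$, or Bernstein on the remaining high factor recovers an additional $N^{-1/2}$, because linear Strichartz on the circle carries no high-frequency gain. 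One is left with $C\sim N^{1/2}N_3^{1/2,0}\prod_{i\geq 4}N_i^{1/2}$, and after inserting the $\overline{s}$-weights the residual factor $N^{1/2-s_2}$ forces $s_2\geq 1/2$. The paper's remedy is to apply bilinear Strichartz \emph{twice}: once on a high--high pair from $\{w,u_1,u_2\}$ (via the frequency-subinterval trick of Corollary~\ref{cor:HighLowShorttimeEstimates}) and once on the genuinely separated pair $(u_2,u_3)$, giving $C\sim \log\langle N\rangle\, N_3^{1/2,0}\prod_{i\geq4}N_i^{1/2}$ with no leftover power of $N$. This second bilinear is not optional for the stated threshold.

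A smaller omission: in the High$\times$High$\to$Low regime $N\ll N_1\sim N_2$ (Case B), the $DU^2_{BO}$-interval has length $N^{-1}$ but the input $U^2_{BO}$-norms are defined on intervals of length $N_1^{-1}$; you must partition $I$ into $\sim N_1 N^{0,-1}$ subintervals and sum, incurring a loss that the two bilinear gains then absorb. Your reference to Corollary~\ref{cor:HighLowShorttimeEstimates} addresses frequency subintervals, not this time-subdivision step.
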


In Section \ref{subsec:short-time-DU2-estimate}, we prove Proposition
\ref{prop:ShorttimeNonlinearEstimate}. In Section \ref{subsec:L1L2-estimate},
we estimate the nonlinearity in the smaller space $L_{t}^{1}L_{x}^{2}$;
see Lemma \ref{lem:shorttime-L1L2}. This aids in simplifying the
energy estimates in Sections \ref{sec:EnergyEstimateSolutions} and
\ref{sec:DifferenceEstimate}.

\subsection{\label{subsec:short-time-DU2-estimate}Short-time $DU_{BO}^{2}$-estimate}

In this subsection, we prove Proposition \ref{prop:ShorttimeNonlinearEstimate}.
After localizing the frequencies of the $u_{i}$ and the output frequency,
we reduce the proof of Proposition \ref{prop:ShorttimeNonlinearEstimate}
to estimates of the following kind: 
\begin{equation}
\begin{split}\Vert P_{N}\partial_{x}(P_{N_{1}}u_{1}\ldots P_{N_{k}}u_{k})\Vert_{DU_{BO}^{2}(I)_{\lambda}} & \lesssim C(N,N_{1},\ldots,N_{k})\\
 & \qquad\prod_{i=1}^{k}\sup_{\substack{I_{i}\subseteq[0,T],\\
|I_{i}|=N_{i}^{-1}
}
}\Vert P_{N_{i}}u_{i}\Vert_{U_{BO}^{2}(I_{i})_{\lambda}},
\end{split}
\label{eq:FrequencyLocalizedShorttimeNonlinearEstimate}
\end{equation}
where $I\subseteq[0,T]$, $|I|=N^{-1}$. $C(N,N_{1},\dots,N_{k})$
is a constant allowing for dyadic summation after adding frequency
weights giving \eqref{eq:ShorttimeNonlinearEstimate}.

By symmetry and otherwise impossible frequency interaction, we can
suppose that $N_{1}\geq N_{2}\geq\ldots\geq N_{k}$ and $N_{1}\sim N_{2}$.
We give an overview of the arising frequency interactions. 
\begin{itemize}
\item Firstly, suppose that $N\sim N_{1}$ with $N_{1}\gtrsim1$. 
\begin{itemize}
\item If $N_{2}\ll N_{1}$, we apply H\"older in time and a bilinear Strichartz
estimate to ameliorate the derivative loss. The remaining factors
are estimated with pointwise bounds. In the following cases, we do
not mention the application of pointwise bounds to the remaining factors. 
\item If $N_{1}\sim N_{2}\gg N_{3}$, we can apply two bilinear Strichartz
estimates after dualization. 
\item If $N_{1}\sim N_{2}\sim N_{3}\gtrsim N_{4}$, we use H\"older in
time and linear Strichartz estimates on the high frequencies to estimate
the interaction. 
\end{itemize}
\item Secondly, suppose that $N\ll N_{1}$ with $N_{1}\gtrsim1$. Due to
otherwise impossible frequency interaction, we can suppose that $N_{1}\sim N_{2}$. 
\begin{itemize}
\item If $N_{1}\sim N_{2}\gg N_{3}$, we apply two bilinear Strichartz estimates
after making use of duality.
\item If $N_{1}\sim N_{2}\sim N_{3}\gg N_{4}$, we still apply two bilinear
Strichartz estimates as observed in Corollary \ref{cor:HighLowShorttimeEstimates}. 
\item If $N_{1}\sim N_{2}\sim N_{3}\sim N_{4}$, we use linear Strichartz
estimates and H\"older's inequality to estimate the interaction. 
\end{itemize}
\item Finally, suppose that $N_{1}\ll1$. We merely use H\"older and Bernstein's
inequalities. 
\end{itemize}
\begin{lem}[Short-time $DU_{BO}^{2}$-estimate]
\label{lem:short-time-DU2-estimate} Let $T\in(0,1]$ and $N_{1}\gtrsim\ldots\gtrsim N_{k}$.
The constant $C(N,N_{1},\dots,N_{k})$ can be chosen as follows: 
\begin{itemize}
\item \emph{Case A:} $N\sim N_{1}\gtrsim1$. 
\[
C(N,N_{1},\dots,N_{k})\lesssim\begin{cases}
N_{2}^{1/2,0}\prod_{i=3}^{k}N_{i}^{1/2} & \text{if }N_{1}\gg N_{2},\\
\log\langle N\rangle N_{3}^{1/2,0}\prod_{i=4}^{k}N_{i}^{1/2} & \text{if }N_{1}\sim N_{2}\gg N_{3},\\
\prod_{i=3}^{k}N_{i}^{1/2} & \text{if }N_{1}\sim N_{2}\sim N_{3}.
\end{cases}
\]
\item \emph{Case B:} $N\ll N_{1}\sim N_{2}$ with $N_{1}\gtrsim1$. 
\[
C(N,N_{1},\dots,N_{k})\lesssim N^{3/2,0}\begin{cases}
\log\langle N_{1}\rangle N_{3}^{1/2,0}\prod_{i=4}^{k}N_{i}^{1/2} & \text{if }N_{1}\gg N_{3},\\
\log\langle N_{1}\rangle\prod_{i=4}^{k}N_{i}^{1/2} & \text{if }N_{1}\sim N_{3}\gg N_{4},\\
N^{0,1/2}\prod_{i=4}^{k}N_{i}^{1/2} & \text{if }N_{1}\sim N_{4}.
\end{cases}
\]
\item \emph{Case C:} $N_{1}\lesssim1$. 
\[
C(N,N_{1},\ldots,N_{k})\lesssim N^{1/2}\prod_{i=1}^{k}N_{i}^{1/2}.
\]
\end{itemize}
\end{lem}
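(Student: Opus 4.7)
The starting point is the duality $\|\cdot\|_{DU^{2}_{BO}}\sim\sup_{\|g\|_{V^{2}_{BO}}\le 1}\langle\cdot,g\rangle$, which reduces the lemma to bounding, for each frequency configuration and $|I|=N^{-1}$,
\[
N\Bigl|\int_{I}\!\int_{\lambda\T}g\cdot\prod_{i=1}^{k}P_{N_{i}}u_{i}\,dxdt\Bigr|,
\]
where $g$ is Fourier-localized at $\sim N$, $\|g\|_{V^{2}_{BO}(I)_{\lambda}}\le 1$, and the factor $N$ comes from $\partial_{x}$. Following the outline preceding the lemma, I would pair any two magnitude-separated frequencies via the short-time $V^{2}_{BO}$-bilinear estimate \eqref{eq:ShorttimeV2Estimate}, handle bunches of comparable high frequencies with the linear $L^{6}_{t,x}$ and $L^{8}_{t}L^{4}_{x}$-Strichartz estimates \eqref{eq:L6StrichartzEstimateV2}, and absorb low-frequency tails via $L^{\infty}_{t,x}$ after Bernstein.

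Cases A and C mirror the pattern of Corollary \ref{cor:HighLowShorttimeEstimates} with $g$ taking the place of one factor. For A.1 ($N_{1}\gg N_{2}$) I would pair $(g,P_{N_{2}}u_{2})$ bilinearly, place $P_{N_{1}}u_{1}$ in $L^{\infty}_{t}L^{2}_{x}$, pay $|I|^{1/2}=N^{-1/2}$ for H\"older in time, and put the remaining $P_{N_{i}}u_{i}$ ($i\ge 3$) in $L^{\infty}_{t,x}$ via Bernstein. For A.2 ($N_{1}\sim N_{2}\gg N_{3}$) the frequency-interval subdivision used inside the proof of Corollary \ref{cor:HighLowShorttimeEstimates} produces two magnitude-separated pairs, hence two bilinear Strichartz estimates and the $\log\langle N\rangle$-factor. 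For A.3 ($N_{1}\sim N_{2}\sim N_{3}$), an $L^{6}_{t,x}$-estimate on three high-frequency factors combined with $L^{2}_{t,x}$ on $g$ (paying $|I|^{1/2}=N^{-1/2}$) and Bernstein on tails closes the estimate. Case C is immediate from H\"older and Bernstein.

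The principal difficulty is Case B, since $|I|=N^{-1}$ is much longer than the natural scale $N_{1}^{-1}$ of a bilinear estimate at frequency $N_{1}$. The plan is to partition $I$ into $\sim N_{1}/N$ subintervals $I_{j}$ of length $N_{1}^{-1}$, apply \eqref{eq:ShorttimeV2Estimate} on each, and reassemble via Cauchy--Schwarz in $j$; the $F^{\overline{s}}$-defining supremum $\sup_{|I'|=N_{i}^{-1}}\|P_{N_{i}}u_{i}\|_{U^{2}_{BO}(I')}$ absorbs each per-subinterval factor, producing a loss of $(N_{1}/N)^{1/2}$ that, when combined with two bilinear gains of $N_{1}^{-1/2}$ and the Bernstein gain on the low-frequency $g$, yields the weight $N^{3/2,0}$. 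In B.1 ($N_{1}\gg N_{3}$) one pairs $(g,P_{N_{2}}u_{2})$ and $(P_{N_{1}}u_{1},P_{N_{3}}u_{3})$; in B.2 ($N_{1}\sim N_{3}\gg N_{4}$) one re-applies the frequency-interval trick of A.2 on the high triple; in B.3 ($N_{1}\sim N_{4}$) one uses the $L^{8}_{t}L^{4}_{x}$-estimate on the four comparable high factors and $L^{2}_{t,x}$ on $g$.

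The main obstacle I anticipate is the precise accounting of the $U^{2}_{BO}/V^{2}_{BO}$ almost orthogonality on the subinterval partition in Case B, as this is what pins down the correct weight $N^{3/2,0}$ rather than something weaker. The combinatorial separation claim needed in A.2 and B.2 is a direct transcription of the argument already given in the proof of Corollary \ref{cor:HighLowShorttimeEstimates}. Logarithmic losses from \eqref{eq:ShorttimeV2Estimate} are harmless, as the final dyadic summation in Proposition \ref{prop:ShorttimeNonlinearEstimate} is carried out with non-sharp Sobolev weights.
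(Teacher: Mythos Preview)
Your plan is essentially the paper's own argument, and it closes. Two points need correction, though neither is fatal.

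\textbf{Case B loss factor.} The claim that reassembling over the $\sim N_{1}/N$ subintervals via Cauchy--Schwarz yields a loss of only $(N_{1}/N)^{1/2}$ is unfounded: there is no square-summability in $j$ to exploit. For each subinterval $I_{j}$ you have $\|g\|_{V^{2}_{BO}(I_{j})}\le\|g\|_{V^{2}_{BO}(I)}\le 1$ and $\|P_{N_{i}}u_{i}\|_{U^{2}_{BO}(I_{j})}\le\sup_{|I'|=N_{i}^{-1}}\|P_{N_{i}}u_{i}\|_{U^{2}_{BO}(I')}$, so every term is uniformly bounded and Cauchy--Schwarz buys nothing over crude summation. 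The paper simply sums, paying the full factor $N^{0,-1}N_{1}$; this is exactly compensated by the product of the two bilinear gains $N_{1}^{-1/2}\cdot N_{1}^{-1/2}=N_{1}^{-1}$, leaving $N\cdot N^{0,-1}\cdot N^{1/2,0}=N^{3/2,0}$. No ``almost orthogonality'' is needed, so the obstacle you anticipate does not arise.

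\textbf{Case B.3 H\"older.} Placing $g$ in $L^{2}_{t,x}$ against four factors in $L^{8}_{t}L^{4}_{x}$ fails in space: $\tfrac12+4\cdot\tfrac14=\tfrac32\neq 1$. You must apply Bernstein to $g$ (frequency $N$) to put it in $L^{2}_{t}L^{\infty}_{x}$ at cost $N^{1/2}$, and on each $I_{j}$ pay $|I_{j}|^{1/2}=N_{1}^{-1/2}$ in time; after the crude sum this gives the stated weight $N^{3/2,0}\cdot N^{0,1/2}=N^{3/2,1/2}$. The paper instead avoids duality here, using $L^{1}_{t}L^{2}_{x}\hookrightarrow DU^{2}_{BO}$ together with Bernstein on the output $P_{N}$ to pass to $L^{1}_{x}$, and then four $L^{8}_{t}L^{4}_{x}$ estimates; both routes give the same bound.

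Apart from these fixes, your uniform use of duality (the paper uses the embedding $L^{1}_{t}L^{2}_{x}\hookrightarrow DU^{2}_{BO}$ directly in A.I, A.III, B.III and C) is harmless; it costs you an extra $\log\langle N\rangle$ in A.I, which is absorbed in the dyadic summation as you note.
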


\begin{proof}
\textbf{Case A:} $N\sim N_{1}\gtrsim1$.

\underline{Subcase I:} $N_{1}\gg N_{2}$. We claim that \eqref{eq:FrequencyLocalizedShorttimeNonlinearEstimate}
holds with $C(N,N_{1},\ldots,N_{k})=N_{2}^{1/2,0}\prod_{i=3}^{k}N_{i}^{1/2}$.
We use the embedding $L^{1}(I,L_{\lambda}^{2})\hookrightarrow DU_{BO}^{2}(I)_{\lambda}$,
H\"older in time, bilinear Strichartz \eqref{eq:ShorttimeU2Estimate},
and pointwise bounds to find 
\[
\begin{split} & \quad\Vert P_{N}\partial_{x}(P_{N_{1}}u_{1}\ldots P_{N_{k}}u_{k})\Vert_{DU_{BO}^{2}(I)_{\lambda}}\\
 & \lesssim N\Vert P_{N}(P_{N_{1}}u_{1}\ldots P_{N_{k}}u_{k})\Vert_{L^{1}(I,L_{\lambda}^{2})}\\
 & \lesssim N^{1/2}\Vert P_{N_{1}}u_{1}\ldots P_{N_{k}}u_{k}\Vert_{L_{t}^{2}(I,L_{\lambda}^{2})}\\
 & \lesssim N^{1/2}\Vert P_{N_{1}}u_{1}P_{N_{2}}u_{2}\Vert_{L_{t}^{2}(I,L_{\lambda}^{2})}\prod_{i=3}^{k}\Vert P_{N_{i}}u_{i}\Vert_{L_{t}^{\infty}(I,L_{\lambda}^{\infty})}\\
 & \lesssim N_{2}^{1/2,0}\Big(\prod_{i=3}^{k}N_{i}^{1/2}\Big)\prod_{i=1}^{k}\sup_{\substack{I_{i}\subseteq[0,T],\\
|I_{i}|=N_{i}^{-1}
}
}\Vert P_{N}u_{i}\Vert_{U_{BO}^{2}(I_{i})_{\lambda}}.
\end{split}
\]

\underline{Subcase II:} $N_{1}\sim N_{2}\gg N_{3}$. We claim that
\eqref{eq:FrequencyLocalizedShorttimeNonlinearEstimate} holds with
$C(N,N_{1},\ldots,N_{k})=\log\langle N\rangle N_{3}^{1/2,0}\prod_{i=4}^{k}N_{i}^{1/2}$.
We use duality (cf. \cite[Proposition~2.10]{HadacHerrKoch2009}) to
write 
\begin{align}
 & \quad\Vert\partial_{x}(\prod_{i=1}^{k}P_{N_{i}}u_{i})\Vert_{DU_{BO}^{2}(I)_{\lambda}}\label{eq:ShorttimeDuality-1}\\
 & =\sup_{\Vert v\Vert_{V_{BO}^{2}(I)_{\lambda}}=1}\left|\int_{I}\int_{\lambda\T}(\partial_{x}P_{N}v(s,x))P_{N_{1}}u_{1}(s,x)\ldots P_{N_{k}}u_{k}(s,x)dxds\right|.\nonumber 
\end{align}
As in Corollary \ref{cor:HighLowShorttimeEstimates}, either $P_{N}vP_{N_{1}}u_{1}$
or $P_{N}vP_{N_{2}}u_{2}$ are amenable to a bilinear Strichartz estimate
(after breaking the frequency support into intervals of length $cN$
for some $c\ll1$). Say we can use the bilinear Strichartz estimate
for $P_{N}vP_{N_{1}}u_{1}$. Then we find using \eqref{eq:ShorttimeU2Estimate}
and pointwise bounds: 
\[
\begin{split}\eqref{eq:ShorttimeDuality-1} & \lesssim N\sup_{\Vert v\Vert_{V_{BO}^{2}}=1}\Vert P_{N}vP_{N_{1}}u_{1}\Vert_{L_{t}^{2}(I,L_{\lambda}^{2})}\Vert P_{N_{2}}u_{2}P_{N_{3}}u_{3}\Vert_{L_{t}^{2}(I,L_{\lambda}^{2})}\\
 & \quad\prod_{i=4}^{k}\Vert P_{N_{i}}u_{i}\Vert_{L_{t}^{\infty}(I,L_{\lambda}^{\infty})}\\
 & \lesssim\log\langle N\rangle N_{3}^{1/2,0}\left(\prod_{i=4}^{k}N_{i}^{1/2}\right)\prod_{i=1}^{k}\sup_{\substack{I\subseteq[0,T],\\
|I|=N_{i}^{-1}
}
}\Vert P_{N}u_{i}\Vert_{U_{BO}^{2}(I)_{\lambda}}.
\end{split}
\]

\underline{Subcase III:} $N_{1}\sim N_{2}\sim N_{3}$. We claim that
\eqref{eq:FrequencyLocalizedShorttimeNonlinearEstimate} holds with
$C(N,N_{1},\ldots,N_{k})=\prod_{i=3}^{k}N_{i}^{1/2}$. Indeed, we
use the embedding $L^{1}(I,L_{\lambda}^{2})\hookrightarrow DU_{BO}^{2}(I)_{\lambda}$,
H\"older in time, and $L_{t,x}^{6}$-Strichartz \eqref{eq:L6StrichartzEstimateV2}
to find 
\[
\begin{split} & \quad\Vert P_{N}\partial_{x}(P_{N_{1}}u_{1}\ldots P_{N_{k}}u_{k})\Vert_{DU_{BO}^{2}(I)_{\lambda}}\\
 & \lesssim N\Vert P_{N_{1}}u_{1}\ldots P_{N_{k}}u_{k}\Vert_{L_{t}^{1}(I,L_{\lambda}^{2})}\\
 & \lesssim N^{1/2}\Vert P_{N_{1}}u_{1}\ldots P_{N_{k}}u_{k}\Vert_{L_{t}^{2}(I,L_{\lambda}^{2})}\\
 & \lesssim N^{1/2}\prod_{i=1}^{k}\Vert P_{N_{i}}u_{i}\Vert_{L_{t}^{6}(I,L_{\lambda}^{6})}\prod_{i=4}^{k}\Vert P_{N_{i}}u_{i}\Vert_{L_{t}^{\infty}(I,L_{\lambda}^{\infty})}\\
 & \lesssim\left(\prod_{i=3}^{k}N_{i}^{1/2}\right)\prod_{i=1}^{k}\sup_{\substack{I_{i}\subseteq[0,T],\\
|I_{i}|=N_{i}^{-1}
}
}\Vert P_{N_{i}}u_{i}\Vert_{U_{BO}^{2}(I_{i})_{\lambda}}.
\end{split}
\]

\textbf{Case B:} $N_{1}\sim N_{2}\gg N$, $N_{1}\gtrsim1$.

In this case, we have to increase the time localization matching the
highest frequency.

\underline{Subcase I:} $N_{1}\gg N_{3}$. We claim that \eqref{eq:FrequencyLocalizedShorttimeNonlinearEstimate}
holds with $C(N,N_{1},\ldots,N_{k})=N^{3/2,0}\log\langle N_{1}\rangle N_{3}^{1/2,0}\prod_{i=4}^{k}N_{i}^{1/2}$.
Indeed, we use duality to write 
\[
\begin{split} & \quad\Vert P_{N}\partial_{x}(P_{N_{1}}u_{1}\ldots P_{N_{k}}u_{k})\Vert_{DU_{BO}^{2}(I)_{\lambda}}\\
 & =\sup_{\Vert v\Vert_{V_{BO}^{2}(I)_{\lambda}=1}}\bigg|\int_{I}\int_{\lambda\T}(\partial_{x}P_{N}v)(s,x)P_{N_{1}}u_{1}(s,x)\ldots P_{N_{k}}u_{k}(s,x)dxds\bigg|\\
 & \leq\sup_{\Vert v\Vert_{V_{BO}^{2}(I)_{\lambda}=1}}\sum_{\substack{J\subseteq I,\\
|J|=N_{1}^{-1}
}
}\bigg|\int_{J}\int_{\lambda\T}(\partial_{x}P_{N}v)(s,x)P_{N_{1}}u_{1}(s,x)\ldots P_{N_{k}}u_{k}(s,x)dxds\bigg|.
\end{split}
\]
For an interval $J\subseteq[0,T]$ with $|J|=N_{1}^{-1}$, we estimate
the above integral by 
\[
\begin{split} & \quad\Vert(\partial_{x}P_{N}v)P_{N_{1}}u_{1}\dots P_{N_{k}}u_{k}\Vert_{L_{t}^{1}(J,L_{\lambda}^{2})}\\
 & \lesssim\Vert(\partial_{x}P_{N}v)P_{N_{1}}u_{1}\Vert_{L_{t}^{2}(J,L_{\lambda}^{2})}\Vert P_{N_{2}}u_{2}P_{N_{3}}u_{3}\Vert_{L_{t}^{2}(J,L_{\lambda}^{2})}\prod_{i=4}^{k}\Vert P_{N_{i}}u_{i}\Vert_{L_{t}^{\infty}(J,L_{\lambda}^{\infty})}\\
 & \lesssim N_{1}^{-1}\log\langle N_{1}\rangle N^{1/2,0}N_{3}^{1/2,0}\Vert\partial_{x}P_{N}v\Vert_{V_{BO}^{2}(J)_{\lambda}}\prod_{i=1}^{3}\Vert P_{N_{i}}u_{i}\Vert_{U_{BO}^{2}(J)_{\lambda}}\\
 & \quad\prod_{i=4}^{k}N_{i}^{1/2}\Vert P_{N_{i}}u_{i}\Vert_{U_{BO}^{2}(J)_{\lambda}}.
\end{split}
\]
The claim follows from replacing $\partial_{x}$ by $N$ and summing
over $J\subseteq I$, $|J|=N_{1}^{-1}$, which loses $N^{0,-1}N_{1}$.

\underline{Subcase II:} $N_{1}\sim N_{3}\gg N_{4}$. We claim that
\eqref{eq:FrequencyLocalizedShorttimeNonlinearEstimate} holds with
$C(N,N_{1},\ldots,N_{k})=N^{3/2,0}\log\langle N_{1}\rangle\prod_{i=4}^{k}N_{i}^{1/2}$.
Indeed, we notice that as in Corollary \ref{cor:HighLowShorttimeEstimates},
(after breaking the frequency support into intervals of length $cN$
for some $c\ll1$) there are two frequencies among $N_{1},N_{2},N_{3}$
for which the bilinear Strichartz estimate applies. Thus the same
proof as in Subcase B.I works.

\underline{Subcase III:} $N_{1}\sim N_{4}$. We claim that \eqref{eq:FrequencyLocalizedShorttimeNonlinearEstimate}
holds with $C(N,N_{1},\ldots,N_{k})=N^{3/2,1/2}\prod_{i=4}^{k}N_{i}^{1/2}$.
We use the embedding $L^{1}(I,L_{\lambda}^{2})\hookrightarrow DU_{BO}^{2}(I)_{\lambda}$,
H\"older in time, and Bernstein to find: 
\[
\begin{split} & \quad\|P_{N}\partial_{x}(P_{N_{1}}u_{1}\dots P_{N_{k}}u_{k})\|_{DU_{BO}^{2}(I)_{\lambda}}\\
 & \lesssim N\|P_{N}(P_{N_{1}}u_{1}\dots P_{N_{k}}u_{k})\|_{L_{t}^{1}(I,L_{\lambda}^{2})}\\
 & \lesssim N_{1}N^{1,0}\sup_{\substack{J\subseteq I,\\
|J|=N_{1}^{-1}
}
}\|P_{N}(P_{N_{1}}u_{1}\dots P_{N_{k}}u_{k})\|_{L_{t}^{1}(J,L_{\lambda}^{2})}\\
 & \lesssim N_{1}^{1/2}N^{3/2,1/2}\sup_{\substack{J\subseteq I,\\
|J|=N_{1}^{-1}
}
}\|P_{N_{1}}u_{1}\dots P_{N_{k}}u_{k}\|_{L_{t}^{2}(J,L_{\lambda}^{1})}.
\end{split}
\]
For an interval $J\subseteq[0,T]$ with $|J|=N_{1}^{-1}$, we estimate
the above $L_{J}^{2}L_{\lambda}^{1}$-norm using $L_{t}^{8}L_{x}^{4}$-Strichartz
\eqref{eq:L6StrichartzEstimateV2} and pointwise bounds to find 
\[
\begin{split} & \quad\|P_{N_{1}}u_{1}\dots P_{N_{k}}u_{k}\|_{L_{t}^{2}(J,L_{\lambda}^{1})}\\
 & \lesssim\prod_{i=1}^{4}\sup_{\substack{I_{i}\subseteq[0,T],\\
|I_{i}|=N_{i}^{-1}
}
}\|P_{N_{i}}u_{i}\|_{L_{t}^{8}(I_{i},L_{\lambda}^{4})}\prod_{i=5}^{k}\sup_{\substack{I_{i}\subseteq[0,T],\\
|I_{i}|=N_{i}^{-1}
}
}\|P_{N_{i}}u_{i}\|_{L_{t}^{\infty}(I_{i},L_{\lambda}^{\infty})}\\
 & \lesssim\Big(\prod_{i=5}^{k}N_{i}^{1/2}\Big)\bigg(\prod_{i=1}^{k}\sup_{\substack{I_{i}\subseteq[0,T],\\
|I_{i}|=N_{i}^{-1}
}
}\|P_{N_{i}}u_{i}\|_{U_{BO}^{2}(I_{i})_{\lambda}}\bigg).
\end{split}
\]
The claim follows from substituting this bound into the above $DU_{BO}^{2}(I)_{\lambda}$
bound with $N_{1}\sim N_{4}$.

\textbf{Case C:} $N_{1}\lesssim1$.

Note that $N\lesssim N_{1}\lesssim1$. Thus we may assume that $I=[0,T]$.
We use the embedding $DU_{BO}^{2}(I)_{\lambda}\hookrightarrow L_{t}^{1}(I,L_{\lambda}^{2})$,
H\"older and Bernstein's inequalities to find 
\[
\begin{split} & \Vert P_{N}\partial_{x}(P_{N_{1}}u_{1}\ldots P_{N_{k}}u_{k})\Vert_{DU_{BO}^{2}(I)_{\lambda}}\\
 & \lesssim N\Vert P_{N_{1}}u_{1}\ldots P_{N_{k}}u_{k}\Vert_{L_{t}^{1}(I,L_{\lambda}^{2})}\\
 & \lesssim N\Vert P_{N_{1}}u_{1}\Vert_{L_{t}^{\infty}(I,L_{\lambda}^{2})}\prod_{i=2}^{k}\Vert P_{N_{i}}u_{i}\Vert_{L_{t}^{\infty}(I,L_{\lambda}^{\infty})}\\
 & \lesssim N^{1/2}\prod_{i=1}^{k}N_{i}^{1/2}\Vert P_{N_{i}}u_{i}\Vert_{U_{BO}^{2}(I)_{\lambda}}.
\end{split}
\]

This completes the proof. 
\end{proof}

Proposition \ref{prop:ShorttimeNonlinearEstimate} follows from Lemma
\ref{lem:short-time-DU2-estimate} by summing over the Littlewood-Paley
pieces. We remark that one should use the gain (e.g. $N_{2}^{1,0}$
in Case A.I.) for low frequencies $\lesssim1$ under the condition
$s_{1}<1/2$. We omit the proof.

\subsection{\label{subsec:L1L2-estimate}$L_{t}^{1}L_{x}^{2}$-estimates}

In this section, we estimate the nonlinearity in $L_{t}^{1}L_{x}^{2}$.
Due to the embedding $L_{t}^{1}L_{x}^{2}\hookrightarrow DU_{BO}^{2}$,
the $L_{t}^{1}L_{x}^{2}$-norm is never smaller than the $DU_{BO}^{2}$-norm.
Indeed, if one uses Lemma \ref{lem:shorttime-L1L2} (see below) instead
of Lemma \ref{lem:short-time-DU2-estimate}, then the short-time nonlinear
estimate follows only for $s_{2}\geq\frac{1}{2}$. However, $L_{t}^{1}L_{x}^{2}$
is well-suited for the product estimate (e.g. $\|fg\|_{L_{t}^{1}L_{x}^{2}}\lesssim\|f\|_{L_{t}^{1}L_{x}^{2}}\|g\|_{L_{t,x}^{\infty}}$).
This helps us to deal with the spacetime error terms in Section \ref{sec:DifferenceEstimate}.

We start with the short-time $L_{t}^{1}L_{x}^{2}$-estimate of each
Littlewood-Paley piece. 
\begin{lem}[Short-time $L_{t}^{1}L_{x}^{2}$-estimate]
\label{lem:shorttime-L1L2}Let $T\in(0,1]$ and $N_{1}\gtrsim N_{2}\gtrsim\dots\gtrsim N_{k}$.
We find 
\begin{align*}
 & \sup_{I\subseteq[0,T],\,|I|=N^{-1}}\Vert P_{N}\partial_{x}(P_{N_{1}}u_{1}\ldots P_{N_{k}}u_{k})\Vert_{L_{t}^{1}(I,L_{\lambda}^{2})}\\
 & \;\lesssim\begin{cases}
N^{1,0}N_{2}^{1/2,0}\prod_{i=3}^{k}N_{i}^{1/2}\cdot\prod_{i=1}^{k}\|P_{N_{i}}u_{i}\|_{F_{\lambda}^{0}}, & \text{if }N\sim N_{1}\gg N_{2},\\
N^{1/2}N_{1}^{1,0}N_{3}^{1/2,0}\prod_{i=4}^{k}N_{i}^{1/2}\cdot\prod_{i=1}^{k}\|P_{N_{i}}u_{i}\|_{F_{\lambda}^{0}} & \text{if }N_{1}\sim N_{2}\gtrsim N.
\end{cases}
\end{align*}
\end{lem}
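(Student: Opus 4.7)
The strategy mirrors the dyadic decomposition of Lemma~\ref{lem:short-time-DU2-estimate}, but we estimate directly in $L_t^1 L_x^2$. Since $L_t^1 L_x^2 \hookrightarrow DU_{BO}^2$, the bounds we seek are necessarily at least as large as those of Lemma~\ref{lem:short-time-DU2-estimate}; the additional losses below reflect exactly this asymmetry. In all cases we pull the derivative out via $\|P_N\partial_x f\|_{L_x^2}\lesssim N^{1,0}\|P_N f\|_{L_x^2}$, convert $L_t^1$ to $L_t^2$ by H\"older in time, and control non-Strichartz factors by $\|P_M u\|_{L_x^\infty}\lesssim M^{1/2}\|P_M u\|_{U_{BO}^2}$ via Bernstein and $U_{BO}^2\hookrightarrow L_t^\infty L_x^2$.

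\textbf{Case $N\sim N_1\gg N_2$.} Here $|I|=N^{-1}\sim N_1^{-1}$ is already the natural scale of bilinear Strichartz. H\"older in time on $I$ costs $|I|^{1/2}\sim N^{-1/2}$; then Proposition~\ref{prop:ShorttimeBilinearBenjaminOnoStrichartzEstimate} applied to the pair $(P_{N_1}u_1,P_{N_2}u_2)$, whose frequencies satisfy the magnitude-separation $||\xi_1|-|\xi_2||\sim N_1$, produces $N_1^{-1/2}N_2^{1/2,0}$ in $L_t^2 L_x^2$, while pointwise bounds on the remaining factors contribute $\prod_{i\geq 3}N_i^{1/2}$. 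Combining with the $N^{1,0}$ from the derivative and using $N\sim N_1$ gives the claimed bound.

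\textbf{Case $N_1\sim N_2\gtrsim N$.} Since $|I|=N^{-1}\geq N_1^{-1}$, we decompose $I$ into $\sim N_1/N$ subintervals $J$ of length $N_1^{-1}$ and use additivity $\|\cdot\|_{L_t^1(I,L_x^2)}=\sum_J\|\cdot\|_{L_t^1(J,L_x^2)}$. On each $J$, H\"older in time costs $|J|^{1/2}=N_1^{-1/2}$, after which bilinear Strichartz is applied to a magnitude-separated pair: $(P_{N_1}u_1,P_{N_3}u_3)$ if $N_1\gg N_3$; otherwise we decompose $P_{N_1},P_{N_2},P_{N_3}$ into frequency pieces of size $cN_1$ and appeal to the Littlewood--Paley localization argument from Corollary~\ref{cor:HighLowShorttimeEstimates}, which forces at least one magnitude-separated pair among the three high frequencies for any nontrivial interaction. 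Either way, bilinear Strichartz yields $N_1^{-1/2}N_3^{1/2,0}$ and pointwise Bernstein on the remainder contributes $N_2^{1/2}\prod_{i\geq 4}N_i^{1/2}$. Summing over the $\sim N_1/N$ subintervals using $\|P_{N_i}u_i\|_{U_{BO}^2(J)_\lambda}\leq\|P_{N_i}u_i\|_{F_\lambda^0}$ (valid since $|J|=N_1^{-1}\leq N_i^{-1}$ for every $i$) and collecting factors recovers the claimed bound.

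\textbf{Main obstacle.} The delicate point is the sub-case of Case~2 with $N_1\sim N_2\sim N_3$, where no two of the three highest frequencies are immediately magnitude-separated, so the Littlewood--Paley localization trick of Corollary~\ref{cor:HighLowShorttimeEstimates} is essential to unlock a bilinear Strichartz estimate. A secondary issue is the summation over the $N_1/N$ subintervals: $L^1$-additivity forces a crude summation with no square-summability gain (in contrast to an $L_t^2$-in-time bound), and this is precisely what produces the extra factor $N_1^{1,0}$ relative to the $DU_{BO}^2$-estimate of Lemma~\ref{lem:short-time-DU2-estimate}.
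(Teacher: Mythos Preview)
Your Case 1 is fine and matches the paper, but Case 2 has a genuine gap when $N\ll N_1$. Track the factors on a single subinterval $J$ of length $N_1^{-1}$: the derivative gives $N$, H\"older in time gives $N_1^{-1/2}$, one bilinear Strichartz gives $N_1^{-1/2}N_3^{1/2,0}$, and your pointwise bound on $P_{N_2}u_2$ costs $N_2^{1/2}\sim N_1^{1/2}$. After summing over $\sim N_1/N$ subintervals (say $N\gtrsim 1$) you obtain $N_1^{1/2}N_3^{1/2,0}\prod_{i\geq 4}N_i^{1/2}$, whereas the stated bound (with $N_1^{1,0}=1$) is $N^{1/2}N_3^{1/2,0}\prod_{i\geq 4}N_i^{1/2}$. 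You are off by $(N_1/N)^{1/2}$, which is unbounded in the High$\times$High$\to$Low regime; the claim ``collecting factors recovers the claimed bound'' is therefore false. Your identification of the main obstacle is also off: the difficulty is not locating \emph{one} separated pair when $N_1\sim N_3$, but the loss incurred by placing a \emph{high}-frequency factor in $L_x^\infty$.

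The paper's remedy is to use \emph{two} bilinear Strichartz estimates instead of one, made possible by Bernstein on the \emph{output}. Concretely, on each $J$ one writes $\|P_N\partial_x(\cdots)\|_{L_t^1(J,L_x^2)}\lesssim N^{3/2}\|\cdots\|_{L_t^1(J,L_x^1)}$ (derivative plus Bernstein $L_x^2\to L_x^1$ at cost $N^{1/2}$), then H\"older in space-time splits this into $\|P_{N_1}u_1P_{N_3}u_3\|_{L_t^2(J,L_x^2)}\|P_{N_2}u_2P_{N_4}u_4\|_{L_t^2(J,L_x^2)}\prod_{i\geq 5}\|P_{N_i}u_i\|_{L_{t,x}^\infty}$. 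Both pairs are High$\times$Low and each contributes $N_1^{-1/2}$; the resulting $N_1^{-1}$ exactly cancels the subinterval summation, leaving the $N^{1/2}$ from the output Bernstein. In the sub-case $N_1\sim N_2\sim N_3\gg N_4$ the same scheme works: the Corollary~\ref{cor:HighLowShorttimeEstimates} localization singles out one separated pair among the three high frequencies, and the remaining high frequency pairs with $N_4$. (A minor slip: $\|P_N\partial_x f\|_{L_x^2}\lesssim N\|P_Nf\|_{L_x^2}$, not $N^{1,0}$; for $N>1$ the latter would assert $N\lesssim 1$.)
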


\begin{proof}
By the proof of Lemma \ref{lem:short-time-DU2-estimate}, it suffices
to deal with Cases A.II, B.I, and B.II.

\underline{Case A.II:} $N\sim N_{1}\sim N_{2}\gg N_{3}$ with $N\gtrsim1$.
We use H\"older in time, bilinear Strichartz, and pointwise bounds
to find 
\[
\begin{split} & \quad\Vert P_{N}\partial_{x}(P_{N_{1}}u_{1}\ldots P_{N_{k}}u_{k})\Vert_{L_{t}^{1}(I,L_{\lambda}^{2})}\\
 & \lesssim N^{1/2}\|P_{N_{1}}u_{1}\|_{L_{t}^{\infty}(I,L_{\lambda}^{\infty})}\|P_{N_{2}}u_{2}P_{N_{3}}u_{3}\|_{L_{t}^{2}(I,L_{\lambda}^{2})}\prod_{i=4}^{k}\|P_{N_{i}}u_{i}\|_{L_{t}^{\infty}(I,L_{\lambda}^{\infty})}\\
 & \lesssim N^{1/2}N_{3}^{1/2,0}\prod_{i=4}^{k}N_{i}^{1/2}\cdot\prod_{i=1}^{k}\Vert P_{N_{i}}u_{i}\Vert_{F_{\lambda}^{0}}.
\end{split}
\]

\underline{Case B.I:} We consider the case $N_{1}\sim N_{2}\gg\max(N,N_{3})$
with $N_{1}\gtrsim1$. Here we need to increase time localization.
Decompose $I$ into $O(N^{0,-1}N_{1})$-many intervals $J$ of length
$N_{1}^{-1}$. We apply Bernstein and bilinear Strichartz to find
\[
\begin{split} & \quad\Vert P_{N}\partial_{x}(P_{N_{1}}u_{1}\ldots P_{N_{k}}u_{k})\Vert_{L_{t}^{1}(J,L_{\lambda}^{2})}\\
 & \lesssim N^{3/2}\|P_{N_{1}}u_{1}\dots P_{N_{k}}u_{k}\|_{L_{t}^{1}(J,L_{\lambda}^{1})}\\
 & \lesssim N^{3/2}N_{1}^{-1}\|P_{N_{1}}u_{1}P_{N_{3}}u_{3}\|_{L_{t}^{2}(J,L_{\lambda}^{2})}\|P_{N_{2}}u_{2}P_{N_{4}}u_{3}\|_{L_{t}^{2}(J,L_{\lambda}^{2})}\prod_{i=5}^{k}\|P_{N_{i}}u_{i}\|_{L_{t}^{\infty}(I,L_{\lambda}^{\infty})}\\
 & \lesssim\Big(N^{3/2}N_{1}^{-1}N_{3}^{1/2,0}N_{4}^{1/2,0}\prod_{i=5}^{k}N_{i}^{1/2}\Big)\prod_{i=1}^{k}\Vert P_{N_{i}}u_{i}\Vert_{F_{\lambda}^{0}}.
\end{split}
\]
The claim follows from summing over $J\subseteq I$, $|J|=N_{1}^{-1}$,
which loses $N^{0,-1}N_{1}$.

\underline{Case B.II:} We consider the case $N_{1}\sim N_{2}\sim N_{3}\gg\max(N,N_{4})$
with $N_{1}\gtrsim1$. As in the proof of Lemma \ref{lem:short-time-DU2-estimate},
there is a pair of frequencies among $N_{1},N_{2},N_{3}$ amenable
to the bilinear Strichartz estimate. Say $N_{1},N_{3}$ is such a
pair. One then proceeds as in Case B.I. This completes the proof. 
\end{proof}
We turn to the $L_{T}^{1}L_{x}^{2}$-estimate of the nonlinearity.
For later purposes, we estimate each $P_{N}$-portion of the nonlinearity,
i.e., $P_{N}\partial_{x}(u^{k})$. Due to High$\times$High$\to$Low
interaction, we cannot say that $P_{N}\partial_{x}(u^{k})=(P_{N}\partial_{x}u)u^{k-1}$.
However we want to ensure $P_{N}\partial_{x}(u^{k})\lesssim c_{N}u^{k-1}$,
by introducing some $\ell_{N}^{2}$-sequence $c_{N}$ that mimics
$\|P_{\sim N}u\|_{F^{0}}$ and also incorporates frequencies other
than $N$. This is related with the frequency envelopes in \cite{Tao2001WaveMaps}.

Let $\overline{s}=(s_{1},s_{2})$, $0<s_{1}<1/2<s_{2}$, $\delta:=\min(\frac{1}{2}-s_{1},s_{2}-\frac{1}{2},\frac{1}{100})$,
and $s'\in\{-1\}\cup[0,\infty)$. Define for dyadic $N$ 
\begin{equation}
c_{N}^{(v,\overline{s},s')}:=\|P_{\sim N}v\|_{F_{\lambda}^{0}}+N^{\delta,-\delta}\cdot N^{-(\overline{s}+s')}\|v\|_{F_{\lambda}^{\overline{s}+s'}}\label{eq:c_N-definition}
\end{equation}
We use short-hand notation 
\[
c_{N}^{(u,\overline{s})}=c_{N}^{(u,\overline{s},0)}.
\]

Set 
\begin{equation}
d_{N}^{(v,\overline{s},s^{\prime})}=N^{\overline{s}+s'}c_{N}^{(v,\overline{s},s')}.\label{eq:d_N-definition}
\end{equation}
Due to $\delta>0$, we have a $\ell_{N}^{2}$-summation property 
\begin{equation}
\|d_{N}^{(v,\overline{s},s')}\|_{\ell_{N}^{2}}\lesssim\|v\|_{F_{\lambda}^{\overline{s}+s'}}.\label{eq:l2-summation-c_N}
\end{equation}

\begin{lem}[$L_{T}^{1}L_{x}^{2}$-estimate]
\label{lem:decomp-prod}Let $T\in(0,1]$; let $\overline{s}=(s_{1},s_{2})$,
$0<s_{1}<1/2<s_{2}$, $\delta:=\min(\frac{1}{2}-s_{1},s_{2}-\frac{1}{2},\frac{1}{100})$,
and $s'\in\{-1\}\cup[0,\infty)$. Then we have the $L_{T}^{1}L_{x}^{2}$-nonlinear
estimates 
\[
\begin{aligned} & \|P_{N}\partial_{x}(vu^{k-1})\|_{L_{t}^{1}([0,T],L_{\lambda}^{2})}\\
 & \lesssim N^{1/2,0}\|u\|_{F_{\lambda}^{\overline{s}}}^{k-2}\cdot\begin{cases}
\|u\|_{F_{\lambda}^{\overline{s}}}c_{N}^{(v,\overline{s},-1)} & \text{if }s'=-1,\\
c_{N}^{(v,\overline{s},s')}\|u\|_{F_{\lambda}^{\overline{s}}}+c_{N}^{(u,\overline{s},s')}\|v\|_{F_{\lambda}^{\overline{s}}} & \text{if }s'\geq0.
\end{cases}
\end{aligned}
\]
\end{lem}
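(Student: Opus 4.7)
My approach is a Littlewood--Paley decomposition followed by a case analysis matching the structure of Lemma \ref{lem:shorttime-L1L2}, with the frequency envelopes $c_N^{(\cdot,\overline{s},s')}$ absorbing the highest-frequency factor. Writing
\[
P_N\partial_x(vu^{k-1})=\sum_{N_1,\dots,N_k}P_N\partial_x(P_{N_1}w_1\cdots P_{N_k}w_k)
\]
with exactly one $w_j=v$ and the rest $=u$, I order $N_1\geq\dots\geq N_k$ by symmetry. Nontrivial frequency interactions split into Case A ($N\sim N_1\gg N_2$) or Case B ($N_1\sim N_2\gtrsim N$) exactly as in Lemma \ref{lem:shorttime-L1L2}.

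For each summand I apply Lemma \ref{lem:shorttime-L1L2} on intervals of length $N^{-1}$ and sum over the $\lesssim TN\leq N$ such intervals tiling $[0,T]$. To reach the stated $F_\lambda^{\overline{s}}$-norms I convert $\|P_{N_i}w\|_{F_\lambda^0}\leq N_i^{-\overline{s}}\|P_{N_i}w\|_{F_\lambda^{\overline{s}}}$ for the lower-frequency factors, with geometric dyadic sums in $N_2,\dots,N_k$ converging thanks to $0<s_1<1/2<s_2$, as in the remark following Lemma \ref{lem:short-time-DU2-estimate}. The remaining $N_1$-dependence (respectively $N_1$- and $N_2$-dependence in Case B) is then dealt with using the envelope $c_N^{(\cdot,\overline{s},s')}$ via the two branches below.

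In Case A the top-frequency slot satisfies $N_1\sim N$: if that slot carries $v$, I identify $\|P_{N_1}v\|_{F_\lambda^0}$ with the first term of $c_N^{(v,\overline{s},s')}$; if instead that slot carries $u$, I use the first term of $c_N^{(u,\overline{s},s')}$ for the top $u$ and pay $\|v\|_{F_\lambda^{\overline{s}}}$ for $v$, a branch which only makes sense for $s'\geq 0$. Case B with $v$ at a low-frequency slot is handled identically. The delicate configuration is Case B with $v$ at frequency $N_1\gg N$ (true high-high-to-low output with $v$ at the top): I write $\|P_{N_1}v\|_{F_\lambda^0}\leq N_1^{-(\overline{s}+s')}\|P_{N_1}v\|_{F_\lambda^{\overline{s}+s'}}$ and sum in $N_1$ via Cauchy--Schwarz and \eqref{eq:l2-summation-c_N}; the gap between output and input frequencies is absorbed by the extra smoothing $N^{1/2,0}N_1^{-1,0}$ provided by Case B of Lemma \ref{lem:shorttime-L1L2}, while the residual loss is balanced by the factor $N^{\delta,-\delta}$ in the second term of $c_N^{(v,\overline{s},s')}$.

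The main obstacle is exactly this last configuration, particularly when $s'=-1$: here the symmetric $u$-branch is not available, so one must verify that the combined gain from the high-high-to-low smoothing $N_1^{-1}$, the envelope gap $N^{\delta,-\delta}$, and the extracted factor $N_1^{-(\overline{s}-1)}$ precisely offset the derivative loss in $\partial_x$ together with the negative regularity of $\|v\|_{F_\lambda^{\overline{s}-1}}$. This is where the choice $\delta\leq\min(\tfrac{1}{2}-s_1,s_2-\tfrac{1}{2})$ built into \eqref{eq:c_N-definition} becomes essential, both so that the individual dyadic pieces close and so that the $\ell_N^2$-summability \eqref{eq:l2-summation-c_N} is preserved; once this balancing is in place, the two cases of the lemma follow by collecting the bounds across all configurations of $v$ and $u$.
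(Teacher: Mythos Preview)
Your plan follows the paper's proof essentially step for step: decompose $[0,T]$ into intervals of length $\min(N^{-1},T)$, apply Lemma~\ref{lem:shorttime-L1L2} on each piece, then run the same Case A / Case B split according to the position of $v$ among the frequency-ordered inputs. The paper's subcases A.I, A.II, B.I, B.II correspond exactly to your four configurations.

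There is one genuine omission. In Case A with $u$ at the top slot (and the analogous Case B with $v$ in a low slot) you correctly note that the $c_N^{(u,\overline{s},s')}\|v\|_{F_\lambda^{\overline{s}}}$ branch is only available when $s'\geq 0$, but you never say what replaces it when $s'=-1$. For $s'=-1$ the right-hand side contains only $c_N^{(v,\overline{s},-1)}$, so you must still land on that envelope even though $v$ sits at a frequency $K\lesssim N$. The paper does this (Subcases A.II and B.II) by writing $\|P_K v\|_{F_\lambda^0}\leq K^{-(\overline{s}-1)}\|P_K v\|_{F_\lambda^{\overline{s}-1}}$, summing the resulting $K^{1/2-s_1,\,1-s_2}$ over $K\lesssim N$, and then using the top-frequency $u$ to supply an additional $N^{-\overline{s}}$; the combined factor $N^{-2s_2+1}$ (for $N\geq 1$) is then absorbed into the second term of $c_N^{(v,\overline{s},-1)}$ via $\delta\leq s_2-\tfrac12$. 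This step is not hard, but it is missing from your outline.

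A minor correction on the ``delicate'' Case B with $v$ at $N_1\gg N$: Lemma~\ref{lem:shorttime-L1L2} gives the prefactor $N^{1/2}N_1^{1,0}$, not $N^{1/2,0}N_1^{-1,0}$; for $N_1\gtrsim 1$ this is simply $N^{1/2}$, so there is no explicit $N_1^{-1}$ ``smoothing''. What actually closes the estimate is that the two high-frequency inputs together contribute $N_1^{-(\overline{s}+s')}\cdot N_1^{-\overline{s}}$, and the resulting exponent $-2s_2-s'<0$ makes the $N_1$-sum geometric down to $N$; the leftover $N^{1/2-2s_2-s'}$ is then controlled by the second term of $c_N^{(v,\overline{s},s')}$ exactly through $\delta\leq s_2-\tfrac12$, as you say.
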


\begin{rem}
Lemma \ref{lem:decomp-prod} can be viewed as 
\[
P_{N}\partial_{x}(vu^{k-1})\approx(P_{N}\partial_{x}v)u^{k-1}+(P_{N}\partial_{x}u)vu^{k-2},
\]
provided that $v\in F_{\lambda}^{\overline{s}-1}$ and $u\in F_{\lambda}^{\overline{s}}$
for $s'=-1$, or $v,u\in F_{\lambda}^{\overline{s}+s'}$ for $s'\geq0$.
We need to choose a function space to make sense of $\approx$. The
crude choice $L_{t}^{\infty}L_{x}^{2}$ rather than $L_{t}^{1}L_{x}^{2}$
would work for $s_{2}>\frac{3}{4}$, but it seems difficult to reach
$s_{2}>\frac{1}{2}$ due to problematic High$\times$High$\to$Low
interaction of $vu$. In order to reach $s_{2}>\frac{1}{2}$, we choose
a slightly larger space $L_{t}^{1}L_{x}^{2}$.
\end{rem}

\begin{rem}
These estimates will be used in the energy estimates for the differences
of solutions. The case $s'=-1$ will be used for the Lipschitz estimates
in a weaker topology; see \eqref{eq:s-1-int-by-pts-time}. The case
$s'\in\{0,1\}$ will be used in establishing continuity estimates;
see \eqref{eq:s-int-by-pts-time-1} and \eqref{eq:s-int-by-pts-time-2}.
\end{rem}


\begin{proof}[Proof of Lemma \ref{lem:decomp-prod}]
For technical convenience, let us give a proof only for $1/2<s_{2}<1$.
The case $s_{2}\geq1$ is in fact easier, because we have higher regularity.

We decompose $[0,T]$ into $O(N^{0,1})$-many intervals of length
$\sim\min(N^{-1},T)$. It suffices to show the estimates 
\begin{align}
\|P_{N}\partial_{x}(vu^{k-1})\|_{L_{t}^{1}(I,L_{\lambda}^{2})} & \lesssim N^{1/2,0}\|u\|_{F_{\lambda}^{\overline{s}}}^{k-1}c_{N}^{(v,\overline{s},-1)},\label{eq:L1L2-claim1}\\
\|P_{N}\partial_{x}(vu^{k-1})\|_{L_{t}^{1}(I,L_{\lambda}^{2})} & \lesssim N^{1/2,0}\|u\|_{F_{\lambda}^{\overline{s}}}^{k-2}(c_{N}^{(v,\overline{s},s')}\|u\|_{F_{\lambda}^{s}}+c_{N}^{(u,\overline{s},s')}\|v\|_{F_{\lambda}^{\overline{s}}})\label{eq:L1L2-claim2}
\end{align}
uniformly for $I\subseteq[0,T]$ with $|I|=\min(N^{-1},T)$. Fix such
$I$. We will estimate each piece 
\begin{equation}
\|P_{N}\partial_{x}(P_{K}vP_{K_{1}}u\dots P_{K_{k-1}}u)\|_{L_{t}^{1}(I,L_{\lambda}^{2})}\label{eq:L1L2-est-temp}
\end{equation}
and sum over $K,K_{1},\dots,K_{k-1}$. Denote by $M_{1},\dots,M_{k}$
the decreasing rearrangement of $K,K_{1},\dots,K_{k-1}$. Due to otherwise
impossible frequency interactions, we consider two cases: (Case A)
$N\sim M_{1}\gg M_{2}$ and (Case B) $M_{1}\sim M_{2}\gtrsim N$.

\textbf{Case A:} $N\sim M_{1}\gg M_{2}$. Recall from Lemma \ref{lem:shorttime-L1L2}
that 
\begin{align*}
\eqref{eq:L1L2-est-temp} & \lesssim N^{1,0}M_{2}^{1/2,0}\Big(\prod_{i=3}^{k}M_{i}^{1/2}\Big)\|P_{K}v\|_{F_{\lambda}^{0}}\prod_{i=1}^{k-1}\|P_{K_{i}}u\|_{F_{\lambda}^{0}}.
\end{align*}

\underline{Subcase I:} $K=M_{1}$. We find 
\[
\eqref{eq:L1L2-est-temp}\lesssim N^{1,0}\|P_{K}v\|_{F_{\lambda}^{0}}\prod_{i=1}^{k-1}\|P_{K_{i}}u\|_{F_{\lambda}^{1/2}}.
\]
Summing over $K_{1},\dots,K_{k-1}$ and $K\sim N$ yields both \eqref{eq:L1L2-claim1}
and \eqref{eq:L1L2-claim2}. (Recall $s_{1}<\frac{1}{2}<s_{2}$.)

\underline{Subcase II:} $K\in\{M_{2},\dots,M_{k}\}$. Note that $K_{1}=M_{1}\sim N$.
We find 
\[
\eqref{eq:L1L2-est-temp}\lesssim N^{1,0}\|P_{K_{1}}u\|_{F_{\lambda}^{0}}K^{\frac{1}{2}-s_{1},-(s_{2}+s')}\|P_{K}v\|_{F_{\lambda}^{\overline{s}+s^{\prime}}}\prod_{i=2}^{k-1}\|P_{K_{i}}u\|_{F_{\lambda}^{1/2}}.
\]
We sum over $K,K_{2},\dots,K_{k-1}$ and then use $K_{1}\sim N$.
If $s'=-1$, then this sums to $N^{\frac{3}{2}-s_{1},-2s_{2}+1}\|v\|_{F_{\lambda}^{\overline{s}-1}}\|u\|_{F_{\lambda}^{\overline{s}}}^{k-1}$,
yielding \eqref{eq:L1L2-claim1}. If $s'\geq0$, then this sums to
$N^{\frac{3}{2}-s_{1},0}\|P_{\sim N}u\|_{F_{\lambda}^{0}}\|v\|_{F_{\lambda}^{\overline{s}}}\|u\|_{F_{\lambda}^{\overline{s}}}^{k-2}$,
yielding \eqref{eq:L1L2-claim2}.

\textbf{Case B:} $M_{1}\sim M_{2}\gtrsim N$. Recall from Lemma \ref{lem:shorttime-L1L2}
that 
\[
\eqref{eq:L1L2-est-temp}\lesssim\Big(N^{1/2}M_{1}^{1,0}M_{3}^{1/2,0}\prod_{i=4}^{k}M_{i}^{1/2}\Big)\|P_{K}v\|_{F_{\lambda}^{0}}\prod_{i=1}^{k-1}\|P_{K_{i}}u\|_{F_{\lambda}^{0}}.
\]

\underline{Subcase I:} $K\in\{M_{1},M_{2}\}$. Note that $\{K,K_{1}\}=\{M_{1},M_{2}\}$.
We find 
\[
\eqref{eq:L1L2-est-temp}\lesssim N^{\frac{1}{2}}M_{1}^{-2s_{1}+1,-2s_{2}-s'}\|P_{K}v\|_{F_{\lambda}^{\overline{s}+s'}}\|P_{K_{1}}u\|_{F_{\lambda}^{\overline{s}}}\prod_{i=2}^{k-1}\|P_{K_{i}}u\|_{F_{\lambda}^{1/2}}.
\]
Since $s_{2}>\frac{1}{2}$ and $s'\geq-1$, we have $-2s_{2}-s'<0$.
Separately considering the cases $N\leq1$ and $N\geq1$, this sums
to $N^{\frac{1}{2},-2s_{2}-s'+\frac{1}{2}}\|v\|_{F_{\lambda}^{\overline{s}+s'}}\|u\|_{F_{\lambda}^{\overline{s}}}^{k-1}$,
yielding both \eqref{eq:L1L2-claim1} and \eqref{eq:L1L2-claim2}.

\underline{Subcase II:} $K\in\{M_{3},\dots,M_{k}\}$. Note that $\{K_{1},K_{2}\}=\{M_{1},M_{2}\}$.
We find 
\begin{align*}
\eqref{eq:L1L2-est-temp} & \lesssim N^{1/2}M_{1}^{1-2s_{1},-2s_{2}-\ell}\|P_{M_{1}}u\|_{F_{\lambda}^{\overline{s}+\ell}}\|P_{M_{2}}u\|_{F_{\lambda}^{\overline{s}}}\\
 & \quad\quad\times K^{\frac{1}{2}-s_{1},-(s_{2}+j)}\|P_{K}v\|_{F_{\lambda}^{\overline{s}+j}}\prod_{i=3}^{k-1}\|P_{K_{i}}u\|_{F_{\lambda}^{1/2}},
\end{align*}
where $j,\ell\in\R$. Choosing $j=-1$ and $\ell=0$, the above sums
to 
\[
N^{\frac{1}{2},-3s_{2}+\frac{3}{2}}\|v\|_{F_{\lambda}^{\overline{s}-1}}\|u\|_{F_{\lambda}^{\overline{s}}}^{k-1},
\]
yielding \eqref{eq:L1L2-claim1}. Choosing $j=0$ and $\ell=s'\geq0$,
the above sums to 
\[
N^{\frac{1}{2},-2s_{2}+\frac{1}{2}-s'}\|v\|_{F_{\lambda}^{\overline{s}}}\|u\|_{F_{\lambda}^{\overline{s}+s'}}\|u\|_{F_{\lambda}^{\overline{s}}}^{k-2},
\]
yielding \eqref{eq:L1L2-claim2}. This finishes the proof.
\end{proof}

\section{\label{sec:EnergyEstimateSolutions}Energy estimates}

The purpose of this section is to propagate the energy norm. 
\begin{prop}[Energy estimates for solutions]
\label{prop:EnergyEstimateSolutions} Let $0<s_{1}<1/2\leq s_{2}$
and set $\overline{s}=(s_{1},s_{2})$. Let $s'\geq0$. Then there
exists $\epsilon=\epsilon(\overline{s})>0$ with the following properties:
(denote $\overline{s}(\epsilon)=(s_{1}+\epsilon,s_{2}-\epsilon)$)
\begin{itemize}
\item If $s_{2}>\frac{1}{2}$ and $u$ is a smooth solution to \eqref{eq:GeneralizedBenjaminOnoEquation}
on $[0,T]\subseteq[0,1]$, then we have 
\begin{equation}
\Vert u\Vert_{E_{\lambda}^{\overline{s}+s'}(T)}^{2}\lesssim\Vert u_{0}\Vert_{H_{\lambda}^{\overline{s}+s'}}^{2}+\|u\|_{F_{\lambda}^{\overline{s}(\epsilon)+s'}(T)}^{2}(\Vert u\Vert_{F_{\lambda}^{\overline{s}(\epsilon)}(T)}^{k-1}+\Vert u\Vert_{F_{\lambda}^{s_{1}+\varepsilon,\frac{1}{2}-\varepsilon}(T)}^{2k-2}).\label{eq:EnergyEstimateSolutions}
\end{equation}
\item If $s_{2}=\frac{1}{2}$ and $u$ is a smooth solution to \eqref{eq:GeneralizedBenjaminOnoEquation}
with $k=4$ on $[0,T]\subseteq[0,1]$, then we have 
\begin{equation}
\Vert u\Vert_{E_{\lambda}^{\overline{s}+s'}(T)}^{2}\lesssim\Vert u_{0}\Vert_{H_{\lambda}^{\overline{s}+s'}}^{2}+\|u\|_{F_{\lambda}^{\overline{s}+s'}(T)}^{2}(\Vert u\Vert_{F_{\lambda}^{\overline{s}}(T)}^{k-1}+\Vert u\Vert_{F_{\lambda}^{\overline{s}(\epsilon)}(T)}^{2k-2}).\label{eq:EnergyEstimateII}
\end{equation}
\end{itemize}
\end{prop}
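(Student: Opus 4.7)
The plan is to derive the energy estimate by time differentiation followed by a modified-energy correction. Applying $P_N$ to \eqref{eq:GeneralizedBenjaminOnoEquation}, pairing with $P_N u$ and integrating over $\lambda\T$ gives
\[
\frac{d}{dt}\|P_N u(t)\|^2_{L^2_\lambda} = \mp 2\int_{\lambda\T} P_N u\cdot P_N\partial_x(u^k)\,dx.
\]
Integrating in time and summing over $N$ with the weights $N^{2(\overline{s}+s')}$, the estimate reduces to bounding
\[
\mathcal{I} := \sum_N N^{2(\overline{s}+s')}\int_0^t\!\!\int_{\lambda\T} P_N u\cdot P_N\partial_x(u^k)\,dx\,ds
\]
for $t\in[0,T]$. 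For each $N$ I would dyadically decompose $u^k = \prod_{i=1}^k P_{N_i}u$ and, after symmetrization among the $N_i$, arrange $N_1\geq\cdots\geq N_k$, splitting into three regimes: (A) High$\times$Low, $N\sim N_1\gg N_2$; (B) $N\sim N_1\sim N_2$; and (C) High$\times$High$\to$Low, $N\ll N_1\sim N_2$.

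In regimes (B) and (C) the $\partial_x$ loss is absorbed directly: the short-time bilinear Strichartz of Corollary \ref{cor:HighLowShorttimeEstimates}, together with the $L^6$-Strichartz \eqref{eq:L6StrichartzEstimateV2}, recovers the derivative via the frequency-adapted time localization, yielding contributions bounded by $\|u\|^2_{F_\lambda^{\overline{s}(\epsilon)+s'}(T)}\|u\|^{k-1}_{F_\lambda^{\overline{s}(\epsilon)}(T)}$.

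Regime (A) is the genuine obstruction, since the $N\sim N_1$ derivative cannot be recovered from the $k-1$ low-frequency factors alone. Here I would pass to a modified energy of the form
\[
\widetilde{E}_N(u) = \|P_N u\|^2_{L^2_\lambda} + \mathrm{Re}\!\!\sum_{\xi_0+\cdots+\xi_k=0}\!\! m_N(\xi_0,\ldots,\xi_k)\,\widehat u(\xi_0)\cdots\widehat u(\xi_k),
\]
where the multiplier $m_N$ is defined so that $\frac{d}{dt}\widetilde E_N$ cancels the resonant High$\times$Low contribution; concretely, $m_N$ equals the nonresonant symbol divided by the resonance function $\Omega = \sum_i \xi_i|\xi_i|$. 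In regime (A) one has $\Omega\sim N_1 N_j$ for some $j\geq 2$, which is nondegenerate, so $m_N$ is well-defined and the fixed-time correction is controlled by $\|u\|^2_{F_\lambda^{\overline{s}(\epsilon)+s'}(T)}\|u\|^{k-1}_{F_\lambda^{\overline{s}(\epsilon)}(T)}$. The spacetime errors appearing when $\partial_t$ falls on any of the $k$ factors in the correction are handled by substituting $\partial_t u = -\mathcal H\partial_{xx}u\mp\partial_x(u^k)$ and invoking Lemma \ref{lem:decomp-prod}, producing $(2k)$-linear integrals absorbed into $\|u\|^2_{F_\lambda^{\overline{s}(\epsilon)+s'}(T)}\|u\|^{2k-2}_{F_\lambda^{s_1+\epsilon,1/2-\epsilon}(T)}$.

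The main obstacle is the sub-regime of (B) where all $k$ input frequencies are comparable: $N\sim N_1\sim\cdots\sim N_k$. Here the resonance function can vanish, so no modified-energy correction is available, and the multilinear integral must be estimated directly. In the quartic case $k=4$, the symmetrized multiplier vanishes on the resonant set and a careful bilinear Strichartz analysis closes the estimate at the endpoint $s_2=1/2$, yielding \eqref{eq:EnergyEstimateII}. For $k\geq 5$ no analogous algebraic cancellation is available, which forces the $\epsilon$-loss encoded by $\overline{s}(\epsilon)$ in \eqref{eq:EnergyEstimateSolutions}; the compensating low-frequency weight adjustment $s_1\mapsto s_1+\epsilon$ ensures that the dyadic sums in the regime $N\leq 1$ remain summable uniformly in $\lambda$, in accordance with \eqref{eq:DefLowHighSobolevSpace}.
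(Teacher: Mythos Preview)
Your overall architecture---symmetrize, introduce a normal-form correction, and handle the resulting $(2k)$-linear spacetime errors via Lemma~\ref{lem:decomp-prod}---matches the paper's, but the case split has two genuine gaps and misses the key algebraic lemma.

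First, the claim that $\Omega_k$ is nondegenerate throughout regime~(A) is false for $k\ge4$. When $N\sim N_1\gg N_2\sim N_3$ with $\xi_2,\xi_3$ of opposite sign, one has $\Omega_k=-2(\xi_0+\xi_3)(\xi_0+\xi_1)+O(N_2N_4)$, which vanishes on a codimension-one set inside this region; on that set $A_k\approx a(N)(\xi_0+\xi_1)$ is generically nonzero (arrange $\xi_0+\xi_1\sim N_2N_4/N\neq0$ by balancing the low-frequency contributions). Hence $m_N=A_k/\Omega_k$ has poles and your boundary term is not controlled. This is exactly where $k\ge4$ departs from $k\in\{2,3\}$.

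Second, regimes~(B) and~(C) cannot be closed by Strichartz alone near $s_2=1/2$. In the sub-case $N\sim N_1\sim N_2\gg N_3$ of~(B), even after applying Corollary~\ref{cor:HighLowShorttimeEstimates} to the three high frequencies and one low factor, summing over the $N^{-1}$-time intervals leaves a full factor of $N$ that forces $s_2\gtrsim1$. But this sub-case is strongly nonresonant ($|\Omega_k|\sim N^2$) and must also go through the normal form; you have placed it on the wrong side of the split.

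The paper resolves both issues with a single decomposition $A_k=b\,\Omega_k+c$ (Lemma~\ref{lem:decomposition-Ak}) valid across \emph{all} frequency configurations: $|b|\lesssim a(N_1)N_1^{-1}$ is a bounded approximate quotient, and the remainder satisfies $|c|\lesssim\bigl(a(N_1)N_1^{-1}N_3+a(N_3)\bigr)N_5$, supported where $N_3\sim N_4$. The $b\Omega_k$-piece is integrated by parts in time (Lemma~\ref{lem:Contribution-bOmega}), the $c$-piece estimated directly (Lemma~\ref{lem:Contribution-c}). Your assertion that for $k=4$ ``the symmetrized multiplier vanishes on the resonant set'' is also incorrect; $A_k$ does not vanish there. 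What singles out $k=4$ at $s_2=1/2$ is that with exactly $k+1=5$ frequencies the $N_5$ in the bound for $c$ is the smallest one, so after four $L^8_tL^4_x$-Strichartz factors the remaining $N_5^{3/2}$ sums in $\ell^2$; for $k\ge5$ the same argument closes only in the $\ell^1$-Besov scale.
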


\begin{rem}
The case $s'>0$ corresponds to the persistence of regularity estimates.
\end{rem}

\begin{rem}[Besov version for $k\geq5$ at $s_{2}=1/2$]
If $u$ is a smooth solution to \eqref{eq:GeneralizedBenjaminOnoEquation}
with $k\geq5$, the argument of the proof of \eqref{eq:EnergyEstimateII}
gives the estimate 
\[
\Vert u\Vert_{E_{1,\lambda}^{s_{1},1/2}(T)}^{2}\lesssim\Vert u_{0}\Vert_{B_{1,\lambda}^{s_{1},1/2}(T)}^{2}+\Vert u\Vert_{F_{1,\lambda}^{s_{1}+\varepsilon,1/2}(T)}^{k+1}+\Vert u\Vert_{F_{1,\lambda}^{s_{1}+\varepsilon,1/2}(T)}^{2k}.
\]
In the above display $B_{1,\lambda}^{s_{1},1/2}$, $E_{1,\lambda}^{s_{1},1/2}(T)$,
and $F_{1,\lambda}^{s_{1}+\varepsilon,1/2}(T)$ denote $1$-Besov
variants, i.e., 
\[
\Vert u\Vert_{E_{1,\lambda}^{s_{1},1/2}(T)}=\Vert P_{\leq1}u(0)\Vert_{H_{\lambda}^{s_{1},1/2}}+\sum_{N\geq1}N^{1/2}\sup_{t\in[0,T]}\Vert P_{N}u(t)\Vert_{L_{\lambda}^{2}},
\]
and likewise for $B_{1,\lambda}^{s_{1},1/2}$ and $F_{1,\lambda}^{s_{1},1/2}(T)$.
\end{rem}

To prove Proposition \ref{prop:EnergyEstimateSolutions}, we have
to estimate the $E_{\lambda}^{\overline{s}+s'}$-norm 
\[
\|u\|_{E_{\lambda}^{\overline{s}+s'}(T)}^{2}=\Vert P_{\leq1}u(0)\Vert_{H_{\lambda}^{\overline{s}+s'}}^{2}+\sum_{N>1}N^{2(s_{2}+s')}\|P_{N}u\|_{L_{t}^{\infty}([0,T],L_{\lambda}^{2})}^{2}.
\]
Since the low frequency part ($\leq1$) is only written in terms of
the initial data, it suffices to consider the high frequency part
($>1$). Instead of estimating each $N^{2(s_{2}+s')}\|P_{N}u\|_{L_{T}^{\infty}L_{\lambda}^{2}}^{2}$
(having the symbol $N^{2(s_{2}+s')}\chi_{N}^{2}$) directly, it is
more convenient to estimate a variant of $N^{2(s_{2}+s')}\|P_{N}u\|_{L_{T}^{\infty}L_{\lambda}^{2}}^{2}$
with slowly varying symbols, which are smoother in a sense.

More concretely, we consider the following set of symbols: 
\begin{align*}
S^{s_{2}+s'} & =\{a_{N}^{s_{2}+s'}:N\gtrsim1\},\\
a_{N}^{s_{2}+s'}(\xi) & =N^{2(s_{2}+s')}(1-\chi(4\xi/N)).
\end{align*}
In what follows, we use short-hand notations $a_{N}=a_{N}^{s_{2}+s'}$
or $a=a_{N}$. These symbols satisfy the following properties (uniformly
in $N\gtrsim1$):
\begin{itemize}
\item (Vanishing) $a_{N}(\xi)=0$ if $|\xi|\leq N/4$. 
\item (Comparison with Sobolev weights) 
\begin{equation}
\begin{gathered}a_{N}(\xi)=N^{2(s_{2}+s')}\text{ for }|\xi|\geq N/2,\\
\sum_{N>1}a_{N}(\xi)\lesssim|\xi|^{2(s_{2}+s')}.
\end{gathered}
\label{eq:ComparisonSobolevWeight}
\end{equation}
\item (Regularity) $|\partial^{\alpha}a_{N}(\xi)|\lesssim_{\alpha}\max(N^{2(s_{2}+s')},a_{N}(\xi))\langle\xi\rangle^{-\alpha}$
for $|\xi|\geq N/4$.
\end{itemize}

For $a\in S^{s_{2}+s'}$, we define the generalized Sobolev (semi-)norm
\[
\Vert u(t)\Vert_{H^{a}}^{2}=\frac{1}{\lambda}\sum_{\xi\in\Z/\lambda}a(\xi)|\hat{u}(t,\xi)|^{2}=\int_{\Gamma_{\lambda}^{2}}a(\xi_{1})\hat{u}(t,\xi_{1})\hat{u}(t,\xi_{2})d\Gamma_{\lambda}^{2},
\]
where the real-valuedness $\overline{u}=u$ is used in the last equality.
The measure $\Gamma_{\lambda}^{k}$ on $\{(\xi_{1},\ldots,\xi_{k})\in\times_{i=1}^{k}\Z/\lambda\,|\,\sum_{i=1}^{k}\xi_{i}=0\}$
is defined via pullback: 
\[
\int_{\Gamma_{\lambda}^{k}}f(\xi_{1},\ldots,\xi_{k})d\Gamma_{\lambda}^{k}=\frac{1}{\lambda^{k-1}}\sum_{\xi_{1},\ldots,\xi_{k-1}\in\Z/\lambda}f(\xi_{1},\ldots,\xi_{k-1},-\xi_{1}-\ldots-\xi_{k-1}).
\]
Following \cite[Section 6]{KochTataru2012} we compute by the fundamental
theorem of calculus and \emph{symmetrization} 
\begin{align*}
 & \Vert u(t)\Vert_{H^{a}}^{2}\\
 & =\Vert u_{0}\Vert_{H^{a}}^{2}+C\int_{0}^{t}\int_{\Gamma_{\lambda}^{k+1}}A_{k}(\xi_{1},\dots,\xi_{k+1})\hat{u}(s,\xi_{1})\ldots\hat{u}(s,\xi_{k+1})d\Gamma_{\lambda}^{k+1}ds\\
 & =:\Vert u_{0}\Vert_{H^{a}}^{2}+CR_{k}(u,t),
\end{align*}
where 
\[
A_{k}(\xi_{1},\ldots,\xi_{k+1})=\sum_{i=1}^{k+1}a(\xi_{i})\xi_{i}.
\]

This symmetrization can be viewed as a multilinear version of the
integration by parts in space. As the symbol $\xi\mapsto a(\xi)\xi$
is odd, $A_{k}(\xi_{1},\dots,\xi_{k+1})$ enjoys better bounds than
$a(\xi_{i})\xi_{i}$ on the diagonal set $\Gamma_{\lambda}^{k+1}$.
For instance, when $(\xi_{1},\dots,\xi_{k+1})\in\Gamma_{\lambda}^{k+1}$
satisfies $|\xi_{1}|\sim|\xi_{2}|\gtrsim|\xi_{3}|\gtrsim\cdots\gtrsim|\xi_{k+1}|$,
we have 
\begin{align}
A_{k}(\xi_{1},\dots,\xi_{k+1}) & =a(\xi_{1})\xi_{1}+a(\xi_{2})\xi_{2}+O(a(\xi_{3})|\xi_{3}|)\label{eq:int-by-pts-space-a}\\
 & =a(\xi_{1})(\xi_{1}+\xi_{2})+(a(\xi_{2})-a(-\xi_{1}))\xi_{2}+O(a(\xi_{3})|\xi_{3}|)\nonumber \\
 & =O(a(\xi_{1})|\xi_{3}|),\nonumber 
\end{align}
which is better than the crude bound $a(\xi_{1})|\xi_{1}|$.

The symmetrization by itself does not make use of dispersion. To observe
further smoothing effects from the dispersion, we change to the interaction
picture via $\hat{v}(t,\xi)=e^{it\xi|\xi|}\hat{u}(t,\xi)$ to find
\begin{multline}
R_{k}(u,t)\\
=\int_{0}^{t}\int_{\Gamma_{\lambda}^{k+1}}e^{-is\Omega_{k}(\xi_{1},\ldots,\xi_{k+1})}A_{k}(\xi_{1},\ldots,\xi_{k+1})\hat{v}(s,\xi_{1})\ldots\hat{v}(s,\xi_{k+1})d\Gamma_{\lambda}^{k+1}ds,\label{eq:R_k}
\end{multline}
where $\Omega_{k}$ is the $k$\emph{-resonance} \emph{function }
\[
\Omega_{k}(\xi_{1},\ldots,\xi_{k+1})=\sum_{i=1}^{k+1}\xi_{i}|\xi_{i}|.
\]
For frequency interactions such that $\Omega_{k}(\xi_{1},\dots,\xi_{k+1})$
is large, i.e., the \emph{nonresonant interactions}, we can use that
$e^{is\Omega_{k}}$ rapidly oscillates by writing $e^{-is\Omega_{k}}=(-i\Omega_{k})^{-1}\partial_{s}e^{-is\Omega_{k}}$
and integrate by parts in time. See for instance Lemma \ref{lem:Contribution-bOmega}.

In previous works on the Benjamin-Ono and the modified Benjamin-Ono
equation (see \cite{Schippa2017MBO,Schippa2020DispersionGeneralizedBenjaminOno}
for details), we have 
\begin{align*}
|\Omega_{2}(\xi_{1},\xi_{2},\xi_{3})| & \sim|\xi_{2}\xi_{3}|,\\
\left|\frac{A_{k}(\xi_{1},\ldots,\xi_{k+1})}{\Omega_{k}(\xi_{1},\ldots,\xi_{k+1})}\right| & \lesssim\frac{a(\xi_{1})}{|\xi_{1}|}\qquad\forall k\in\{2,3\}.
\end{align*}
In particular, in case of $k=2$, frequency interactions among mean
zero functions are always nonresonant. In case of $k=3$, there are
nontrivial resonant interactions, but the multiplier $A_{k}$ also
vanishes simultaneously.

In case of $k\geq4$, the multiplier $A_{k}$ does not necessarily
vanish for resonant interactions. However, we show that $A_{k}$ specialized
to resonant interaction satisfies better bounds than \eqref{eq:int-by-pts-space-a}.
It turns out (Lemma \ref{lem:decomposition-Ak}) that we have a decomposition
\[
A_{k}=b\Omega_{k}+c,
\]
such that 
\[
|b|\lesssim a(\xi_{1})|\xi_{1}|^{-1}\quad\text{and}\quad|c|\lesssim\big(a(\xi_{1})|\xi_{1}|^{-1}|\xi_{3}|+a(\xi_{3})\big)|\xi_{5}|.
\]

Thanks to \eqref{eq:ComparisonSobolevWeight}, Proposition \ref{prop:EnergyEstimateSolutions}
follows from summing over $N$ the following.
\begin{lem}[Estimate of each $\|u\|_{L_{T}^{\infty}H_{\lambda}^{a_{N}}}^{2}$]
\label{lem:ReductionEnergyEstimates}Assume the hypotheses of Proposition
\ref{prop:EnergyEstimateSolutions}. Let $a_{N}\in S^{s_{2}+s'}$
for $N\gtrsim1$.
\begin{itemize}
\item If $s_{2}>1/2$, there exists $\epsilon=\epsilon(\overline{s})>0$
such that we have the estimates 
\[
\sup_{t\in[0,T]}|R_{k}(u,t)|\lesssim N^{-\epsilon}(\Vert u\Vert_{F_{\lambda}^{\overline{s}(\epsilon)}(T)}^{k+1}+\Vert u\Vert_{F_{\lambda}^{s_{1}+\epsilon,1/2-\epsilon}(T)}^{2k}).
\]
\item If $k=4$ and $s_{2}=1/2$, there exists $\epsilon>0$ such that we
have the estimates 
\[
\sup_{t\in[0,T]}|R_{4}(u,t)|\lesssim\Vert u\Vert_{F_{\lambda}^{a_{N}}(T)}^{2}\Vert u\Vert_{F_{\lambda}^{\overline{s}}(T)}^{3}+N^{-\varepsilon}\|u\|_{F_{\lambda}^{\overline{s}(\epsilon)+s'}(T)}^{2}\Vert u\Vert_{F_{\lambda}^{\overline{s}(\epsilon)}(T)}^{6}.
\]
\end{itemize}
Here, we recall that $\overline{s}=(s_{1},s_{2})$ and $\overline{s}(\epsilon)=(s_{1}+\epsilon,s_{2}-\epsilon)$,
and used the notation 
\begin{equation}
\Vert u\Vert_{F_{\lambda}^{a_{N}}(T)}^{2}=N^{2(s_{2}+s')}\sum_{N^{\prime}\geq N}\sup_{\substack{I\subseteq[0,T],\\
|I|=(N^{\prime})^{-1}
}
}\Vert P_{N^{\prime}}u\Vert_{U_{BO}^{2}(I;L_{\lambda}^{2})}^{2}.\label{eq:F-aN-def}
\end{equation}
\end{lem}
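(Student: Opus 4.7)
My plan is to decompose the multilinear integral $R_k(u,t)$ dyadically in the Fourier variables and handle each regime by combining the symbol decomposition $A_k = b\Omega_k + c$ (Lemma \ref{lem:decomposition-Ak}), time integration by parts on the $b\Omega_k$ piece, and short-time (bilinear) Strichartz estimates on the $c$ piece. Throughout, I relabel the dyadic frequencies so that $N_1 \geq N_2 \geq \cdots \geq N_{k+1}$; the support of $a_N$ forces $N_1 \gtrsim N$, and momentum conservation on $\Gamma^{k+1}_\lambda$ forces $N_1 \sim N_2$. Passing to the interaction picture $\hat v = e^{it\xi|\xi|}\hat u$ is what introduces the oscillating factor $e^{-is\Omega_k}$ that powers the integration by parts.

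The argument splits into the \emph{high-low} regime $N_1 \sim N_2 \gg N_3$ and the \emph{high-high-high} regime $N_1 \sim N_2 \sim N_3$. In the high-low regime the symmetrization identity \eqref{eq:int-by-pts-space-a} already gives $|A_k| \lesssim a(\xi_1)|\xi_3|$, so after subdividing the $N_1, N_2$ supports into $cN_1$-intervals to secure the transversality $||\xi_1|-|\xi_2|| \gtrsim N_1$ (as in the proof of Corollary \ref{cor:HighLowShorttimeEstimates}), one bilinear Strichartz estimate \eqref{eq:ShorttimeV2Estimate} applied to the pair $(P_{N_1}v, P_{N_2}v)$, combined with either a second bilinear estimate or pointwise $L^\infty_{t,x}$ bounds on the remaining factors, absorbs the derivative $|\xi_3|$. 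The Sobolev weights $N_i^{-s_2+\epsilon}$ coming from $F_\lambda^{\overline{s}(\epsilon)}$ then deliver the $N^{-\epsilon}$ gain when $s_2 > 1/2$.

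The main obstacle is the high-high-high regime, where the crude symmetrized bound $|A_k| \lesssim a(\xi_1)|\xi_1|$ is useless. Here I invoke the decomposition $A_k = b\Omega_k + c$ with $|b|\lesssim a(\xi_1)|\xi_1|^{-1}$ and $|c|\lesssim (a(\xi_1)|\xi_1|^{-1}|\xi_3|+a(\xi_3))|\xi_5|$. The $c$-contribution is estimated by one or two bilinear Strichartz estimates on pairs that are transverse after the usual subdivision of $N_1, N_2, N_3$ into $cN_1$-intervals (again following Corollary \ref{cor:HighLowShorttimeEstimates}); the extra factor $|\xi_5|$ is absorbed by a $N_5^{1/2}$-Bernstein loss paired with the $F_\lambda^{\overline{s}}$-weights. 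For the $b\Omega_k$ piece I use $e^{-is\Omega_k} = -(i\Omega_k)^{-1}\partial_s e^{-is\Omega_k}$ and integrate by parts in time; the $\Omega_k$ cancels with $\Omega_k^{-1}$, leaving boundary terms at $s \in \{0,t\}$ (controlled by the $L^\infty_t L^2_x$-energy norm, i.e.\ $E_\lambda^{\overline s+s'}$) and a time-integrated $(k+1)$-linear form in which one of the $\hat u$ factors is replaced by the Fourier transform of $\partial_x(u^k)$. The latter is bounded via the $L^1_t L^2_x$-nonlinear estimate (Lemma \ref{lem:decomp-prod}) together with the short-time nonlinear estimate (Proposition \ref{prop:ShorttimeNonlinearEstimate}), ultimately reducing the whole contribution to products of $F_\lambda^{\overline{s}(\epsilon)}$- and $F_\lambda^{s_1+\epsilon,1/2-\epsilon}$-norms of total degree $k+1$ and $2k$.

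The borderline case $k=4$, $s_2 = 1/2$ is the most delicate and requires the weaker first term in the statement. In the maximally resonant sub-regime $N_1 \sim N_2 \sim N_3 \sim N_4 \sim N_5$ of the $b$-piece, integration by parts is unable to manufacture a power $N^{-\epsilon}$; the boundary contribution reproduces two copies of $\|P_{\sim N}u\|_{L^\infty_t L^2_x}$ (hence the factor $\|u\|_{F_\lambda^{a_N}(T)}^2$) multiplied by three further copies of $u$ controlled in $F_\lambda^{\overline s}$. Accepting this term is harmless: by the a priori bootstrap for solutions $\|u\|_{F_\lambda^{\overline s}(T)}$ is small, so this term can be absorbed in the left-hand side of Proposition \ref{prop:EnergyEstimateSolutions} after summing over $N$. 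All other sub-regimes carry a genuine $N^{-\epsilon}$ gain and are controlled by the second term on the right-hand side.
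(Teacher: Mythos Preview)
Your overall plan---decompose $A_k=b\Omega_k+c$, integrate by parts in time on the $b\Omega_k$ piece, and estimate the $c$ piece via short-time Strichartz---matches the paper's strategy. However, there is a genuine gap in your treatment of the high--low regime $N_1\sim N_2\gg N_3$, and a misattribution in the endpoint case.

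\textbf{The high--low regime cannot be handled by a direct Strichartz argument.} You propose to skip integration by parts in time here, using only the symmetrized bound $|A_k|\lesssim a(\xi_1)|\xi_3|$ and bilinear Strichartz. Two things go wrong. First, the pair $(P_{N_1}u,P_{N_2}u)$ you name is \emph{not} transverse: on $\Gamma_\lambda^{k+1}$ with $N_1\sim N_2\gg N_3$ one has $\xi_1+\xi_2=O(N_3)$, hence $||\xi_1|-|\xi_2||=O(N_3)\ll N_1$, and no subdivision into $cN_1$-intervals can manufacture the separation $||\xi_1|-|\xi_2||\gtrsim N_1$. The reference to Corollary~\ref{cor:HighLowShorttimeEstimates} does not help, since that corollary concerns three comparable high frequencies, not two. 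Second, even after repairing the pairing (applying bilinear Strichartz to genuine high--low pairs such as $(N_1,N_3)$ and $(N_2,N_4)$), the direct estimate yields a multiplier of size $a(N_1)N_3\sim N^{2(s_2+s')}N_3$ times a multilinear factor of order $\prod_{i\ge5}N_i^{1/2}$. Distributing Sobolev weights and summing, the worst case $N_1\sim N$ leaves a factor $N^{1-s_2+O(\epsilon)}$, which is a \emph{positive} power of $N$ for $1/2<s_2\le1$ and therefore does not give the required $N^{-\epsilon}$. The paper avoids this by noting that the high--low regime is entirely nonresonant ($|\Omega_k|\sim N_1N_3$, Case~A of Lemma~\ref{lem:decomposition-Ak}, where $c\equiv0$), so it is absorbed into the $b\Omega_k$ piece and treated by integration by parts in time; the resulting boundary multiplier $a(N_1)N_1^{-1}$ is smaller than your direct multiplier by a factor $N_1N_3^{-1}\cdot N_3= N_1$, which is exactly what restores summability down to $s_2>1/2$.

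\textbf{Endpoint $k=4$, $s_2=1/2$.} You locate the non-decaying term $\Vert u\Vert_{F_\lambda^{a_N}}^2\Vert u\Vert_{F_\lambda^{\overline s}}^3$ in the \emph{boundary} of the $b$-piece for $N_1\sim\cdots\sim N_5$. But in that fully resonant block Lemma~\ref{lem:decomposition-Ak} sets $b\equiv0$ (Case~C, Subcase~II), so there is no $b$-piece there at all. In the paper this term arises from the $c$-contribution (Lemma~\ref{lem:Contribution-c}), where the bound $|c|\lesssim a(N_1)N_5$ and four $L_t^8L_x^4$-Strichartz estimates on $N_1,\ldots,N_4$ produce precisely $\Vert u\Vert_{F_\lambda^{a_N}}^2\Vert u\Vert_{F_\lambda^{\overline s}}^3$ without any $N^{-\epsilon}$ gain. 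The $b\Omega_k$ piece, by contrast, does carry an $N^{-\epsilon}$ gain even at $s_2=1/2$ (Lemma~\ref{lem:Contribution-bOmega}).
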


Henceforth, we focus on showing Lemma \ref{lem:ReductionEnergyEstimates}.
In Section \ref{subsec:decomposition-Ak}, we give details of the
above decomposition of $A_{k}$. In Section \ref{subsec:Contribution-bOmega},
the contribution of $b\Omega_{k}$ in $R_{k}(u,t)$ is handled in
a favorable way through integration by parts. In Section \ref{subsec:Contribution-c},
the contribution of $c$ in $R_{k}(u,t)$ is estimated. We do not
integrate by parts in time, but we can use the bound of $c$ in \eqref{eq:SymbolEstimate-c},
which is better than the integration by parts in space bound \eqref{eq:int-by-pts-space-a}.

\subsection{\label{subsec:decomposition-Ak}Decomposition of $A_{k}$}

The aim of this subsection is to detail the decomposition of $A_{k}$:
Lemma \ref{lem:decomposition-Ak}.

Let us introduce some notations. Let $N_{1},\dots,N_{k+1}\in2^{\Z}\cap[\lambda^{-1},\infty)$.
For $\pm_{1},\dots,\pm_{k+1}\in\{+,-\}$, define 
\begin{align*}
D_{\pm_{1}N_{1},\dots,\pm_{k+1}N_{k+1}} & :=\{(\xi_{1},\dots,\xi_{k+1})\in\Z^{k+1}/\lambda:\\
 & \quad\pm_{i}\xi_{i}\geq0,\ |\xi_{i}|\sim N_{i},i\in\{1,\dots,k+1\}\}
\end{align*}
with the exception to take $|\xi_{i}|\lesssim N_{i}$ for $N_{i}=\lambda^{-1}$.
We then define the set $D_{N_{1},\dots,N_{k+1}}$ relevant to the
frequency interactions: 
\begin{align*}
D_{N_{1},\dots,N_{k+1}}:=\bigcup\Big\{ D_{\pm_{1}N_{1},\dots,\pm_{k+1}N_{k}} & :D_{\pm_{1}N_{1},\dots,\pm_{k+1}N_{k}}\cap\Gamma_{\lambda}^{k+1}\neq\emptyset,\\
 & \qquad\pm_{1},\dots,\pm_{k+1}\in\{+,-\}\Big\}.
\end{align*}
In other words, $D_{N_{1},\dots,N_{k+1}}$ is the union of rectangles
on which nontrivial frequency interactions can occur.
\begin{lem}[Decomposition of $A_{k}$]
\label{lem:decomposition-Ak}Fix $a=a_{N}\in S^{s_{2}+s'}$ for $N\gtrsim1$.
Let $N_{1}\sim N_{2}\gtrsim\dots\gtrsim N_{k+1}$. Then, there exist
$b$ and $c$ defined on $D_{N_{1},\dots,N_{k+1}}$ satisfying the
following:
\begin{enumerate}
\item (Decomposition of $A_{k}$ on $\Gamma_{\lambda}^{k+1}$) We have 
\[
A_{k}=b\Omega_{k}+c\quad\text{on }\Gamma_{\lambda}^{k+1}\cap D_{N_{1},\dots,N_{k+1}}.
\]
\item (Symbol regularity) For $\alpha_{1},\dots,\alpha_{k+1}\in\N_{0}$,
the symbols $b$ and $c$ satisfy\footnote{In fact, when $a=a_{N}\in S^{s_{2}}$ and $N_{1}\sim N$, then one
should replace $a(N_{1})$ by $N^{2s_{2}}$ because $a(N_{1})$ itself
may vanish. The same remark applies for $a(N_{3})$, and other estimates
having upper bounds in terms of $a(N_{1})$ with $N_{1}\sim N$.} 
\begin{align}
|\partial_{\xi_{1}}^{\alpha_{1}}\dots\partial_{\xi_{k+1}}^{\alpha_{k+1}}b(\xi_{1},\dots,\xi_{k+1})| & \lesssim_{\alpha_{1},\dots,\alpha_{k+1}}a(N_{1})N_{1}^{-1}N^{-\alpha},\label{eq:SymbolEstimate-b}\\
|\partial_{\xi_{1}}^{\alpha_{1}}\dots\partial_{\xi_{k+1}}^{\alpha_{k+1}}c(\xi_{1},\dots,\xi_{k+1})| & \lesssim_{\alpha_{1},\dots,\alpha_{k+1}}(a(N_{1})N_{1}^{-1}N_{3}+a(N_{3}))N_{5}N^{-\alpha},\label{eq:SymbolEstimate-c}
\end{align}
where we denoted $N^{-\alpha}=N_{1}^{-\alpha_{1}}\dots N_{k+1}^{-\alpha_{k+1}}$.
\item (Support property of $c$) $c$ is supported on the region where $N_{3}\sim N_{4}$.
\end{enumerate}
\end{lem}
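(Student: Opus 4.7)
The plan is to construct $b$ and $c$ piecewise on $D_{N_{1},\ldots,N_{k+1}}$, separating the sign components $D_{\pm_{1}N_{1},\ldots,\pm_{k+1}N_{k+1}}$ and the frequency regimes $N_{1}\gg N_{3}$ (high-low) versus $N_{1}\sim N_{3}$ (high-high-high), then to patch the resulting pieces using a smooth sign-dependent cutoff at scale $N_{1}$. On each sign component the absolute values in $A_{k}$ and $\Omega_{k}$ reduce to signed linear expressions, so both symbols are smooth polynomials; throughout I set $\phi(t):=ta(t)$, so that $a(\xi)\xi = \mathrm{sgn}(\xi)\,\phi(|\xi|)$ because $a$ is even.

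Focus first on $N_{1}\sim N_{2}\gg N_{3}$, where the constraint $|\xi_{1}+\xi_{2}|=|\xi_{3}+\cdots+\xi_{k+1}|\lesssim N_{3}$ forces $\xi_{1},\xi_{2}$ to have opposite signs; on the component $\xi_{1}>0>\xi_{2}$, introduce $\mu:=\tfrac{1}{2}(|\xi_{1}|+|\xi_{2}|)\sim N_{1}$ and $\eta:=-(\xi_{1}+\xi_{2})$ (so $|\eta|\lesssim N_{3}$), which give
\[
a(\xi_{1})\xi_{1}+a(\xi_{2})\xi_{2}=\phi(\mu-\eta/2)-\phi(\mu+\eta/2),\qquad \xi_{1}|\xi_{1}|+\xi_{2}|\xi_{2}|=-2\mu\eta.
\]
Define
\[
b_{0}(\mu,\eta):=\frac{\phi(\mu-\eta/2)-\phi(\mu+\eta/2)}{-2\mu\eta},
\]
which extends smoothly through $\eta=0$ (the numerator vanishes to first order there) and satisfies $|b_{0}|\lesssim a(N_{1})/N_{1}$ with the symbol bounds required by \eqref{eq:SymbolEstimate-b} in $\xi_{1},\xi_{2}$. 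In the subregion $N_{3}\sim N_{4}$, take $b=b_{0}$ and set $c=\sum_{i\geq 3}(a(\xi_{i})-b_{0}|\xi_{i}|)\xi_{i}$; the $N_{5}$ gain in \eqref{eq:SymbolEstimate-c} is then extracted by pairing the $i=3$ and $i=4$ terms (allowed since $N_{3}\sim N_{4}$) and using the identity $\xi_{3}+\xi_{4}=\eta-(\xi_{5}+\cdots+\xi_{k+1})$ to telescope the $\xi_{3},\xi_{4}$ contribution against the already-cancelled $\xi_{1},\xi_{2}$ part. In the subregion $N_{3}\gg N_{4}$, where $c=0$ is required, augment $b_{0}$ by a correction $\delta:=\sum_{i\geq 3}(a(\xi_{i})-b_{0}|\xi_{i}|)\xi_{i}/\Omega_{k}$: since $|\Omega_{k}|\sim N_{1}N_{3}$ is nonresonant there, both $|\delta|$ and its derivatives satisfy \eqref{eq:SymbolEstimate-b}, with the apparent loss in the quotient rule compensated by a matching factor of $N_{3}$ in the numerator. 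The remaining case $N_{1}\sim N_{3}$ follows the same template with sign analysis performed on $(\xi_{1},\xi_{2},\xi_{3})$ and the stronger bound $|\Omega_{k}|\gtrsim N_{1}^{2}$, which holds in every admissible sign pattern since $\xi_{1}+\xi_{2}+\xi_{3}=O(N_{4})\ll N_{1}$ precludes three-way cancellation.

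The main obstacle is calibrating $b$ so that it is simultaneously a smooth symbol in the sense of \eqref{eq:SymbolEstimate-b} and leaves a remainder $c$ satisfying the sharp gain \eqref{eq:SymbolEstimate-c} with the $N_{5}$ factor. A naive quotient $b=A_{k}/\Omega_{k}$ has the right pointwise size, but its derivatives deteriorate near the resonance variety, while a naive smooth choice like $b_{0}$ leaves a residual of size $\sim a(N_{1})N_{3}$ rather than the required $(\cdots)N_{5}$. The case split on $N_{3}$ versus $N_{4}$ reconciles the two: when $N_{3}\gg N_{4}$ the resonance is nontrivial and the quotient estimate works because numerator and denominator factor through the same gap variable; when $N_{3}\sim N_{4}$ the pairing symmetry between $\xi_{3}$ and $\xi_{4}$ together with the constraint $\sum_{i\geq 3}\xi_{i}=\eta$ produces the extra $N_{5}$ that a symbol depending only on $\xi_{1},\xi_{2}$ cannot see by itself.
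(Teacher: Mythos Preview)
Your outline has two genuine gaps.

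\textbf{First gap: the choice $b=b_{0}$ in the subregion $N_{1}\gg N_{3}\sim N_{4}$ does not produce the $N_{5}$ factor.} With $b_{0}$ depending only on $(\xi_{1},\xi_{2})$, the residual $c=\sum_{i\ge 3}(a(\xi_{i})-b_{0}|\xi_{i}|)\xi_{i}$ has, for $\xi_{3}>0>\xi_{4}$,
\[
c_{34}=(\xi_{3}+\xi_{4})\Big[q(\xi_{3},\xi_{4})-b_{0}(\xi_{3}-\xi_{4})\Big],\qquad q(\xi_{3},\xi_{4})=\frac{a(\xi_{3})\xi_{3}+a(\xi_{4})\xi_{4}}{\xi_{3}+\xi_{4}},
\]
and the bracket is of size $a(N_{3})+a(N_{1})N_{1}^{-1}N_{3}$. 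The identity $\xi_{3}+\xi_{4}=\eta-\sum_{i\ge 5}\xi_{i}$ only gives $|\xi_{3}+\xi_{4}|\lesssim|\eta|+N_{5}$, and $|\eta|$ can be as large as $N_{3}$; there is nothing left from the $\xi_{1},\xi_{2}$ block to ``telescope'' against, since that block vanishes identically by the definition of $b_{0}$. A concrete check with $a\equiv\mathrm{const}$ on high frequencies shows $|c_{34}|\sim a(N_{3})N_{3}$, which violates \eqref{eq:SymbolEstimate-c} once $N_{5}\ll N_{3}$. The same failure occurs when $\xi_{3},\xi_{4}$ have equal signs. What is actually needed is a further $(\xi_{3},\xi_{4})$–dependent correction to $b$ so that the $\eta$–piece of $c_{34}$ is absorbed back into $b\Omega_{k}$; this is exactly the role of the $q(\xi_{3},\xi_{4})$ term in the paper's choice $b=\dfrac{q(\xi_{3},\xi_{4})-q(\xi_{1},\xi_{2})}{2(\xi_{1}+\xi_{4})}$.

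\textbf{Second gap: the case $N_{1}\sim N_{2}\sim N_{3}\sim N_{4}$ is omitted.} Your nonresonance claim $|\Omega_{k}|\gtrsim N_{1}^{2}$ is justified by $\xi_{1}+\xi_{2}+\xi_{3}=O(N_{4})\ll N_{1}$, which \emph{presupposes} $N_{4}\ll N_{1}$. When $N_{4}\sim N_{1}$, the sign pattern $\xi_{1},\xi_{3}>0$, $\xi_{2},\xi_{4}<0$ is admissible and gives $\Omega_{k}=-2(\xi_{1}+\xi_{4})(\xi_{1}+\xi_{2})+O(N_{3}N_{5})$, which can vanish. This is precisely the resonant regime (the paper's Case~C), where a Koch--Tataru--type normal form $b=b_{4}(\xi_{1},\xi_{2},\xi_{4})$ is required; nothing in your template covers it. You also need to set $b=0$, $c=A_{k}$ when $N_{4}\sim N_{5}$, which is not mentioned.
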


\begin{proof}
For the symbol regularity, we only show the pointwise bounds of $b$
and $c$ on $\Gamma_{\lambda}^{k+1}\cap D_{N_{1},\dots,N_{k+1}}$
for simplicity. The symbol regularity estimates on the larger set
$D_{N_{1},\dots,N_{k+1}}$ follow from deriving the explicit representation
formulas of $b$ and $c$; see for instance \cite[Proposition 6.3]{KochTataru2012}.

We can suppose that $N_{1}\gtrsim1$ as for $N_{1}\ll1$ due to vanishing
property of $a$ on low frequencies $A_{k}\equiv0$, and we can set
$b=c\equiv0$.

We proceed by Case-by-Case analysis.

\textbf{Case A:} $N_{3}\gg N_{4}$.

In this case, there are only nonresonant interactions; we claim that
\begin{equation}
|\Omega_{k}(\xi_{1},\ldots,\xi_{k+1})|\sim N_{1}N_{3}\sim|\Omega_{3}(\xi_{1},\xi_{2},\xi_{3})|.\label{eq:BO-like-nonresonance}
\end{equation}
By otherwise impossible interaction and symmetry, we can suppose that
$\xi_{1}>0$, $\xi_{2}<0$, $\xi_{3}<0$. We compute using $\xi_{1}=-(\xi_{2}+\xi_{3})+O(N_{4})$
\[
\begin{split}\Omega_{k}(\xi_{1},\ldots,\xi_{k+1}) & =\xi_{1}^{2}-\xi_{2}^{2}-\xi_{3}^{2}+O(N_{4}^{2})\\
 & =2\xi_{2}\xi_{3}+2(\xi_{2}+\xi_{3})\cdot O(N_{4})+O(N_{4}^{2})\\
 & =2\xi_{2}\xi_{3}+O(N_{2}N_{4}).
\end{split}
\]
Since $N_{3}\gg N_{4}$ and $N_{1}\sim N_{2}$, we have 
\[
|\Omega_{k}(\xi_{1},\dots,\xi_{k+1})|\sim N_{1}N_{3}\sim|\Omega_{3}(\xi_{1},\xi_{2},\xi_{3})|.
\]
Having settled the claim, we set $b$ and $c$ by 
\[
b:=\frac{A_{k}}{\Omega_{k}}\quad\text{and}\quad c:=0.
\]
The estimate of $b$ follows from \eqref{eq:int-by-pts-space-a} and
\eqref{eq:BO-like-nonresonance}.

\textbf{Case B:} $N_{1}\sim N_{2}\gg N_{3}\sim N_{4}$.

Due to otherwise impossible interaction and symmetry, we can suppose
that $\xi_{1}>0$, $\xi_{2}<0$. Choose $\ell\in\{4,\dots,k+1\}$
such that $N_{1}\sim N_{2}\gg N_{3}\sim N_{\ell}\gg N_{\ell+1}$ (set
$N_{k+2}=0$ when $\ell=k+1$).

\underline{Subcase I:} $\xi_{3}\dots,\xi_{\ell}$ have the same sign.
By $\xi_{1}>0$ and $\xi_{2}<0$, this means $\xi_{3},\ldots,\xi_{l}<0$.
Then, the following computation shows that this case is nonresonant:
\begin{align*}
\Omega_{k} & =\xi_{1}^{2}-\xi_{2}^{2}+O(N_{3}^{2})\\
 & =(\xi_{1}-\xi_{2})(-\xi_{3}-\cdots-\xi_{\ell}+O(N_{\ell+1}))+O(N_{3}^{2})\sim N_{1}N_{3},
\end{align*}
where in the last inequality we used the fact that $\xi_{3},\dots,\xi_{\ell}$
have same sign. Therefore, we set 
\[
b:=\frac{A_{k}}{\Omega_{k}}\quad\text{and}\quad c:=0.
\]
The estimate of $b$ follows from \eqref{eq:int-by-pts-space-a} and
$\Omega_{k}\sim N_{1}N_{3}$.

\underline{Subcase II:} $\xi_{3},\dots,\xi_{\ell}$ have both positive
and negative signs. This case is similar to \cite[Proposition 6.3]{KochTataru2012}.
By symmetry, we may assume $\xi_{3}>0$ and $\xi_{4}<0$. We first
observe that 
\begin{align*}
\Omega_{k} & =\xi_{1}^{2}-\xi_{2}^{2}+\xi_{3}^{2}-\xi_{4}^{2}+O(N_{5}^{2})\\
 & =(\xi_{1}-\xi_{2})(\xi_{1}+\xi_{2})+(\xi_{3}-\xi_{4})(-\xi_{1}-\xi_{2}+O(N_{5}))+O(N_{5}^{2})\\
 & =(\xi_{1}-\xi_{2}-\xi_{3}+\xi_{4})(\xi_{1}+\xi_{2})+O(N_{3}N_{5})\\
 & =-2(\xi_{1}+\xi_{4})(\xi_{1}+\xi_{2})+O(N_{3}N_{5}).
\end{align*}
We define a smooth function (because $a$ is even and smooth) 
\[
q(\xi,\eta):=\frac{a(\xi)\xi+a(\eta)\eta}{\xi+\eta}\text{ such that }|q(\xi,\eta)|\lesssim a(\max\{|\xi|,|\eta|\})
\]
and observe 
\begin{align*}
a(\xi_{1})\xi_{1}+a(\xi_{2})\xi_{2} & =q(\xi_{1},\xi_{2})(\xi_{1}+\xi_{2})=-\frac{q(\xi_{1},\xi_{2})}{2(\xi_{1}+\xi_{4})}(\Omega_{k}+O(N_{3}N_{5})),\\
a(\xi_{3})\xi_{3}+a(\xi_{4})\xi_{4} & =-q(\xi_{3},\xi_{4})(\xi_{1}+\xi_{2}+O(N_{5}))=\frac{q(\xi_{3},\xi_{4})}{2(\xi_{1}+\xi_{4})}(\Omega_{k}+O(N_{1}N_{5})).
\end{align*}
Thus 
\begin{align*}
A_{k} & =a(\xi_{1})\xi_{1}+\cdots+a(\xi_{4})\xi_{4}+O(a(N_{5})N_{5})\\
 & =\frac{q(\xi_{3},\xi_{4})-q(\xi_{1},\xi_{2})}{2(\xi_{1}+\xi_{4})}\Omega_{k}+O(a(N_{1})N_{1}^{-1}N_{3}N_{5}+a(N_{3})N_{5}).
\end{align*}
Therefore, we set 
\[
b:=\frac{q(\xi_{3},\xi_{4})-q(\xi_{1},\xi_{2})}{2(\xi_{1}+\xi_{4})}\quad\text{and}\quad c:= A_{k}-b\Omega_{k}.
\]
The estimate $|b|\lesssim a(N_{1})N_{1}^{-1}$ follows from $|q(\xi_{3},\xi_{4})|,|q(\xi_{1},\xi_{2})|\lesssim a(N_{1})$.
The estimate for $c$ is immediate.

\textbf{Case C:} $N_{1}\sim N_{2}\sim N_{3}\sim N_{4}$.

\underline{Subcase I:} $N_{4}\gg N_{5}$. Due to otherwise impossible
interactions and symmetry, it suffices to consider the following two
cases: the four highest frequencies contain (i) three positive and
one negative frequencies (say $\xi_{1},\xi_{2},\xi_{3}>0$ and $\xi_{4}<0$);
(ii) two positive and two negative frequencies (say $\xi_{1},\xi_{3}>0$
and $\xi_{2},\xi_{4}<0$).

The case (i) is nonresonant: 
\begin{align*}
\Omega_{k} & =\xi_{1}^{2}+\xi_{2}^{2}+\xi_{3}^{2}-\xi_{4}^{2}+O(N_{5}^{2})\\
 & =\xi_{1}^{2}+\xi_{2}^{2}+\xi_{3}^{2}-(\xi_{1}+\xi_{2}+\xi_{3}+O(N_{5}))^{2}+O(N_{5}^{2})\\
 & =-2(\xi_{1}\xi_{2}+\xi_{2}\xi_{3}+\xi_{3}\xi_{1})+O(N_{1}N_{5})\sim N_{1}^{2},
\end{align*}
where in the last inequality we used the fact that $\xi_{1},\xi_{2},\xi_{3}$
have same sign. Therefore, we set 
\[
b:=\frac{A_{k}}{\Omega_{k}}\quad\text{and}\quad c:=0.
\]
The estimate $|b|\lesssim a(N_{1})N_{1}^{-1}$ follows from $|A_{k}|\lesssim a(N_{1})N_{1}$
and $\Omega_{k}\sim N_{1}^{2}$.

The case (ii) is similar to \cite[Proposition 6.3]{KochTataru2012}. We note that $a(\xi_5) \xi_5 + \ldots + a(\xi_{k+1}) \xi_{k+1}$ can be absorbed into $c$. In the following we focus on $a(\xi_1) \xi_1 + \ldots + a(\xi_4) \xi_4$. The decomposition of \cite[Proposition 6.3]{KochTataru2012} provides an extension off the diagonal $\{ \xi_1+ \xi_2 + \xi_3+ \xi_4 = 0 \}$:
\begin{equation}
\label{eq:DecompositionKochTataru}
a(\xi_1) \xi_1+ \ldots + a(\xi_4) \xi_4 = b_4(\xi_1,\xi_2,\xi_4)(\xi_1^2 - \xi_2^2 + \xi_3^2 - \xi_4^2) 
\end{equation}
on $\{ \xi_1 + \xi_2 + \xi_3 + \xi_4 = 0 \}$. The function satisfies the bounds
\begin{equation*}
|b_4(\xi_1,\ldots,\xi_4)| \lesssim a(N_1) N_1^{-1}.
\end{equation*}
In the present context, we have
\begin{equation*}
\begin{split}
b_4(\xi_1,\xi_2,\xi_4) &= \frac{a(\xi_1) \xi_1 + a(\xi_2) \xi_2}{2(\xi_1+\xi_2)(\xi_1+\xi_4)} + \frac{a(-\xi_1-\xi_2-\xi_4)(-\xi_1-\xi_2-\xi_4) + a(\xi_4)\xi_4}{2(\xi_1+\xi_2)(\xi_1+\xi_4)} \\
&= \frac{q(\xi_1,\xi_2)}{2(\xi_1+\xi_4)} - \frac{q(\xi_4,-\xi_1-\xi_2-\xi_4)}{2(\xi_1+\xi_4)} \\
&= \frac{q(-\xi_1,-\xi_2) - q(-\xi_1+(\xi_1+\xi_4),-\xi_2-(\xi_1+\xi_4))}{2(\xi_1+\xi_4)}.
\end{split}
\end{equation*}

We want to use this decomposition for higher nonlinearity to find an off-diagonal extension: Let $\xi_3^* = \xi_3 + \xi_5 + \ldots + \xi_{k+1} = \xi_3 + \eta$ such that
\begin{equation*}
a(\xi_1) \xi_1 + a(\xi_2) \xi_2 + a(\xi_3) \xi_3 + a(\xi_4) \xi_4 = a(\xi_1) \xi_1 + a(\xi_2) \xi_2 + a(\xi_3^*) \xi_3^* + a(\xi_4) \xi_4 + f(\xi_1,\ldots,\xi_{k+1})
\end{equation*}
with $|f(\xi_1,\ldots,\xi_{k+1})| \lesssim a(N_1) N_5$ by Taylor's formula. We apply the decomposition \eqref{eq:DecompositionKochTataru} to find
\begin{equation*}
a(\xi_1) \xi_1 + a(\xi_2) \xi_2 + a(\xi_3^*) \xi_3^* + a(\xi_4) \xi_4  = b_4(\xi_1,\xi_2,\xi_4) (\xi_1^2 - \xi_2^2 + (\xi_3^*)^2 - \xi_4^2)
\end{equation*}
on $\{ \xi_1 + \xi_2 + \xi_3^* + \xi_4 = 0 \}$. We invert the substitution to find
\begin{equation*}
\begin{split}
\; &= b_4(\xi_1,\xi_2,\xi_4)(\xi_1^2 - \xi_2^2 + \xi_3^2 - \xi_4^2 + 2 \xi_3 \eta + \eta^2) \\
\; &= b_4(\xi_1,\xi_2,\xi_4)(\Omega_{k}) + g(\xi_1,\ldots,\xi_{k+1}).
\end{split}
\end{equation*}
Conclusively,
\begin{equation*}
\begin{split}
\sum_{j=1}^{k+1} a(\xi_j) \xi_j &= b_4(\xi_1,\xi_2,\xi_4) \Omega_{k+1} + g(\xi_1,\ldots,\xi_{k+1}) + f(\xi_1,\ldots,\xi_{k+1}) + \sum_{j=5}^{k+1} a(\xi_j) \xi_j \\
&= b(\xi_1,\xi_2,\xi_4) \Omega_{k} + c(\xi_1,\ldots,\xi_{k+1})
\end{split}
\end{equation*}
provides an off-diagonal extension with the claimed size and regularity.


\underline{Subcase II:} $N_{4}\sim N_{5}$. In this case, we set
\[
b:=0\quad\text{and}\qquad c:= A_{k}.
\]
The estimate of $c$ follows from the crude estimate $|A_{k}|\lesssim a(N_{1})N_{1}$.

This completes the proof.
\end{proof}

\subsection{\label{subsec:Contribution-bOmega}Contribution of $b\Omega_{k}$}

The goal of this subsection is to estimate the contribution of $b\Omega_{k}$
in $R_{k}(u,t)$: Lemma \ref{lem:Contribution-bOmega}. We integrate
by parts in time. The case $k=3$ is detailed in \cite{Schippa2017MBO}.
Here we deal with general $k$.
\begin{lem}[Contribution of $b\Omega_{k}$]
\label{lem:Contribution-bOmega}Let $T\in(0,1]$ and $0<s_{1}<1/2\leq s_{2}$.
For sufficiently small $\epsilon>0$, we find 
\begin{multline}
\sup_{t\in[0,T]}\sum_{N_{1}\sim N_{2}\gtrsim\cdots\gtrsim N_{k+1}}\bigg|\int_{0}^{t}\int_{\Gamma_{\lambda}^{k+1}}b(\xi_{1},\dots,\xi_{k+1})\Omega_{k}(\xi_{1},\dots,\xi_{k+1})\\
\times\prod_{i=1}^{k+1}\chi_{N_{i}}(\xi_{i})\hat{u}(s,\xi_{i})\,d\Gamma_{\lambda}^{k+1}ds\bigg|\lesssim N^{-\epsilon}\|u\|_{F_{\lambda}^{\overline{s}(\epsilon)+s'}}^{2}(\|u\|_{F_{\lambda}^{s_{1}+\epsilon,1/2-\epsilon}}^{k-1}+\|u\|_{F_{\lambda}^{s_{1}+\epsilon,1/2-\epsilon}}^{2k-2}),\label{eq:Contribution-bOmega-LHS}
\end{multline}
where we recall that $\overline{s}=(s_{1},s_{2})$ and $\overline{s}(\epsilon)=(s_{1}+\epsilon,s_{2}-\epsilon)$.
\end{lem}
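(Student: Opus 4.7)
The plan is to exploit the oscillation of $e^{-is\Omega_k}$ via integration by parts in time. Pass to the interaction representation $\hat{v}(s,\xi) = e^{is\xi|\xi|}\hat{u}(s,\xi)$, so that the integrand in \eqref{eq:R_k} becomes $e^{-is\Omega_k}A_k\prod\hat{v}(s,\xi_i)$. For the $b\Omega_k$-portion, use $e^{-is\Omega_k}\Omega_k = i\partial_s e^{-is\Omega_k}$ and integrate by parts in $s$ over $[0,t]$. This produces a boundary contribution
$$i\int_{\Gamma_\lambda^{k+1}} b\Big(\prod_{i=1}^{k+1}\hat{u}(t,\xi_i) - \prod_{i=1}^{k+1}\hat{u}(0,\xi_i)\Big)d\Gamma_\lambda^{k+1},$$
together with $k+1$ error terms in which $\partial_s$ falls on $\hat{v}(s,\xi_j)$. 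Using \eqref{eq:GeneralizedBenjaminOnoEquation} one has $\partial_s\hat{v}(s,\xi_j) = e^{is\xi_j|\xi_j|}(\mp i\xi_j)\widehat{u^k}(s,\xi_j)$, and on $\Gamma_\lambda^{k+1}$ the phases collapse, so each error term becomes a $2k$-linear spacetime integral
$$\mp i\int_0^t\int_{\Gamma_\lambda^{k+1}} b(\xi_1,\ldots,\xi_{k+1})\,\xi_j\,\widehat{u^k}(s,\xi_j)\prod_{i\ne j}\hat{u}(s,\xi_i)\,d\Gamma_\lambda^{k+1}ds.$$

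For the boundary contribution, the symbol estimate \eqref{eq:SymbolEstimate-b} implies that $b$ acts as a multilinear Fourier multiplier of operator norm $\lesssim a(N_1)N_1^{-1}$. Hölder in space with the two highest-frequency factors in $L_\lambda^2$ and the remaining $k-1$ factors in $L_\lambda^\infty$ (traded via Bernstein for $N_i^{1/2}\|\cdot\|_{L_\lambda^2}$), combined with the embedding of the short-time spaces into $L_t^\infty H_\lambda^{\overline{s}(\epsilon)+s'}$, bounds the $(N_1,\ldots,N_{k+1})$-piece by a constant times $a(N_1)/N_1$ times $\|P_{N_1}u\|_{L_t^\infty L_\lambda^2}\|P_{N_2}u\|_{L_t^\infty L_\lambda^2}\prod_{i\geq 3}N_i^{1/2}\|P_{N_i}u\|_{L_t^\infty L_\lambda^2}$. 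The two high-frequency factors are absorbed into $\|u\|_{F_\lambda^{\overline{s}(\epsilon)+s'}}^2$ and the remaining $k-1$ factors into $\|u\|_{F_\lambda^{s_1+\epsilon,1/2-\epsilon}}^{k-1}$ after dyadic summation; since $a(N_1)/N_1 \sim N_1^{2(s_2+s')-1}$ is smaller than the weight $N_1^{2(s_2-\epsilon+s')}$ absorbed into the $F$-norm by a factor $N_1^{-1+2\epsilon}$, we gain the required $N^{-\epsilon}$ from the constraint $N_1 \gtrsim N$.

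For the error terms, partition $[0,t]$ into $O(N_1 T)$ subintervals of length $N_1^{-1}$ to match the short-time $F$-norms. On each subinterval, use Hölder in $t$ together with the symbol bound on $b$: place the factor built from $\widehat{u^k}(s,\xi_j)$ into $L_t^1 L_\lambda^2$ via Lemma \ref{lem:shorttime-L1L2}, the two highest remaining factors into $L_t^2 L_\lambda^2$ via the short-time bilinear Strichartz estimate (Proposition \ref{prop:ShorttimeBilinearBenjaminOnoStrichartzEstimate}), and the rest into $L_{t,x}^\infty$ via Proposition \ref{prop:ShorttimeLinearStrichartz} together with Bernstein. The $N_1$ loss from summing over subintervals is compensated by the $N_1^{-1}$ in the bound of $b$, producing $N^{-\epsilon}\|u\|_{F_\lambda^{\overline{s}(\epsilon)+s'}}^2\|u\|_{F_\lambda^{s_1+\epsilon,1/2-\epsilon}}^{2k-2}$ after dyadic summation, for sufficiently small $\epsilon = \epsilon(\overline{s}) > 0$.

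The main obstacle is controlling the dyadic summation for the error terms when $k$ is large, since the number of low-frequency factors placed in $L_{t,x}^\infty$ via Bernstein grows with $k$, each carrying an $N_i^{1/2}$-loss that must be absorbed by the $F_\lambda^{s_1+\epsilon,1/2-\epsilon}$-norm. The $N_1^{-1}$ gain from $b$ must cover both the $N_1$-loss from the time partition and all residual Bernstein losses; verifying this requires the room afforded by relaxing $\overline{s}$ to $\overline{s}(\epsilon)$ and the support restriction of $b$ to $N_1 \gtrsim N$.
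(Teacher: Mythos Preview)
Your treatment of the boundary term is essentially correct (the slip $a(N_1)/N_1 \sim N_1^{2(s_2+s')-1}$ is harmless: in fact $a_N(\xi)=N^{2(s_2+s')}$ is constant for $|\xi|\ge N/2$, but since $N_1\gtrsim N$ your conclusion still holds).

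The genuine gap is in the spacetime error terms when $\partial_s$ falls on one of the two highest-frequency factors, $j\in\{1,2\}$. There $|\xi_j|\sim N_1$, so after placing $\xi_j\widehat{u^k}(\xi_j)=\widehat{P_{N_j}\partial_x(u^k)}$ into $L^1_tL^2_x$ and summing over the $O(N_1)$ subintervals, the net multiplier is $|b|\cdot N_1\lesssim a(N_1)N_1^{-1}\cdot N_1=N^{2(s_2+s')}$, with \emph{no} residual negative power of $N_1$. Your proposed bilinear Strichartz gain on a pair of the remaining $k$ linear factors cannot be combined with the $L^1_tL^2_x$ placement of the nonlinearity: the time H\"older exponents $1/1+1/2+1/\infty+\cdots>1$ do not close. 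Without that extra gain, the dyadic sum over $N_1\gtrsim N$ in the regime $N_3\ll N_1$ produces a positive power of $N$, and the claimed $N^{-\epsilon}$ fails.

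The missing idea is an \emph{integration by parts in space} (symmetrization), carried out in Lemma~\ref{lem:int-by-pts-time-apriori}: one writes the $j=1$ and $j=2$ contributions together as a single $\Gamma_\lambda^{2k}$-integral and exploits the symmetry in the output variable to replace $\xi_j\sim N_1$ by $\xi_1+\xi_2=-(\xi_3+\cdots+\xi_{k+1})=O(N_3)$. This converts the effective multiplier from $a(N_1)$ down to $a(N_1)N_1^{-1}N_3$. The resulting $2k$-linear spacetime integral $\int_0^t\int P_{N_j}(u^k)\prod_{i\ne j}P_{N_i}u\,dxds$ is then estimated \emph{as a whole} via (bi)linear Strichartz on all $2k$ factors at once (Lemma~\ref{lem:MultilinearSpaceTimeEstimate}), rather than by isolating the $u^k$-block in $L^1_tL^2_x$; this is what allows the time H\"older to close and the dyadic sum to converge with the stated $N^{-\epsilon}$ gain.
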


\begin{rem}
We remark that $s_{2}=1/2$ is included in Lemma \ref{lem:Contribution-bOmega}.
Moreover, it is possible to lower the threshold for $s_{2}$ slightly.
Thus the contribution of $b\Omega_{k}$ is \emph{not} the source for
the restriction $s_{2}>1/2$ in our energy estimate (Proposition \ref{prop:EnergyEstimateSolutions}).
\end{rem}

We first estimate each Littlewood-Paley piece.
\begin{lem}[Integration by parts in time]
\label{lem:int-by-pts-time-apriori}Let $T\in(0,1]$. We have the
following integration by parts in time estimate: 
\begin{align*}
 & \sup_{t\in[0,T]}\bigg|\int_{0}^{t}\int_{\Gamma_{\lambda}^{k+1}}b(\xi_{1},\dots,\xi_{k+1})\Omega_{k}(\xi_{1},\dots,\xi_{k+1})\prod_{i=1}^{k+1}\chi_{N_{i}}(\xi_{i})\hat{u}(s,\xi_{i})\,d\Gamma_{k+1}ds\bigg|\\
 & \lesssim a(N_{1})N_{1}^{-1}\Big(\prod_{i=3}^{k+1}N_{i}^{1/2}\Big)\Big(\prod_{i=1}^{k+1}\|P_{N_{i}}u\|_{L_{t}^{\infty}([0,T],L_{\lambda}^{2})}\Big)\\
 & \quad+a(N_{1})N_{1}^{-1}N_{3}\sup_{t\in[0,T]}\big|\int_{0}^{t}\int_{\lambda\T}P_{N_{j}}(u^{k})\prod_{i\neq j}P_{N_{i}}udxds\big|.
\end{align*}
\end{lem}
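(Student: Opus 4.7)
The plan is integration by parts in time. Passing to the interaction picture via $\hat{v}(s,\xi) = e^{is\xi|\xi|}\hat{u}(s,\xi)$, the left-hand side of the claimed bound rewrites as
$$\sup_{t\in[0,T]}\bigg|\int_0^t\int_{\Gamma_\lambda^{k+1}}e^{-is\Omega_k(\xi)}\,b(\xi)\Omega_k(\xi)\prod_{i=1}^{k+1}\chi_{N_i}(\xi_i)\hat{v}(s,\xi_i)\,d\Gamma_\lambda^{k+1}ds\bigg|.$$
I then apply the identity $e^{-is\Omega_k}\Omega_k = i\partial_s e^{-is\Omega_k}$ and integrate by parts in $s$. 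This produces boundary contributions at $s = 0$ and $s = t$ together with a bulk term in which $\partial_s$ falls onto one of the factors $\hat{v}(\xi_j)$.

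For the boundary contributions, the oscillating phases collapse to recover $\prod_i \hat{u}(t,\xi_i)$ (and $\prod_i \hat{u}(0,\xi_i)$). Using the pointwise bound $|b|\lesssim a(N_1)N_1^{-1}$ from Lemma \ref{lem:decomposition-Ak}, Plancherel, and H\"older's inequality in $x$ placing $P_{N_1}u,P_{N_2}u$ in $L^2_\lambda$ and the $k-1$ remaining factors in $L^\infty_\lambda$, followed by Bernstein's inequality $\|P_{N_i}u\|_{L^\infty_\lambda}\lesssim N_i^{1/2}\|P_{N_i}u\|_{L^2_\lambda}$ on the lower-frequency factors $(i\geq 3)$, we obtain exactly the first term on the right-hand side, with the product $\prod_{i=3}^{k+1}N_i^{1/2}$ arising from the Bernstein losses.

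For the bulk contribution, I substitute the evolution equation in the form $\partial_s\hat{v}(s,\xi_j) = \mp i\xi_j e^{is\xi_j|\xi_j|}\widehat{u^k}(s,\xi_j)$, which follows from \eqref{eq:GeneralizedBenjaminOnoEquation}. The identity $\xi_j|\xi_j|+\sum_{i\neq j}\xi_i|\xi_i|=\Omega_k$ causes the remaining phases to cancel completely, leaving
$$\pm i\sum_{j=1}^{k+1}\int_0^t\int_{\Gamma_\lambda^{k+1}}b(\xi)\xi_j\widehat{u^k}(\xi_j)\prod_{i\neq j}\hat{u}(\xi_i)\prod\chi_{N_i}(\xi_i)\,d\Gamma_\lambda^{k+1}ds,$$
which by Plancherel is a spacetime multilinear form with kernel $b$. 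Pulling out $|b|\lesssim a(N_1)N_1^{-1}$ pointwise and $|\xi_j|\lesssim N_j$ controls each summand by $a(N_1)N_1^{-1}N_j$ times $\sup_t|\int_0^t\int_{\lambda\T}P_{N_j}(u^k)\prod_{i\neq j}P_{N_i}u\,dxds|$.

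The main obstacle is promoting the coefficient $N_j$ to $N_3$ for the indices $j\in\{1,2\}$, where naively $N_j\sim N_1\gg N_3$. I expect this to be handled by combining the $j=1$ and $j=2$ contributions and exploiting the zero-sum relation $\xi_1+\xi_2 = -(\xi_3+\cdots+\xi_{k+1})$ on $\Gamma_\lambda^{k+1}$, which forces $|\xi_1+\xi_2|\lesssim N_3$. Together with the symmetry $b(\xi_1,\xi_2,\dots)=b(\xi_2,\xi_1,\dots)$ inherited from the construction of $b$ in Lemma \ref{lem:decomposition-Ak}, an appropriate symmetrization of the $j=1$ and $j=2$ summands effectively replaces the individual factors $\xi_1,\xi_2$ by the sum $\xi_1+\xi_2$ of size $O(N_3)$, producing the claimed bound. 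For the indices $j\geq 3$ the bound $|\xi_j|\leq N_j\leq N_3$ is direct, so no further argument is needed.
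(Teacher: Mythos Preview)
Your approach matches the paper's: pass to the interaction picture, integrate by parts in $s$, estimate the boundary term by H\"older--Bernstein, and for the bulk term substitute the equation and seek the $O(N_3)$ gain for $j\in\{1,2\}$ by combining these two contributions via the zero-sum constraint. The boundary term and the $j\geq 3$ bulk terms are handled exactly as in the paper.

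There is a gap in your treatment of the $j\in\{1,2\}$ cancellation. Swapping $\xi_1\leftrightarrow\xi_2$ at the $\Gamma_\lambda^{k+1}$ level and invoking $b(\xi_1,\xi_2,\dots)=b(\xi_2,\xi_1,\dots)$ does \emph{not} line up the two summands: after the swap, the $j=2$ term has $\widehat{u^k}(\xi_1)\hat u(\xi_2)$ (matching $j=1$) but carries $\chi_{N_1}(\xi_2)\chi_{N_2}(\xi_1)$ while the $j=1$ term carries $\chi_{N_1}(\xi_1)\chi_{N_2}(\xi_2)$; both end up multiplied by $\xi_1$, not by $\xi_1+\xi_2$, so no $O(N_3)$ factor emerges. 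The symmetry of $b$ alone cannot compensate for the fact that $\widehat{u^k}$ sits in different slots in the two terms.

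The paper repairs this by first expanding $u^k$ into $k$ further factors of $\hat u$ and working on $\Gamma_\lambda^{2k}$. There one writes both $j=1,2$ contributions with a common set of variables: a block sum $\xi_{\mathrm{sum}}=\xi_{k+1}+\dots+\xi_{2k}$ carrying the $u^k$ and a single $\xi_1$ carrying the distinguished $u$. Because all $2k$ factors $\hat u(\xi_i)$ are \emph{identical}, the measure is symmetric in the $k+1$ ``free'' variables $\{\xi_1,\xi_{k+1},\dots,\xi_{2k}\}$, and one may relabel the second term so that the roles of $\xi_1$ and $\xi_{\mathrm{sum}}$ are exchanged. After this relabeling both terms share the same $b(\xi_{\mathrm{sum}},\xi_1,\dots)\chi_{N_1}(\xi_{\mathrm{sum}})\chi_{N_2}(\xi_1)$ and differ only by the coefficients $\xi_{\mathrm{sum}}$ versus $\xi_1$; their sum is $\xi_1+\xi_{\mathrm{sum}}=-(\xi_2+\dots+\xi_k)=O(N_3)$. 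The crucial point is that the cancellation exploits the symmetry of the $2k$ identical $u$-factors, not any symmetry of $b$.
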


\begin{rem}[Separation of variables]
\label{rem:SeparationVariables}In what follows, we need to estimate
the expression of the form $\int_{\Gamma_{\lambda}^{k+1}}b(\xi_{1},\dots,\xi_{k+1})\prod_{i=1}^{k+1}\hat{f_{i}}(\xi_{i})d\Gamma_{\lambda}^{k+1}$.
We will simply estimate this by $\|b\|_{L^{\infty}}\int_{\lambda\T}\prod_{i=1}^{k+1}f_{i}(x)dx$.
By the symbol regularity \eqref{eq:SymbolEstimate-b}, $b(\xi_{1},\dots,\xi_{k+1})$
has a rapidly converging Fourier series, say $\sum b_{\lambda_{1},\dots,\lambda_{k+1}}e^{i(\lambda_{1}\xi_{1}+\cdots+\lambda_{k+1}\xi_{k+1})}$
with rapidly decaying coefficients $b_{\lambda_{1},\dots,\lambda_{k+1}}$,
where the summation ranges over $\lambda_{i}$'s in $N_{i}^{-1}\Z$.
As the modulation on the Fourier space corresponds to the translation
on the physical space, estimating $\int_{\Gamma_{k+1}}b(\xi_{1},\dots,\xi_{k+1})\prod_{i=1}^{k+1}\hat{f_{i}}(\xi_{i})d\Gamma_{\lambda}^{k+1}$
essentially reduces to estimating $\|b\|_{L^{\infty}}\int_{\lambda\T}\prod_{i=1}^{k+1}f_{i}(x)dx$,
as long as our estimates are translation invariant. This can be justified
in this paper because we rely on frequency interactions, and our arguments
do not depend on how functions are distributed in physical space.
For a detailed discussion, see \cite{Taylor1974} and \cite{ChristHolmerTataru2012}
in the context of dispersive equations. 
\end{rem}

\begin{proof}[Proof of Lemma \ref{lem:int-by-pts-time-apriori}]
Changing to the interaction picture by introducing 
\[
\hat{v}(s,\xi):= e^{is|\xi|\xi}\hat{u}(s,\xi),
\]
we need to estimate 
\begin{equation}
\int_{0}^{t}\int_{\Gamma_{\lambda}^{k+1}}e^{-is\Omega_{k}(\xi_{1},\ldots,\xi_{k+1})}b(\xi_{1},\dots,\xi_{k+1})\Omega_{k}(\xi_{1},\dots,\xi_{k+1})\prod_{i=1}^{k+1}\chi_{N_{i}}(\xi_{i})\hat{v}(s,\xi_{i})\,d\Gamma_{\lambda}^{k+1}ds.\label{eq:int-by-pts-time-apriori-temp1}
\end{equation}
Integrating by parts in time using 
\begin{align*}
\partial_{s}e^{-is\Omega_{k}} & =-i\Omega_{k}e^{is\Omega_{k}},\\
\partial_{s}\hat{v}(s,\xi) & =(i\xi)e^{is|\xi|\xi}(u^{k}(s,\cdot))^{\wedge}(\xi),
\end{align*}
we find 
\begin{align*}
\eqref{eq:int-by-pts-time-apriori-temp1} & =\int_{\Gamma_{\lambda}^{k+1}}b(\xi_{1},\dots,\xi_{k+1})\prod_{i=1}^{k+1}\chi_{N_{i}}(\xi_{i})\hat{u}(s,\xi_{i})d\Gamma_{\lambda}^{k+1}\bigg|_{0}^{t}\\
 & \quad+\sum_{j=1}^{k+1}\int_{0}^{t}\int_{\Gamma_{\lambda}^{k+1}}b(\xi_{1},\dots,\xi_{k+1})\prod_{i=1}^{k+1}\chi_{N_{i}}(\xi_{i})\cdot\xi_{j}(u^{k})^{\wedge}(s,\xi_{j})\prod_{i\neq j}\hat{u}(s,\xi_{i})d\Gamma_{\lambda}^{k+1}ds.
\end{align*}
We call the first term the \emph{boundary term} and the second term
the \emph{spacetime term}. We turn to the estimates.

We firstly estimate the boundary term. By Remark \ref{rem:SeparationVariables},
we may replace $b$ by $a(N_{1})N_{1}^{-1}$ and return to the physical
space representation: 
\[
a(N_{1})N_{1}^{-1}\int_{\lambda\T}\prod_{i=1}^{k+1}P_{N_{i}}u\,dx\bigg|_{0}^{t}.
\]
Applying $L_{x}^{2}$ to the highest two frequencies and pointwise
bounds to the remaining ones, the boundary term is estimated by 
\[
a(N_{1})N_{1}^{-1}\Big(\prod_{i=3}^{k+1}N_{i}^{1/2}\Big)\prod_{i=1}^{k+1}\|P_{N_{i}}u\|_{L_{t}^{\infty}([0,T],L_{\lambda}^{2})}.
\]

Next, we estimate the spacetime term. We focus on the $\sum_{j=1}^{2}$-part
of the spacetime term. We rewrite the $\sum_{j=1}^{2}$-part as 
\begin{align*}
 & \int_{0}^{t}\int_{\Gamma_{\lambda}^{2k}}\Big(b(\xi_{sum},\xi_{1},\xi_{2},\dots,\xi_{k})\xi_{sum}\chi_{N_{1}}(\xi_{sum})\chi_{N_{2}}(\xi_{1})\\
 & \quad+b(\xi_{1},\xi_{sum},\xi_{2},\dots,\xi_{k})\xi_{sum}\chi_{N_{1}}(\xi_{1})\chi_{N_{2}}(\xi_{sum})\Big)\Big(\prod_{i=3}^{k+1}\chi_{N_{i}}(\xi_{i-1})\Big)\prod_{i=1}^{2k}\hat{u}(s,\xi_{i})ds,
\end{align*}
where we denote $\xi_{sum}=\xi_{k+1}+\cdots+\xi_{2k}$. We then \emph{integrate
by parts in space} as in \eqref{eq:int-by-pts-space-a}; the following
holds after taking $\int_{\Gamma_{2k}}(\cdot)\prod_{i=3}^{k+1}\chi_{N_{i}}(\xi_{i-1})\prod_{i=1}^{2k}\hat{u}(s,\xi_{i})d\Gamma_{2k}$:
\begin{align*}
 & \phantom{=}b(\xi_{sum},\xi_{1},\xi_{2},\dots,\xi_{k})\xi_{sum}\chi_{N_{1}}(\xi_{sum})\chi_{N_{2}}(\xi_{1})\\
 & \qquad+b(\xi_{1},\xi_{sum},\xi_{2},\dots,\xi_{k})\xi_{sum}\chi_{N_{1}}(\xi_{1})\chi_{N_{2}}(\xi_{sum})\\
 & =b(\xi_{sum},\xi_{1},\xi_{2},\dots,\xi_{k})(\xi_{1}+\xi_{sum})\chi_{N_{1}}(\xi_{sum})\chi_{N_{2}}(\xi_{1})\\
 & =-b(\xi_{sum},\xi_{1},\xi_{2},\dots,\xi_{k})(\xi_{2}+\cdots+\xi_{k})\chi_{N_{1}}(\xi_{sum})\chi_{N_{2}}(\xi_{1})\\
 & =O(N_{3})\cdot b(\xi_{sum},\xi_{1},\xi_{2},\dots,\xi_{k}))\chi_{N_{1}}(\xi_{sum})\chi_{N_{2}}(\xi_{1}),
\end{align*}
where we changed the variables $\xi_{1}\leftrightarrow\xi_{sum}$
and used frequency localization $|\xi_{i-1}|\sim N_{i}$ for $3\leq i\leq k+1$.
For the $\sum_{j=3}^{k+1}$-part, we simply bound $\xi_{j}$ by $N_{3}$.

As a result, the spacetime term is estimated by 
\[
a(N_{1})N_{1}^{-1}N_{3}\bigg|\int_{0}^{t}\int_{\lambda\T}P_{N_{j}}(u^{k})\prod_{i\neq j}P_{N_{i}}u\,dxds\bigg|
\]
by Remark \ref{rem:SeparationVariables}. The proof is completed.
\end{proof}
For $s_{2}>1/2$, we can estimate the spacetime term with $L_{x}^{2}$
in the highest frequencies and pointwise bounds for the remaining
ones. We then give $L_{t}^{1}$ to $P_{N_{j}}(u^{k})$ and apply Lemma
\ref{lem:decomp-prod} with $s'=0$. Then, the spacetime term is estimated
by 
\[
\|u\|_{F_{\lambda}^{s_{1}+\varepsilon,s_{2}-\varepsilon}}^{k-1}a(N_{1})N_{1}^{-1}N_{3}\Big(\prod_{i=3}^{k+1}N_{i}^{1/2}\Big)\prod_{i=1}^{k+1}c_{N_{i}}^{(u,(s_{1}+\varepsilon,s_{2}-\varepsilon))},
\]
with $\epsilon(\overline{s})>0$ sufficiently small. For $s_{2}=1/2$,
this argument fails as it does not allow for the use of multilinear
estimates involving factors from $P_{N_{j}}(u^{k})$ and $P_{N_{i}}u$,
$i\neq j$. This shortcoming is remedied in the following:
\begin{lem}[Estimates of spacetime term]
\label{lem:MultilinearSpaceTimeEstimate} Let $j\in\{1,\dots k+1\}$
and 
\[
R_{2k,j}(N_{1},\ldots,N_{k+1},M_{1},\ldots,M_{k})=\bigg|\int_{0}^{t}\int_{\lambda\T}P_{N_{j}}\big(\prod_{i=1}^{k}P_{M_{i}}u\big)\prod_{i\neq j}P_{N_{i}}udxds\bigg|.
\]
Let $K_{1},\ldots,K_{2k}$ denote a decreasing rearrangement of $M_{1},\ldots,M_{k},N_{i}\,(i\neq j)$
with multiplicity. Then, for $K_{1}\sim K_{2}\gtrsim1$, we find the
following estimate to hold: 
\[
R_{2k,j}(N_{1},\ldots,N_{k+1},M_{1},\ldots,M_{k})\lesssim C(K_{1},\ldots,K_{2k})\prod_{i=1}^{2k}\Vert P_{K_{i}}u\Vert_{F_{\lambda}^{0}},
\]
where 
\begin{equation}
\begin{aligned} & C(K_{1},\dots,K_{2k})\\
 & =\begin{cases}
K_{3}^{1/2,0}K_{4}^{1/2,0}\prod_{i=5}^{2k}K_{i}^{1/2} & \text{if }K_{2}\gg K_{3}\text{ or }K_{1}\sim K_{3}\gg K_{4},\\
K_{1}^{1/2}K_{5}^{1/2,0}\prod_{i=6}^{2k}K_{i}^{1/2} & \text{if }K_{1}\sim K_{4}\gg K_{6}\text{ or }K_{1}\sim K_{5}\gg K_{6},\\
K_{1}\prod_{i=7}^{2k}K_{i}^{1/2} & \text{if }K_{1}\sim K_{6}.
\end{cases}
\end{aligned}
\label{eq:MultilinearSpaceTimeEstimateTemp1}
\end{equation}
\end{lem}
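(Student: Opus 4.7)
The plan is to prove the $2k$-linear spacetime estimate by case analysis on the profile of the decreasing rearrangement $K_1 \geq K_2 \geq \ldots \geq K_{2k}$, mirroring the template of Lemma \ref{lem:short-time-DU2-estimate} but with $2k$ factors instead of $k+1$. In every case I would partition $[0,t]\subseteq[0,1]$ into $\lesssim K_1$ sub-intervals of length $K_1^{-1}$ and bound the integral on each sub-interval by a combination of bilinear short-time Strichartz estimates \eqref{eq:ShorttimeU2Estimate}, the linear $L^6_{t,x}$-Strichartz from \eqref{eq:L6StrichartzEstimateV2}, and pointwise Bernstein bounds on the lower frequencies. The factor $K_1$ lost from summation over sub-intervals is balanced by gains of $K_1^{-1}$ from H\"older-combining two $L^2_{t,x}$ bilinear outputs (using $L^2_{t,x}\cdot L^2_{t,x}\hookrightarrow L^1_{t,x}$) or, in the crudest case, by the $L^6_{t,x}$-Strichartz applied to six factors.

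In Case 1 there is enough magnitude separation to perform two independent bilinear Strichartz pairings among the top four frequencies. When $K_2\gg K_3$ the pairs $(K_1,K_3)$ and $(K_2,K_4)$ are directly separated in magnitude, and when $K_1\sim K_3\gg K_4$ the separation is produced by the frequency sub-partition argument from the proof of Corollary \ref{cor:HighLowShorttimeEstimates}. The two $L^2_{t,x}$-outputs $K_1^{-1/2}K_3^{1/2,0}$ and $K_1^{-1/2}K_4^{1/2,0}$ combine to $L^1_{t,x}$, Bernstein on the remaining $2k-4$ factors contributes $\prod_{i=5}^{2k}K_i^{1/2}$, and summation over sub-intervals yields the claim. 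In Case 3, where $K_1\sim K_6$, I would instead apply $L^6_{t,x}$-Strichartz to the top six factors (their H\"older product lies in $L^1_{t,x}$ with no $K$-loss) and Bernstein to the remaining $2k-6$ factors, producing the crude bound $K_1\prod_{i=7}^{2k}K_i^{1/2}$ after summation.

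Case 2 is the main obstacle: four or five of the top frequencies are comparable to $K_1$ while $K_6\ll K_1$, so at most one of the two desired bilinear pairs can be of ``high-low'' type. The strategy is nonetheless to arrange two bilinear $L^2_{t,x}$ pairings, plus Bernstein on the single remaining high frequency (contributing $K_1^{1/2}$) and on the low frequencies $K_6,\ldots$. When $K_4\sim K_1\gg K_5$, I would pair $(K_1,K_5)$ by the standard high-low bilinear estimate (gain $K_1^{-1/2}K_5^{1/2,0}$) and pair two of $\{K_2,K_3,K_4\}$ via Corollary \ref{cor:HighLowShorttimeEstimates}'s frequency sub-partition trick (gain $K_1^{-1/2}$). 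When $K_5\sim K_1\gg K_6$, both pairings must be ``high-high'' among $\{K_1,\ldots,K_5\}$; the required magnitude separations are extracted from the Fourier support constraint $\sum\xi_i\equiv 0$, which together with $K_6\ll K_1$ forces the five high frequencies to signed-sum to $O(K_6)$ and therefore produces sign imbalances sufficient to yield two magnitude-separated pairs after sub-partition. H\"older-combining the two $L^2_{t,x}$-outputs to $L^1_{t,x}$, applying Bernstein, and summing over the $\sim K_1$ sub-intervals then gives $K_1^{1/2}K_5^{1/2,0}\prod_{i=6}^{2k}K_i^{1/2}$, as claimed. The delicate step is this combinatorial sub-partition argument in the second sub-case, which requires case analysis on the sign configuration of the five high frequencies to guarantee the required separations.
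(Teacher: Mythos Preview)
Your treatment of Cases~1 and~3 is correct and matches the paper's argument.

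The gap is in Case~2. You aim for \emph{two} bilinear Strichartz pairings among the high frequencies (with Bernstein on the leftover high factor), but the underlying combinatorial claim---that the convolution constraint always yields two disjoint magnitude-separated pairs after sub-partition---is false. In the sub-case $K_1\sim K_4\gg K_5$, take $\xi_1=\xi_2=K_1$ and $\xi_3=\xi_4=-K_1$: the four high frequencies all have the same magnitude, so after you spend the high--low pairing $(K_i,K_5)$ the remaining three high factors admit no bilinear estimate. Corollary~\ref{cor:HighLowShorttimeEstimates} does not apply to three high frequencies in isolation; its conclusion relies on the \emph{four}-linear constraint with the fourth frequency low, whereas here the ``output'' $\xi_2+\xi_3+\xi_4\approx -\xi_1$ is itself high. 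In the sub-case $K_1\sim K_5\gg K_6$, take $\xi_1\approx 2K_1$, $\xi_2\approx K_1$, $\xi_3=\xi_4=\xi_5\approx -K_1$: the constraint $\sum\xi_i=O(K_6)$ is satisfied, but only pairs involving $\xi_1$ are magnitude-separated, so two \emph{disjoint} separated pairs do not exist. No sign-configuration case analysis can repair this, because the obstruction is coinciding magnitudes, not signs.

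The paper's fix is to ask for less: in Case~2 it uses \emph{one} bilinear estimate together with three linear $L^6_{t,x}$-Strichartz estimates on the factors at $K_1,\ldots,K_5$. The three $L^6$ factors H\"older-combine to $L^2_{t,x}$ with no $K_1$-loss; paired with the single bilinear $L^2_{t,x}$ output (gain $K_1^{-1/2}$, plus $K_5^{1/2,0}$ when $K_5\ll K_1$) and the time-summation factor $K_1$, this yields exactly $K_1^{1/2}K_5^{1/2,0}\prod_{i\ge 6}K_i^{1/2}$. Only \emph{one} separated pair is needed: in the four-high sub-case it is the high--low pair $(K_i,K_5)$, and in the five-high sub-case one separated pair among the five is forced because five frequencies of nearly equal magnitude $\sim K_1$ cannot sum to $O(K_6)$ (the signed count $2p-5$ is never zero for integer $p$).
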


\begin{proof}
As explained in Remark \ref{rem:SeparationVariables}, we can dispose
of $P_{N_{j}}$ by expanding the Fourier multiplier into a rapidly
converging Fourier series. The resulting expression 
\[
\int_{0}^{t}\int_{\lambda\T}P_{K_{1}}u\ldots P_{K_{2k}}udxds
\]
is estimated by linear and bilinear Strichartz estimates after partitioning
$[0,t]$ into $O(K_{1}^{-1})$ time intervals of length $K_{1}^{-1}$:
\begin{enumerate}
\item If $K_{1}\sim K_{2}\gg K_{3}$ or $K_{1}\sim K_{3}\gg K_{4}$, following
the proof of Corollary \ref{cor:HighLowShorttimeEstimates} we can
apply two bilinear Strichartz estimates involving $K_{1},\ldots,K_{4}$.
This gives the first bound.
\item If $K_{1}\sim K_{4}\gg K_{5}$ or $K_{1}\sim K_{5}\gg K_{6}$, we
can apply one bilinear Strichartz estimate and three $L_{t,x}^{6}$-Strichartz
estimates involving $K_{1},\dots,K_{5}$. Pointwise bounds for the
lower frequencies give the second estimate.
\item If $K_{1}\sim K_{6}$, the claim follows from six linear $L_{t,x}^{6}$-Strichartz
estimates.
\end{enumerate}
\end{proof}
\begin{proof}[Proof of Lemma \ref{lem:Contribution-bOmega}]
By Lemma \ref{lem:int-by-pts-time-apriori}, it suffices to estimate
the summation of boundary terms and spacetime terms. For the boundary
terms, we use 
\[
a(N_{1})N_{1}^{-1}\Big(\prod_{i=3}^{k+1}N_{i}^{1/2}\Big)\lesssim N^{-\epsilon}N_{1}^{\frac{1}{2},s_{2}+s'-2\epsilon}N_{2}^{\frac{1}{2},s_{2}+s'-2\epsilon}\prod_{i=3}^{k+1}N_{i}^{\frac{1}{2},\frac{1}{2}-2\varepsilon}
\]
to find 
\begin{align*}
 & \ \sum_{N_{1}\sim N_{2}\gtrsim\cdots\gtrsim N_{k+1}}a(N_{1})N_{1}^{-1}\Big(\prod_{i=3}^{k+1}N_{i}^{1/2}\Big)\Big(\prod_{i=1}^{k+1}\|P_{N_{i}}u\|_{L_{t}^{\infty}([0,T],L_{\lambda}^{2})}\Big)\\
 & \lesssim N^{-\epsilon}\|u\|_{F_{\lambda}^{\overline{s}(\epsilon)+s'}}^{2}\|u\|_{F_{\lambda}^{s_{1}+\epsilon,1/2-\epsilon}}^{k-1}.
\end{align*}

For the spacetime terms, by Lemma \ref{lem:MultilinearSpaceTimeEstimate},
it suffices to estimate 
\begin{equation}
\sum_{\substack{K_{1}\sim K_{2}\gtrsim\cdots\gtrsim K_{2k}\\
K_{1}\sim K_{2}\gtrsim1
}
}N^{2(s_{2}+s')-1}K_{3}C(K_{1},\dots,K_{2k})\prod_{i=1}^{2k}\Vert P_{K_{i}}u\Vert_{F_{\lambda}^{0}},\label{eq:Lemma5.5temp1}
\end{equation}
where $C(K_{1},\dots,K_{2k})$ is given in \eqref{eq:MultilinearSpaceTimeEstimateTemp1},
and we used $a(N_{1})N_{1}^{-1}\lesssim N^{2s_{2}-1}$ and $K_{1}\sim K_{2}\gtrsim N\gtrsim1$.
Due to \eqref{eq:MultilinearSpaceTimeEstimateTemp1}, we have 
\[
N^{2(s_{2}+s')-1}K_{3}C(K_{1},\dots,K_{2k})\lesssim N^{-\epsilon}K_{1}^{s_{2}+s'-2\epsilon}K_{2}^{s_{2}+s'-2\epsilon}\prod_{i=3}^{2k}K_{i}^{\frac{1}{2},\frac{1}{2}-2\epsilon},
\]
provided that $\epsilon$ is sufficiently small. Therefore, 
\[
\eqref{eq:Lemma5.5temp1}\lesssim N^{-\epsilon}\|u\|_{F_{\lambda}^{\overline{s}(\epsilon)+s'}}^{2}\|u\|_{F_{\lambda}^{s_{1}+\epsilon,1/2-\epsilon}}^{2k-2}.
\]
This completes the proof.
\end{proof}

\subsection{\label{subsec:Contribution-c}Contribution of $c$}

\begin{lem}[Contribution of $c$]
\label{lem:Contribution-c}Let $T\in(0,1]$ and $0<s_{1}<1/2$.
\begin{itemize}
\item If $s_{2}>1/2$, then there exists $\epsilon=\epsilon(\overline{s})>0$
such that we find 
\[
\begin{split} & \;\sum_{\substack{N_{1}\sim N_{2}\gtrsim\cdots\gtrsim N_{k+1}\\
N_{1}\sim N_{2}\gtrsim N
}
}\bigg|\int_{0}^{T}\int_{\Gamma_{\lambda}^{k+1}}c(\xi_{1},\dots,\xi_{k+1})\prod_{i=1}^{k+1}\chi_{N_{i}}(\xi_{i})\hat{u}(s,\xi_{i})\,d\Gamma_{\lambda}^{k+1}ds\bigg|\\
 & \lesssim N^{-\epsilon}\|u\|_{F_{\lambda}^{\overline{s}(\epsilon)+s'}}^{2}\|u\|_{F_{\lambda}^{\overline{s}(\epsilon)}}^{k-1}.
\end{split}
\]
\item If $k=4$ and $s_{2}=1/2$, we find 
\[
\begin{split} & \;\sum_{\substack{N_{1}\sim N_{2}\gtrsim\cdots\gtrsim N_{k+1}\\
N_{1}\sim N_{2}\gtrsim N
}
}\bigg|\int_{0}^{T}\int_{\Gamma_{\lambda}^{5}}c(\xi_{1},\dots,\xi_{k+1})\prod_{i=1}^{5}\chi_{N_{i}}(\xi_{i})\hat{u}(s,\xi_{i})\,d\Gamma_{\lambda}^{5}ds\bigg|\\
 & \lesssim\|u\|_{F_{\lambda}^{a_{N}}}^{2}\|u\|_{F_{\lambda}^{\overline{s}}}^{3}.
\end{split}
\]
\end{itemize}
Here, we recall that the $F_{\lambda}^{a_{N}}$-norm is defined in
\eqref{eq:F-aN-def}.
\end{lem}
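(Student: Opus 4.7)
Our plan is to reduce the multilinear Fourier integral to a space-time integral via the separation-of-variables argument (Remark~\ref{rem:SeparationVariables}), estimate this by short-time bilinear Strichartz (Proposition~\ref{prop:ShorttimeBilinearBenjaminOnoStrichartzEstimate}), and then sum the dyadic pieces using the pointwise bound $|c| \lesssim (a(N_1)N_1^{-1}N_3 + a(N_3))N_5$ from Lemma~\ref{lem:decomposition-Ak}. The support property $N_3 \sim N_4$ and the $N_5$-factor in $|c|$, which encode the five-frequency cancellation of the symmetrization in Section~\ref{subsec:decomposition-Ak}, are essential.

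For the space-time estimate we partition $[0,T]$ into subintervals $J$ of length $\min(T, N_1^{-1})$. On each $J$ with $N_1 \gg N_3$ we pair $(P_{N_1}u, P_{N_3}u)$ and $(P_{N_2}u, P_{N_4}u)$ via Proposition~\ref{prop:ShorttimeBilinearBenjaminOnoStrichartzEstimate}, gaining $N_1^{-1} N_3^{1/2,0} N_4^{1/2,0}$, and treat $P_{N_5}u,\ldots,P_{N_{k+1}}u$ in $L^\infty_{t,x}$ via Bernstein. In the degenerate Case~C of Lemma~\ref{lem:decomposition-Ak} where $N_1 \sim N_3$, we further decompose the frequency supports into sub-intervals of size $cN_1$ as in the proof of Corollary~\ref{cor:HighLowShorttimeEstimates}, producing magnitude-separated pairs. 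Summing over the $O(TN_1)$ subintervals and using standard $U^2$-based short-time bookkeeping (cf.\ Lemma~\ref{lem:short-time-DU2-estimate}), we obtain
\[
\Big|\int_0^T\!\!\int_{\lambda\T} \prod_{i=1}^{k+1} P_{N_i}u\,dx\,ds\Big| \lesssim N_3^{1/2,0} N_4^{1/2,0}\prod_{i=5}^{k+1} N_i^{1/2,0} \prod_{i=1}^{k+1} \|P_{N_i}u\|_{F^0_\lambda}.
\]

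We then combine this with $\|c\|_{L^\infty}$ and sum over dyadic tuples. For the first claim ($s_2 > 1/2$) the crucial observation is that $a(N_1) = a(N_3) = N^{2(s_2+s')}$ whenever these frequencies exceed $N/2$, which is also bounded by $\min\{N_1, N_3\}^{2(s_2+s')}$; this majorization allows the summation in $N_1 \geq \max(N, N_3)$ to produce a geometric gain. Writing $\|P_{N_i}u\|_{F^0} = N_i^{-w_i'}\|P_{N_i}u\|_{F^{w_i'}}$ with $w_i' = \bar{s}(\epsilon)+s'$ for $i \leq 2$ and $w_i' = \bar{s}(\epsilon)$ for $i \geq 3$, and then applying Cauchy--Schwarz dyadically, we use $N_3 \sim N_4$, $N_5 \leq N_3$, and $\epsilon < (s_2 - 1/2)/2$ to close a geometric summation in the order $N_{k+1}, \ldots, N_6, N_5, N_3, N_1$, yielding the factor $N^{-\epsilon}$. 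For the second claim ($k=4$, $s_2=1/2$) the $\epsilon$-margin is unavailable; instead we absorb the $N^{2s_2}=N$ factor into the $F^{a_N}$-norm via the elementary estimate $\sum_{N_1\sim N_2\gtrsim N} \|P_{N_1}u\|_{F^0}\|P_{N_2}u\|_{F^0} \lesssim N^{-2s_2}\|u\|_{F^{a_N}}^2$, and the remaining factors are controlled by $\|u\|_{F^{\bar{s}}}^3$.

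The main obstacle is the exponent bookkeeping in the regime where $N_3 \sim N_4$ is close to $N_1$ and $N_5$ is close to $N_3$. The positive exponent $\tfrac32 - s_2 + \epsilon$ of $N_5$ must be absorbed via $N_5 \leq N_3$ combined with the constraint $N_3 \sim N_4$; without the $N_5$-factor in $|c|$ the analogous strategy would require $s_2 > 3/2$, making that cancellation indispensable.
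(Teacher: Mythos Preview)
Your overall strategy matches the paper's: reduce to physical space via Remark~\ref{rem:SeparationVariables}, use the support property $N_3\sim N_4$ and the symbol bound \eqref{eq:SymbolEstimate-c}, estimate the resulting $(k+1)$-linear space-time integral by short-time Strichartz, then sum dyadically. The paper invokes exactly the multilinear bound
\[
\Big|\int_0^T\!\int_{\lambda\T}\prod_{i=1}^{k+1}P_{N_i}u\,dx\,ds\Big|\lesssim\prod_{i=1}^4\|P_{N_i}u\|_{F^0_\lambda}\prod_{i=5}^{k+1}N_i^{1/2}\|P_{N_i}u\|_{F^0_\lambda},
\]
obtained from the proof of Lemma~\ref{lem:MultilinearSpaceTimeEstimate}, and then verifies the pointwise coefficient inequality that yields the $N^{-\epsilon}$ gain.

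There is, however, a genuine gap in your argument in the regime $N_1\sim N_2\sim N_3\sim N_4$ (Case~C of Lemma~\ref{lem:decomposition-Ak}). You claim that decomposing the four high-frequency supports into intervals of length $cN_1$ ``as in Corollary~\ref{cor:HighLowShorttimeEstimates}'' always produces a magnitude-separated pair, so that two bilinear Strichartz estimates apply. This is false: the configuration $\xi_1\approx\xi_2\approx M$, $\xi_3\approx\xi_4\approx -M$ satisfies $\xi_1+\cdots+\xi_4\approx 0$ yet has $|\xi_i|\approx M$ for all four indices, so no pair among them is magnitude-separated. Corollary~\ref{cor:HighLowShorttimeEstimates} treats three high and one low frequency, where this obstruction is ruled out by the constraint $\xi_1+\xi_2+\xi_3\approx 0$; with four comparable high frequencies it is not. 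Consequently your displayed space-time bound (with constant $N_3^{1/2,0}N_4^{1/2,0}\prod_{i\ge5}N_i^{1/2,0}$) is unjustified in this case. The fix, exactly as in case~(2) of Lemma~\ref{lem:MultilinearSpaceTimeEstimate}, is to apply one bilinear estimate pairing a high factor with $P_{N_5}u$ and three $L^6_{t,x}$-Strichartz estimates to the remaining high factors; this costs an extra $N_1^{1/2}$, which is harmless for the dyadic summation both when $s_2>1/2$ and at the $k=4$, $s_2=1/2$ endpoint. Two minor corrections: placing $P_{N_i}u$, $i\ge5$, in $L^\infty_{t,x}$ via Bernstein gives $N_i^{1/2}$, not $N_i^{1/2,0}$; and in the quartic endpoint the weight to absorb into $\|u\|_{F^{a_N}}^2$ is $a(N_1)=N^{2(s_2+s')}$, not $N^{2s_2}$.
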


\begin{proof}
By Lemma \ref{lem:decomposition-Ak}, $c$ is nonzero only when $N_{3}\sim N_{4}$.
By Remark \ref{rem:SeparationVariables}, we may replace $c$ by $(a(N_{1})N_{1}^{-1}N_{3}+a(N_{3}))N_{5}$
and change back to the physical space representation. $a=a_{N}$ enables
us to restrict to $N_{1}\sim N_{2}\gtrsim N\gtrsim1$. Thus it suffices
to estimate 
\[
\sum_{\substack{N_{1}\sim N_{2}\gtrsim N_{3}\sim N_{4},\\
N_{5}\gtrsim\cdots\gtrsim N_{k+1},\\
N_{1}\sim N_{2}\gtrsim1
}
}(a(N_{1})N_{1}^{-1}N_{3}+a(N_{3}))N_{5}\bigg|\int_{0}^{T}\int_{\lambda\T}\prod_{i=1}^{k+1}P_{N_{i}}u\,dxds\bigg|.
\]
We also note that the proof of Lemma \ref{lem:MultilinearSpaceTimeEstimate}
implies 
\[
\bigg|\int_{0}^{T}\int_{\lambda\T}\prod_{i=1}^{k+1}P_{N_{i}}u\,dxds\bigg|\lesssim\prod_{i=1}^{4}\|P_{N_{i}}u\|_{F_{\lambda}^{0}}\prod_{i=5}^{k+1}N_{i}^{1/2}\|P_{N_{i}}u\|_{F_{\lambda}^{0}}.
\]
Therefore, it suffices to estimate 
\begin{equation}
\sum_{\substack{N_{1}\sim N_{2}\gtrsim N_{3}\sim N_{4},\\
N_{5}\gtrsim\cdots\gtrsim N_{k+1},\\
N_{1}\sim N_{2}\gtrsim1
}
}(a(N_{1})N_{1}^{-1}N_{3}+a(N_{3}))N_{5}\prod_{i=5}^{k+1}N_{i}^{1/2}\prod_{i=1}^{k+1}\|P_{N_{i}}u\|_{F_{\lambda}^{0}}.\label{eq:Contribution-c-temp1}
\end{equation}

If $s_{2}>1/2$, then we find 
\[
(a(N_{1})N_{1}^{-1}N_{3}+a(N_{3}))N_{5}\prod_{i=5}^{k+1}N_{i}^{1/2}\lesssim N^{-\epsilon}N_{1}^{1/2,s_{2}+s'-2\varepsilon}N_{2}^{1/2,s_{2}+s'-2\varepsilon}\prod_{i=3}^{k+1}N_{i}^{1/2,s_{2}-2\varepsilon}.
\]
Therefore, 
\[
\eqref{eq:Contribution-c-temp1}\lesssim N^{-\epsilon}\|u\|_{F_{\lambda}^{\overline{s}(\epsilon)+s'}}^{2}\|u\|_{F_{\lambda}^{\overline{s}(\epsilon)}}^{k-1}.
\]
If $k=4$ and $s_{2}=1/2$, then we find 
\[
(a(N_{1})N_{1}^{-1}N_{3}+a(N_{3}))N_{5}^{3/2}\lesssim a(N_{1})N_{5}^{3/2}\lesssim N^{2s'+1}N_{5}^{3/2}.
\]
Therefore,
\begin{align*}
\eqref{eq:Contribution-c-temp1} & \lesssim\sum_{N_{1}\sim N_{2}\gtrsim N_{3}\sim N_{4}\gtrsim N_{5}}N^{2s'+1}N_{5}^{3/2}\prod_{i=1}^{5}\Vert P_{N_{i}}u\Vert_{F_{\lambda}^{0}}\\
 & \lesssim\|u\|_{F_{\lambda}^{\overline{s}}}\sum_{N_{1}\sim N_{2}\gtrsim N_{3}\sim N_{4}}N^{2s'+1}N_{4}\prod_{i=1}^{4}\Vert P_{N_{i}}u\Vert_{F_{\lambda}^{0}}\lesssim\|u\|_{F_{\lambda}^{\overline{s}+s'}}^{2}\|u\|_{F_{\lambda}^{\overline{s}}}^{3}
\end{align*}

%
%
\end{proof}
The proof of Lemma \ref{lem:ReductionEnergyEstimates} (and hence
that of Proposition \ref{prop:EnergyEstimateSolutions}) is now completed
by Lemmas \ref{lem:Contribution-bOmega} and \ref{lem:Contribution-c}.

\section{\label{sec:DifferenceEstimate}Estimates for differences of solutions}

The goal of this section is to estimate the differences of solutions:
Proposition \ref{prop:EnergyEstimatesDifferences}. We firstly prove
the Lipschitz continuity in a weaker topology $H_{\lambda}^{\overline{s}-1}$
for $H_{\lambda}^{\overline{s}}$-solutions \eqref{eq:LipschitzContinuityNegativeSobolevSpaces}.
Furthermore, we bound the $H_{\lambda}^{\overline{s}}$-difference
of solutions by the $H_{\lambda}^{\overline{s}-1}$-difference estimate
and the $H_{\lambda}^{\overline{s}+1}$-a priori bound \eqref{eq:HsContinuityBonaSmithRegularities},
as usual for the Bona-Smith approximation (cf. \cite[Section~4]{IonescuKenigTataru2008}). 
\begin{prop}[Energy estimates for differences of solutions]
\label{prop:EnergyEstimatesDifferences}Let $\overline{s}=(s_{1},s_{2})$
with $0<s_{1}<1/2$ and $s_{2}\geq3/4$. Let $u_{1}$ and $u_{2}$
be smooth solutions to \eqref{eq:GeneralizedBenjaminOnoEquation}
defined on $[0,T]\subseteq[0,1]$. Set $v=u_{1}-u_{2}$. Then, we
find the following estimates to hold:
\begin{enumerate}
\item Lipschitz continuity in $H_{\lambda}^{\overline{s}-1}$ for $H_{\lambda}^{\overline{s}}$
solutions: 
\begin{align}
\Vert v\Vert_{E_{\lambda}^{\overline{s}-1}(T)}^{2} & \lesssim\Vert v(0)\Vert_{H_{\lambda}^{\overline{s}-1}}^{2}+\Vert v\Vert_{F_{\lambda}^{\overline{s}-1}}^{2}(\Vert u_{1}\Vert_{F_{\lambda}^{\overline{s}}}+\Vert u_{2}\Vert_{F_{\lambda}^{\overline{s}}})^{k-1}\label{eq:LipschitzContinuityNegativeSobolevSpaces}\\
 & \quad+\Vert v\Vert_{F_{\lambda}^{\overline{s}-1}}^{2}(\Vert u_{1}\Vert_{F_{\lambda}^{\overline{s}}}+\Vert u_{2}\Vert_{F_{\lambda}^{\overline{s}}})^{2k-2}.\nonumber 
\end{align}
\item Continuity in $H_{\lambda}^{\overline{s}}$: 
\begin{align}
\Vert v\Vert_{E_{\lambda}^{\overline{s}}(T)}^{2} & \lesssim\Vert v(0)\Vert_{H_{\lambda}^{\overline{s}}}^{2}+\Vert v\Vert_{F_{\lambda}^{\overline{s}}}^{2}(\Vert v\Vert_{F_{\lambda}^{\overline{s}}}+\Vert u_{2}\Vert_{F_{\lambda}^{\overline{s}}})^{k-1}\label{eq:HsContinuityBonaSmithRegularities}\\
 & \quad+\Vert v\Vert_{F_{\lambda}^{\overline{s}}}\Vert v\Vert_{F_{\lambda}^{\overline{s}-1}}\Vert u_{2}\Vert_{F_{\lambda}^{\overline{s}+1}}(\Vert v\Vert_{F_{\lambda}^{\overline{s}}}+\Vert u_{2}\Vert_{F_{\lambda}^{\overline{s}}})^{k-2}\nonumber \\
 & \quad+\Vert v\Vert_{F_{\lambda}^{\overline{s}}}^{2}(\Vert v\Vert_{F_{\lambda}^{\overline{s}}}+\Vert u_{2}\Vert_{F_{\lambda}^{\overline{s}}})^{2k-2}\nonumber \\
 & \quad+\Vert v\Vert_{F_{\lambda}^{\overline{s}}}\Vert v\Vert_{F_{\lambda}^{\overline{s}-1}}\Vert u_{2}\Vert_{F_{\lambda}^{\overline{s}+1}}(\Vert v\Vert_{F_{\lambda}^{\overline{s}}}+\Vert u_{2}\Vert_{F_{\lambda}^{\overline{s}}})^{2k-3}.\nonumber 
\end{align}
\end{enumerate}
\end{prop}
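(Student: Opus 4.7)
\emph{Plan.} The approach mirrors the symmetrization scheme of Section~\ref{sec:EnergyEstimateSolutions}, adapted to the difference equation
\[
\partial_t v + \mathcal{H}\partial_{xx} v = \partial_x\bigl(v\,Q(u_1,u_2)\bigr), \qquad Q(u_1,u_2)=\sum_{j=0}^{k-1}u_1^{k-1-j}u_2^j.
\]
For each $a=a_N\in S^{s_2+s'}$ (with $s'=-1$ for \eqref{eq:LipschitzContinuityNegativeSobolevSpaces} and $s'=0$ for \eqref{eq:HsContinuityBonaSmithRegularities}) we differentiate $\|v(t)\|_{H^a}^2$ in time, insert the difference equation, and symmetrize in $\xi_1 \leftrightarrow \xi_2$ (the two $v$-legs). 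This produces a multilinear expression on $\Gamma_\lambda^{k+1}$ with multiplier
\[
\widetilde A_k(\xi_1,\ldots,\xi_{k+1})=a(\xi_1)\xi_1+a(\xi_2)\xi_2,
\]
integrated against $\hat v(\xi_1)\hat v(\xi_2)\prod_{i=3}^{k+1}\widehat{w_i}(\xi_i)$ with $w_i\in\{u_1,u_2\}$. Using $\xi_1+\xi_2=-(\xi_3+\ldots+\xi_{k+1})$ on $\Gamma_\lambda^{k+1}$, a decomposition $\widetilde A_k=\widetilde b\,\Omega_k+\widetilde c$ in the spirit of Lemma~\ref{lem:decomposition-Ak} holds with $|\widetilde b|\lesssim a(N_1)N_1^{-1}$, and $\widetilde c$ supported where $N_3\sim N_4$ with $|\widetilde c|\lesssim(a(N_1)N_1^{-1}N_3+a(N_3))N_5$.

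The $\widetilde b\,\Omega_k$-contribution is treated by integration by parts in time as in Lemma~\ref{lem:Contribution-bOmega}: we change to the interaction picture, exploit $\partial_s e^{-is\Omega_k}=-i\Omega_k e^{-is\Omega_k}$, and produce a boundary term plus a $2k$-linear spacetime term. The boundary term is estimated by placing $L_\lambda^2$ on the two highest frequencies and pointwise bounds on the rest. For the spacetime term, when $\partial_s$ lands on a $v$-factor we substitute the difference equation and estimate via the $L_t^1 L_x^2$-product estimate of Lemma~\ref{lem:decomp-prod}: with $s'=-1$ for the Lipschitz estimate (this directly yields the bound $\|u\|^{k-1}_{F_\lambda^{\overline{s}}}\,c_N^{(v,\overline{s},-1)}$ needed for \eqref{eq:LipschitzContinuityNegativeSobolevSpaces}), and with $s'=0$ for the continuity estimate (this is where the exchange envelope $c_N^{(u,\overline{s},0)}\|v\|_{F_\lambda^{\overline{s}}}$ in \eqref{eq:HsContinuityBonaSmithRegularities} appears). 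When $\partial_s$ lands on a $u_i$-factor we substitute the equation for $u_1$ or $u_2$ and proceed analogously, generating the $(\cdot)^{2k-2}$-type terms. The $\widetilde c$-contribution is estimated directly as in Lemma~\ref{lem:Contribution-c}, using the four-factor bilinear Strichartz estimate of Lemma~\ref{lem:MultilinearSpaceTimeEstimate} and the support restriction $N_3\sim N_4$; summation over dyadic $N_5,\ldots,N_{k+1}$ produces the remaining combinatorial terms.

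The main obstacle is the High$\times$High$\times$High$\times$High resonant interaction (the analogue of Subcase~C.II in Lemma~\ref{lem:decomposition-Ak}) for the continuity estimate \eqref{eq:HsContinuityBonaSmithRegularities}. With only two legs carrying the $a$-weight, the favorable symmetric cancellation that yielded $|c|\lesssim a(N_1)N_5$ in Section~\ref{sec:EnergyEstimateSolutions} is only partially available: when the two top frequencies both come from $u_2$-factors while the $v$-legs are low, one must trade a derivative onto $u_2$ (producing the factor $\|u_2\|_{F_\lambda^{\overline{s}+1}}$) and compensate the loss by the weaker Lipschitz norm $\|v\|_{F_\lambda^{\overline{s}-1}}$ on a $v$-leg, yielding the mixed Bona--Smith terms $\|v\|_{F_\lambda^{\overline{s}}}\|v\|_{F_\lambda^{\overline{s}-1}}\|u_2\|_{F_\lambda^{\overline{s}+1}}$ in \eqref{eq:HsContinuityBonaSmithRegularities}. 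Balancing the resulting Young-type inequality is what forces the threshold $s_2\geq 3/4$, as foreshadowed in Remark~2 of the introduction, and the final $N$-summation is carried out with the $\ell_N^2$-envelopes $c_N^{(v,\overline{s},s')}$ from \eqref{eq:c_N-definition}--\eqref{eq:l2-summation-c_N}.
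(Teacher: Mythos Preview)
Your plan diverges substantially from the paper's argument and contains a real gap. The paper does \emph{not} transport the symbol machinery of Section~\ref{sec:EnergyEstimateSolutions} to the difference equation. Instead, Section~\ref{sec:DifferenceEstimate} works with sharp projectors: it differentiates $\|P_N v\|_{L^2}^2$, expands the resulting integral into pieces $\int P_N v\,\partial_x P_N(P_K v\,P_{K_1}u\cdots)$, and splits according to the decreasing rearrangement $M_1\geq\cdots\geq M_{k+1}$ of all $k+1$ input frequencies into three regimes: $M_3\gg M_4$ (raw integration by parts in time via Lemma~\ref{lem:int-by-pts-time-difference}, using only $|\Omega_k|\sim M_1M_3$); $M_1\gg M_3\sim M_4$ (two bilinear Strichartz estimates); and $M_1\sim M_4$ (four linear $L_t^8L_x^4$ estimates). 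The restriction $s_2\geq 3/4$ is forced in the last regime by the $M_1^{1/2}$ Strichartz loss (Section~\ref{subsec:diff-est-case3}), not by a Young-type balance.

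The gap is your asserted decomposition $\widetilde A_k=\widetilde b\,\Omega_k+\widetilde c$ with $|\widetilde b|\lesssim a(N_1)N_1^{-1}$. The bound~\eqref{eq:int-by-pts-space-a} for solutions relies on the \emph{full} symmetry $A_k=\sum_i a(\xi_i)\xi_i$: the two highest-frequency terms cancel because both carry the weight. For differences only the two $v$-legs carry $a$, and when exactly one $v$-leg sits at the top frequency $N_1$ while $N_2\sim N_1$ is a $u$-leg, no such cancellation occurs: since $a_N$ is constant on $|\xi|\geq N/2$, one has $\widetilde A_k=a(N_1)(\xi_1+\xi_2)$, and on $\Gamma_\lambda^{k+1}$ this equals minus $a(N_1)$ times the sum of all $u$-frequencies, which has size $\sim N_1$ because it contains the $u$-leg at $N_2$. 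Hence $|\widetilde A_k|\sim a(N_1)N_1$, so in the nonresonant regime one only obtains $|\widetilde b|\sim a(N_1)/N_3$. You also miss the algebraic device the paper uses for~\eqref{eq:HsContinuityBonaSmithRegularities}: the nonlinearity is rewritten as $(\partial_x v)u^{k-1}+v\,w^{k-1}$ with $w^{k-1}$ containing exactly one factor $\partial_x u_2$ (see~\eqref{eq:second-expression}); this structural choice, not a derivative trade during estimation, is what places $\|\cdot\|_{F^{\overline s+1}}$ solely on $u_2$ and yields the Bona--Smith terms.
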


We start with the equation for $v$: 
\[
\partial_{t}v+\mathcal{H}\partial_{xx}v=\partial_{x}(u_{1}^{k}-u_{2}^{k}).
\]
We write $\partial_{x}(u_{1}^{k}-u_{2}^{k})$ in two ways. A standard
way of writing $\partial_{x}(u_{1}^{k}-u_{2}^{k})$ is 
\[
\partial_{x}(u_{1}^{k}-u_{2}^{k})=\partial_{x}(v(u_{1}^{k-1}+u_{1}^{k-2}u_{2}+\cdots+u_{2}^{k-1})).
\]
For simplicity of notations, let us express this as 
\begin{equation}
\partial_{x}(u_{1}^{k}-u_{2}^{k})=\partial_{x}(vu^{k-1}),\label{eq:first-expression}
\end{equation}
where $u^{k-1}$ means a linear combination of $u_{1}^{k-1},u_{1}^{k-2}u_{2},\dots,u_{2}^{k-1}$.
We use \eqref{eq:first-expression} to show \eqref{eq:LipschitzContinuityNegativeSobolevSpaces}.
However, when we show \eqref{eq:HsContinuityBonaSmithRegularities},
we express $\partial_{x}(u_{1}^{k}-u_{2}^{k})$ in another way. It
is straight-forward that there exist integers $c_{0},\dots,c_{k-2}$
and $d_{0},\dots,d_{k-3}$ such that 
\[
\partial_{x}(u_{1}^{k}-u_{2}^{k})=(\partial_{x}v)(\sum_{i=0}^{k-2}c_{i}u_{1}^{k-1-i}u_{2}^{i})+v(\partial_{x}u_{2})(\sum_{i=0}^{k-3}d_{i}u_{1}^{k-2-i}u_{2}^{i}).
\]
We compactly write this as 
\begin{equation}
\partial_{x}(u_{1}^{k}-u_{2}^{k})=(\partial_{x}v)u^{k-1}+vw^{k-1},\label{eq:second-expression}
\end{equation}
where $u^{k-1}$ means a linear combination of $u_{1}^{k-1},u_{1}^{k-2}u_{2},\dots,u_{2}^{k-1}$
as above, but $w^{k-1}$ means a linear combination of $u_{1}^{k-2}\partial_{x}u_{2},u_{1}^{k-3}u_{2}\partial_{x}u_{2},\dots,u_{2}^{k-2}\partial_{x}u_{2}$.
An advantage of using \eqref{eq:second-expression} is that $\partial_{x}$
is not applied to $u_{1}$ so that we can avoid $F^{7/4}$-norm for
$u_{1}$, as stated in \eqref{eq:HsContinuityBonaSmithRegularities}.

For \eqref{eq:LipschitzContinuityNegativeSobolevSpaces}, we take
$P_{N}$ to \eqref{eq:first-expression} for $N\geq1$ (the case $N<1$
does not require an estimate by the definition of the $E_{\lambda}^{s}$-norm),
multiply with $N^{s_{2}-1}P_{N}v$, and then integrate on $[0,t]\times\lambda\T$
to get 
\[
N^{2s_{2}-2}\|P_{N}v(t)\|_{L_{\lambda}^{2}}^{2}=N^{2s_{2}-2}\|P_{N}v(0)\|_{L_{\lambda}^{2}}^{2}+N^{2s_{2}-2}\int_{0}^{t}\int_{\lambda\T}P_{N}v\partial_{x}P_{N}(vu^{k-1})dxds
\]
for any $t\in[0,T]$ and $N$. Thus, 
\[
\|v\|_{E_{\lambda}^{\overline{s}-1}}^{2}\leq\|v(0)\|_{H_{\lambda}^{\overline{s}-1}}^{2}+\sum_{N\geq1}N^{2s_{2}-2}\sup_{t\in[0,T]}\Big|\int_{0}^{t}\int_{\lambda\T}P_{N}v\partial_{x}P_{N}(vu^{k-1})dxds\Big|.
\]
We split the integrand into the sum of Littlewood-Paley pieces: 
\begin{multline}
\sum_{N\geq1}N^{2s_{2}-2}\sup_{t\in[0,T]}\Big|\int_{0}^{t}\int_{\lambda\T}P_{N}v\partial_{x}P_{N}(vu^{k-1})dxds\Big|\\
\leq\sum_{\substack{K,K_{1},\dots,K_{k-1},\\
N\geq1
}
}N^{2s_{2}-2}\sup_{t\in[0,T]}\Big|\int_{0}^{t}\int_{\lambda\T}P_{N}v\partial_{x}P_{N}(P_{K}vP_{K_{1}}u\dots P_{K_{k-1}}u)dxds\Big|.\label{eq:s-1-expansion}
\end{multline}
Here, $N\geq1$ and $P_{K_{i}}u$ can be either $P_{K_{i}}u_{1}$
or $P_{K_{i}}u_{2}$.

For \eqref{eq:HsContinuityBonaSmithRegularities}, we similarly find
\[
\|v\|_{E_{\lambda}^{\overline{s}}}^{2}\leq\|v(0)\|_{H_{\lambda}^{\overline{s}}}^{2}+\sum_{N\geq1}N^{2s_{2}}\sup_{t\in[0,T]}\Big|\int_{0}^{t}\int_{\lambda\T}P_{N}v\partial_{x}P_{N}(vu^{k-1})dxds\Big|,
\]
and further using the expression \eqref{eq:second-expression}, 
\begin{multline}
\sum_{N\geq1}N^{2s_{2}}\sup_{t\in[0,T]}\Big|\int_{0}^{t}\int_{\lambda\T}P_{N}v\,P_{N}((\partial_{x}v)u^{k-1}+vw^{k-1})dxds\Big|\\
\leq\sum_{\substack{K,K_{1},\dots,K_{k-1},\\
N\geq1
}
}N^{2s_{2}}\bigg(\sup_{t\in[0,T]}\Big|\int_{0}^{t}\int_{\lambda\T}P_{N}vP_{N}((\partial_{x}P_{K}v)P_{K_{1}}u\dots P_{K_{k-1}}u)dxds\Big|\\
+\sup_{t\in[0,T]}\Big|\int_{0}^{t}\int_{\lambda\T}P_{N}vP_{N}(P_{K}vP_{K_{1}}w\dots P_{K_{k-1}}w)dxds\Big|\bigg).\label{eq:s-expansion}
\end{multline}
Here, $P_{K_{i}}u$ can be either $P_{K_{i}}u_{1}$ or $P_{K_{i}}u_{2}$;
$P_{K_{i}}w$ can be either $P_{K_{i}}u_{1}$, $P_{K_{i}}u_{2}$,
or $P_{K_{i}}\partial_{x}u_{2}$, but $P_{K_{i}}\partial_{x}u_{2}$
should appear \emph{exactly once} among $P_{K_{1}}w,\dots,P_{K_{k-1}}w$.

We may assume $K_{1}\geq\cdots\geq K_{k-1}$. Let $M_{1},\dots,M_{k+1}$
be the decreasing rearrangement of $N,K,K_{1},\dots,K_{k-1}$. In
particular, $M_{1}\sim M_{2}\gtrsim1$. We distinguish three cases: 
\begin{itemize}
\item $M_{1}\sim M_{2}\gtrsim M_{3}\gg M_{4}$: We treat this case in Section
\ref{subsec:diff-est-case1}. Here, we have seen in \eqref{eq:BO-like-nonresonance}
that $|\Omega_{k}|\sim M_{1}M_{3}$ and integrate by parts in time.
The estimates allow for $s_{2}>\frac{1}{2}$. 
\item $M_{1}\sim M_{2}\gg M_{3}\sim M_{4}$: We treat this case in Section
\ref{subsec:diff-est-case2}. We do not integrate by parts in time,
but apply two bilinear Strichartz estimates to the four highest frequencies.
The estimates allow for $s_{2}>\frac{1}{2}$. 
\item $M_{1}\sim M_{2}\sim M_{3}\sim M_{4}$: We treat this case in Section
\ref{subsec:diff-est-case3}. We do not integrate by parts in time.
We merely apply the linear Strichartz estimates $(L_{t}^{8}L_{x}^{4})$
to the highest four frequencies. The estimates allow for $s_{2}\geq\frac{3}{4}$.
This is why we set $s_{2}=\frac{3}{4}$ in Proposition \ref{prop:EnergyEstimatesDifferences}. 
\end{itemize}

\subsection{\label{subsec:diff-est-case1}Case $M_{1}\sim M_{2}\gtrsim M_{3}\gg M_{4}$}

Recall from \eqref{eq:BO-like-nonresonance} that 
\[
|\Omega_{k}|\sim M_{1}M_{3}.
\]
We handle these interactions via integration by parts in time (only
for $M_{3}\gtrsim1$). The following lemma systematically treats the
error terms arising from integration by parts in time.

\begin{lem}[Integration by parts in time]
\label{lem:int-by-pts-time-difference}Let $T\in(0,1]$; let $\overline{s}=(s_{1},s_{2})$
be such that $0<s_{1}<1/2<s_{2}$. Assume $M_{1}\sim M_{2}\gtrsim M_{3}\gg M_{4}$
and $M_{1}\gtrsim1$.
\begin{itemize}
\item For the $H_{\lambda}^{\overline{s}-1}$-estimate, we find 
\begin{align}
 & \quad\sup_{t\in[0,T]}\Big|\int_{0}^{t}\int_{\lambda\T}P_{N}vP_{K}vP_{K_{1}}u\dots P_{K_{k-1}}u\,dxds\Big|\nonumber \\
 & \lesssim(M_{1}^{-1}+\|u\|_{F_{\lambda}^{\overline{s}}}^{k-1})M_{3}^{0,-1}\Big(\prod_{i=3}^{k+1}M_{i}^{1/2}\Big)c_{N}^{(v,\overline{s},-1)}c_{K}^{(v,\overline{s},-1)}\prod_{i=1}^{k-1}c_{K_{i}}^{(u,\overline{s})}.\label{eq:s-1-int-by-pts-time}
\end{align}
\item For the $H_{\lambda}^{\overline{s}}$-estimate, we find 
\begin{align}
 & \quad\sup_{t\in[0,T]}\Big|\int_{0}^{t}\int_{\lambda\T}P_{N}v\partial_{x}P_{K}vP_{K_{1}}u\dots P_{K_{k-1}}u\,dxds\Big|\nonumber \\
 & \lesssim KM_{3}^{0,-1}\Big(\prod_{i=3}^{k+1}M_{i}^{1/2}\Big)\Big(\prod_{i=1}^{k-1}c_{K_{i}}^{(u,\overline{s})}\Big)\label{eq:s-int-by-pts-time-1}\\
 & \quad\times\bigg\{(M_{1}^{-1}+\|u\|_{F_{\lambda}^{\overline{s}}}^{k-1})c_{N}^{(v,\overline{s})}c_{K}^{(v,\overline{s})}+\|u\|_{F_{\lambda}^{\overline{s}}}^{k-2}\|v\|_{F_{\lambda}^{\overline{s}}}(c_{N}^{(v,\overline{s})}c_{K}^{(u,\overline{s})}+c_{N}^{(u,\overline{s})}c_{K}^{(v,\overline{s})})\bigg\}\nonumber 
\end{align}
and 
\begin{align}
 & \quad\sup_{t\in[0,T]}\Big|\int_{0}^{t}\int_{\T}P_{N}vP_{K}vP_{K_{1}}w\dots P_{K_{k-1}}w\,dxds\Big|\nonumber \\
 & \lesssim M_{3}^{0,-1}\Big(\prod_{i=3}^{k+1}M_{i}^{1/2}\Big)\Big(\prod_{i=1}^{k-1}c_{K_{i}}^{(w,\overline{s})}\Big)\label{eq:s-int-by-pts-time-2}\\
 & \quad\times\bigg\{(M_{1}^{-1}+\|u\|_{F_{\lambda}^{\overline{s}}}^{k-1})c_{N}^{(v,\overline{s})}+\|u\|_{F_{\lambda}^{\overline{s}}}^{k-2}\|v\|_{F_{\lambda}^{\overline{s}}}c_{N}^{(u,\overline{s})}\bigg\} c_{K}^{(v,\overline{s},-1)},\nonumber 
\end{align}
where $c_{K_{i}}^{(w,\overline{s})}=K_{i}c_{K_{i}}^{(u_{2},\overline{s},1)}$
when $w=\partial_{x}u_{2}$. 
\end{itemize}
Here, we recall that the quantities $c_{N}^{(v,\overline{s},s')}$
are defined in \eqref{eq:c_N-definition}.
\end{lem}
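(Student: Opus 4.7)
The plan is to mirror the proof of Lemma~\ref{lem:int-by-pts-time-apriori} for solutions, passing to the interaction picture and exploiting the non-resonance bound $|\Omega_{k}|\sim M_{1}M_{3}$ from \eqref{eq:BO-like-nonresonance}, which is valid on the frequency region $M_{1}\sim M_{2}\gtrsim M_{3}\gg M_{4}$. Writing $\hat{V}(s,\xi)=e^{is\xi|\xi|}\widehat{P_{N}v}(s,\xi)$ and analogously for each $P_{K}v$, $P_{K_{i}}u$ factor, each spacetime integral on the left of \eqref{eq:s-1-int-by-pts-time}--\eqref{eq:s-int-by-pts-time-2} takes the form of an oscillatory integral over $\Gamma_{\lambda}^{k+1}$ with phase $e^{-is\Omega_{k}}$. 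Applying $e^{-is\Omega_{k}}=i\Omega_{k}^{-1}\partial_{s}e^{-is\Omega_{k}}$ and integrating by parts in $s$ produces a boundary term at $s=0,t$ together with $k+1$ spacetime terms, one for each factor onto which $\partial_{s}$ falls. The multilinear symbol $\Omega_{k}^{-1}\prod_{i}\chi_{M_{i}}$ is handled via its rapidly convergent Fourier series, as in Remark~\ref{rem:SeparationVariables}, so throughout I may replace it by its $L^{\infty}$-norm $\lesssim M_{1}^{-1}M_{3}^{0,-1}$ and treat the remaining integral in physical space.

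\textbf{Boundary terms.} These contribute
\begin{equation*}
\frac{1}{\Omega_{k}}\int_{\lambda\T}P_{N}v\,P_{K}v\,\prod_{i=1}^{k-1}P_{K_{i}}u\,dx\,\Big|_{0}^{t}
\end{equation*}
(with obvious modifications for \eqref{eq:s-int-by-pts-time-1} and \eqref{eq:s-int-by-pts-time-2}). I apply $L^{2}\times L^{2}$ via the short-time $U^{2}_{BO}$-bilinear Strichartz estimate \eqref{eq:ShorttimeU2Estimate} to a pair of frequency-separated high-frequency factors (two of the $k+1$ factors sit at dyadic scale $\sim M_{1}$ and can be paired with one at scale $\sim M_{3}$), and use $L^{\infty}$-pointwise bounds costing $M_{i}^{1/2}$ for each remaining factor. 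The resulting bound is $M_{1}^{-1}M_{3}^{0,-1}\prod_{i=3}^{k+1}M_{i}^{1/2}$ times the appropriate product of $L^{\infty}_{t}L^{2}_{x}$-norms, each of which is absorbed into the corresponding $c_{\ast}^{(\cdot,\overline{s},\cdot)}$-envelope by the definition \eqref{eq:c_N-definition}--\eqref{eq:d_N-definition}. This yields the $M_{1}^{-1}$-term in \eqref{eq:s-1-int-by-pts-time} and the first lines of \eqref{eq:s-int-by-pts-time-1}, \eqref{eq:s-int-by-pts-time-2}.

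\textbf{Spacetime terms.} When $\partial_{s}$ hits a $v$-factor the difference equation $\partial_{t}v+\mathcal{H}\partial_{xx}v=\partial_{x}(vu^{k-1})$ replaces that factor by a Littlewood--Paley piece of $\partial_{x}(vu^{k-1})$; when it hits a $u$-factor the solution equation replaces it by a piece of $\partial_{x}(u^{k})$. I estimate the resulting integral by placing $L^{1}_{t}L^{2}_{x}$ on the substituted nonlinearity and $L^{\infty}_{t}L^{2}_{x}$ (via bilinear Strichartz) and $L^{\infty}_{t,x}$ (via pointwise bounds) on the remaining factors. The $L^{1}_{t}L^{2}_{x}$-bound is supplied by Lemma~\ref{lem:decomp-prod}: one applies it with $s'=-1$ for the $H^{\overline{s}-1}$-estimate \eqref{eq:s-1-int-by-pts-time}, and with $s'=0$ for \eqref{eq:s-int-by-pts-time-1} and \eqref{eq:s-int-by-pts-time-2}. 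The derivative produced by the substitution costs an extra factor of the high frequency, which cancels exactly the $M_{1}^{-1}$ in $|\Omega_{k}|^{-1}$; the remaining gain $M_{3}^{0,-1}$ from $|\Omega_{k}|^{-1}$ combined with $\prod_{i=3}^{k+1}M_{i}^{1/2}$ from Strichartz/pointwise bounds produces precisely the factor $\|u\|_{F^{\overline{s}}_{\lambda}}^{k-1}$ (or $\|u\|_{F^{\overline{s}}_{\lambda}}^{k-2}\|v\|_{F^{\overline{s}}_{\lambda}}$ when a $v$-factor was differentiated), which is the second structural term in each of the three bounds.

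\textbf{Main obstacle.} The bookkeeping of envelope factors is the main difficulty: one must track which of the original $v$- and $u$-factors inherits a $c^{(v,\overline{s},s')}$ versus a $c^{(u,\overline{s})}$ weight after $\partial_{s}$ is distributed, and in particular ensure that in \eqref{eq:s-int-by-pts-time-2} it is $K$ (not $N$) that carries the lossless weight $c_{K}^{(v,\overline{s},-1)}$, since $\partial_{x}$ was initially absent on both $v$'s and the $\partial_{x}u_{2}$ on the $w$-side is already accounted for by the convention $c_{K_{i}}^{(w,\overline{s})}=K_{i}c_{K_{i}}^{(u_{2},\overline{s},1)}$. A secondary obstacle is that the $M_{3}<1$ subcase (where $|\Omega_{k}|\sim M_{1}M_{3}$ is small) requires either avoiding integration by parts or verifying that the $M_{3}^{0,-1}$ weight is still bounded by $1$, both of which are consistent with the statement; this reduces to a direct H\"older estimate producing the same bound without the integration-by-parts gain.
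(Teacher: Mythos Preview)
Your overall strategy is exactly the paper's: pass to the interaction picture, use $|\Omega_k|\sim M_1M_3$ to integrate by parts in time, split into boundary and spacetime pieces, and feed the substituted nonlinearities into Lemma~\ref{lem:decomp-prod}. Two concrete points need correction.

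\textbf{Boundary term.} The boundary contribution is a \emph{fixed-time} spatial integral
\[
M_1^{-1}M_3^{-1}\int_{\lambda\T} P_Nv\,P_Kv\,\prod_i P_{K_i}u\,dx\Big|_0^t,
\]
so the bilinear Strichartz estimate \eqref{eq:ShorttimeU2Estimate} (an $L^2_tL^2_x$ bound) cannot be applied. The paper simply uses H\"older in $x$: $L^2_x$ on the two highest-frequency factors and Bernstein $L^\infty_x\!\to\!L^2_x$ on the remaining ones, then takes $\sup_t$. This already produces $M_1^{-1}M_3^{-1}\prod_{i\ge3}M_i^{1/2}$ times $L^\infty_tL^2_x$ norms. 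Likewise, in the spacetime terms the remaining factors are placed in $L^\infty_tL^2_x$ or $L^\infty_{t,x}$ directly, not ``via bilinear Strichartz''; bilinear Strichartz is only used in the separate subcase $M_3\ll1$, where one skips the integration by parts entirely and estimates the original integral by two applications of \eqref{eq:ShorttimeU2Estimate} on $M_1^{-1}$-intervals.

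\textbf{Choice of $s'$ in Lemma~\ref{lem:decomp-prod}.} Your blanket assignment ``$s'=-1$ for \eqref{eq:s-1-int-by-pts-time}, $s'=0$ for \eqref{eq:s-int-by-pts-time-1}--\eqref{eq:s-int-by-pts-time-2}'' is too coarse. For \eqref{eq:s-1-int-by-pts-time} one takes $s'=-1$ when $\partial_s$ hits a $v$-factor (producing $P_\bullet\partial_x(vu^{k-1})$) but $s'=0$ when it hits a $u$-factor (producing $P_\bullet\partial_x(u^k)$). For \eqref{eq:s-int-by-pts-time-2} the asymmetry you flag is resolved by using $s'=-1$ for $P_K\partial_x(vu^{k-1})$, $s'=0$ for $P_N\partial_x(vu^{k-1})$ and for $P_{K_i}\partial_x(u^k)$, and $s'=1$ for $P_{K_i}\partial_{xx}(u_2^k)$ (which arises when $\partial_s$ hits the $w=\partial_xu_2$ slot); the latter is what generates the convention $c_{K_i}^{(w,\overline{s})}=K_ic_{K_i}^{(u_2,\overline{s},1)}$. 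Without this bookkeeping the envelope factors on the right of \eqref{eq:s-int-by-pts-time-2} do not come out correctly.
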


\begin{proof}
When $M_{3}\ll1$, the estimates follow from applying two short-time
bilinear Strichartz estimates to $M_{1},\dots,M_{4}$ after localizing
to intervals of length $M_{1}^{-1}$.

From now on, we assume $M_{3}\gtrsim1$. Here we focus on \eqref{eq:s-1-int-by-pts-time}.
The proofs of \eqref{eq:s-int-by-pts-time-1} and \eqref{eq:s-int-by-pts-time-2}
will be briefly sketched at the end of the proof. Fix $t\in[0,T]$.
The following estimates will be uniform for $t\in[0,T]$. Since $|\Omega_{k}|\sim M_{1}M_{3}$,
we integrate 
\[
\int_{0}^{t}\int_{\lambda\T}P_{N}vP_{K}vP_{K_{1}}u\dots P_{K_{k-1}}u\,dxds
\]
by parts in time to get the boundary term 
\[
M_{1}^{-1}M_{3}^{-1}\int_{\lambda\T}P_{N}vP_{K}vP_{K_{1}}u\dots P_{K_{k-1}}u\,dx\bigg|_{0}^{t}
\]
and the spacetime term 
\begin{align*}
M_{1}^{-1}M_{3}^{-1} & \bigg(\int_{0}^{t}\int_{\lambda\T}P_{N}\partial_{x}(vu^{k-1})P_{K}vP_{K_{1}}u\dots P_{K_{k-1}}u\,dxds\\
 & +\int_{0}^{t}\int_{\lambda\T}P_{N}vP_{K}\partial_{x}(vu^{k-1})P_{K_{1}}u\dots P_{K_{k-1}}u\,dxds\\
 & +\sum_{j=1}^{k-1}\int_{0}^{t}\int_{\lambda\T}P_{N}vP_{K}vP_{K_{j}}\partial_{x}(u^{k-1})\prod_{i\neq j}P_{K_{i}}u\,dxds\bigg).
\end{align*}
The boundary term is estimated by applying $L_{x}^{2}$ to the two
highest frequencies, applying pointwise bounds for the remaining factors:
\[
\Big(M_{1}^{-1}M_{3}^{-1}\prod_{i=3}^{k+1}M_{i}^{1/2}\Big)\|P_{N}v\|_{L_{t}^{\infty}([0,T],L_{\lambda}^{2})}\|P_{K}v\|_{L_{t}^{\infty}([0,T],L_{\lambda}^{2})}\prod_{i=1}^{k-1}\|P_{K_{i}}u\|_{L_{t}^{\infty}([0,T],L_{\lambda}^{2})}.
\]
The spacetime term is estimated by applying $L_{x}^{2}$ to the two
highest frequencies, pointwise bounds to the remaining factors, and
$L_{t}^{1}$ to $\partial_{x}(vu^{k-1})$ or $\partial_{x}(u^{k})$.
We find by Lemma \ref{lem:decomp-prod} ($s'=-1$ for $\partial_{x}(vu^{k-1})$
and $s'=0$ for $\partial_{x}(u^{k})$): 
\[
\|u\|_{F_{\lambda}^{\overline{s}}}^{k-1}\Big(M_{3}^{-1}\prod_{i=3}^{k+1}M_{i}^{1/2}\Big)c_{N}^{(v,\overline{s},-1)}c_{K}^{(v,\overline{s},-1)}\prod_{i=1}^{k-1}c_{K_{i}}^{(u,\overline{s})}.
\]
This completes the proof of \eqref{eq:s-1-int-by-pts-time}.

For \eqref{eq:s-int-by-pts-time-1}, we replace $\partial_{x}$ by
$K$ and integrate by parts in time. We apply Lemma \ref{lem:decomp-prod}
with $s'=0$ for both $\partial_{x}(vu^{k-1})$ and $\partial_{x}(u^{k})$.
For \eqref{eq:s-int-by-pts-time-2}, we apply Lemma \ref{lem:decomp-prod}
with $s'=-1$ for $P_{K}\partial_{x}(vu^{k-1})$, $s'=0$ for $P_{N}\partial_{x}(vu^{k-1})$
and $P_{K_{i}}\partial_{x}(u^{k})$, and $s'=1$ for $P_{K_{i}}\partial_{xx}(u_{2}^{k})$.
For the last one $P_{K_{i}}\partial_{xx}(u_{2}^{k})$, we use 
\[
\|P_{K_{i}}\partial_{xx}(u_{2}^{k})\|_{L_{t}^{1}([0,T],L_{\lambda}^{2})}\lesssim K_{i}\|P_{K_{i}}\partial_{x}(u_{2}^{k})\|_{L_{t}^{1}([0,T],L_{\lambda}^{2})}\lesssim K_{i}^{1/2,1}\|u_{2}\|_{F_{\lambda}^{\overline{s}}}^{k-1}K_{i}c_{K_{i}}^{(u_{2},\overline{s},1)}.
\]
This finishes the proof.
\end{proof}
From now on, we estimate \eqref{eq:s-1-expansion} and \eqref{eq:s-expansion}
using the above integration by parts in time lemma. Recall again that
$M_{1}\sim M_{2}\gtrsim N\gtrsim1$.

\textbf{Case A:} $N\in\{M_{1},M_{2}\}$.

\underline{Subcase I:} $K\in\{M_{1},M_{2}\}$. We start with the
$H_{\lambda}^{\overline{s}-1}$-estimate. In this case, we can integrate
by parts in \eqref{eq:s-1-expansion} to move the derivative $\partial_{x}$
to $K_{1},\dots,K_{k-1}$. We may replace $\partial_{x}$ by $K_{1}$
and let $K=N$ (due to $M_{1}\sim M_{2}$). Then \eqref{eq:s-1-expansion}
is of the form 
\[
\sum_{N\gtrsim1}\sum_{K_{1},\dots,K_{k-1}\lesssim N}N^{2s_{2}-2}K_{1}\sup_{t\in[0,T]}\bigg|\int_{0}^{t}\int_{\lambda\T}(P_{N}v)^{2}P_{K_{1}}u\dots P_{K_{k-1}}u\,dxds\bigg|.
\]
By \eqref{eq:s-1-int-by-pts-time}, each summand is estimated by 
\[
(N^{-1}+\|u\|_{F_{\lambda}^{\overline{s}}}^{k-1})(d_{N}^{(v,\overline{s},-1)})^{2}\prod_{i=1}^{k-1}K_{i}^{1/2}c_{K_{i}}^{(u,\overline{s})},
\]
where we recall the definition $d_{N}^{(v,\overline{s},-1)}$ of \eqref{eq:d_N-definition}.
Using \eqref{eq:l2-summation-c_N}, this sums up to \eqref{eq:LipschitzContinuityNegativeSobolevSpaces}.

For the $H_{\lambda}^{\overline{s}}$-estimate, we consider \eqref{eq:s-expansion}.
As before, we can move the derivative $\partial_{x}$ on $v$ to $K_{1},\dots,K_{k-1}$.
We may replace $\partial_{x}$ by $K_{1}$ and let $K=N$. Then \eqref{eq:s-expansion}
is of the form 
\begin{align*}
\sum_{N\gtrsim1}\sum_{K_{1},\dots,K_{k-1}\lesssim N}N^{2s_{2}} & K_{1}\sup_{t\in[0,T]}\bigg|\int_{0}^{t}\int_{\lambda\T}(P_{N}v)^{2}P_{K_{1}}u\dots P_{K_{k-1}}udxds\bigg|.
\end{align*}
By a variant of \eqref{eq:s-int-by-pts-time-1} (deleting $K$), each
summand is estimated by 
\[
\Big(\prod_{i=1}^{k-1}K_{i}^{1/2}c_{K_{i}}^{(u,\overline{s})}\Big)\bigg\{(N^{-1}+\|u\|_{F_{\lambda}^{\overline{s}}}^{k-1})(d_{N}^{(v,\overline{s})})^{2}+\|u\|_{F_{\lambda}^{\overline{s}}}^{k-2}\|v\|_{F_{\lambda}^{\overline{s}}}d_{N}^{(v,\overline{s})}d_{N}^{(u,\overline{s})}\bigg\}.
\]
Using \eqref{eq:l2-summation-c_N}, this sums up to \eqref{eq:HsContinuityBonaSmithRegularities}.

\underline{Subcase II:} $K\in\{M_{3},\dots,M_{k+1}\}$. We may assume
$K_{1}=N$. We start with the $H_{\lambda}^{\overline{s}-1}$-estimate.
Replacing $\partial_{x}P_{N}$ by $N$, we read \eqref{eq:s-1-expansion}
as 
\[
\sum_{N\gtrsim1}\sum_{K,K_{2},\dots,K_{k-1}\lesssim N}N^{2s_{2}-1}\sup_{t\in[0,T]}\bigg|\int_{0}^{t}\int_{\lambda\T}P_{N}vP_{K}vP_{N}u\dots P_{K_{k-1}}udxds\bigg|.
\]
By \eqref{eq:s-1-int-by-pts-time}, each summand is estimated by (use
$M_{3}^{0,-1}\lesssim K^{0,-1}$ and $K^{\frac{1}{2}-s_{1},-s_{2}+\frac{1}{2}}\lesssim K^{0+,0-}$)
\[
\Big(\prod_{i=2}^{k-1}K_{i}^{1/2}c_{K_{i}}^{(u,\overline{s})}\Big)(N^{-1}+\|u\|_{F_{\lambda}^{\overline{s}}}^{k-1})d_{N}^{(v,\overline{s},-1)}K^{0+,0-}d_{K}^{(v,\overline{s},-1)}d_{N}^{(u,\overline{s})}.
\]
This sums up to \eqref{eq:LipschitzContinuityNegativeSobolevSpaces}.

For the $H_{\lambda}^{s}$-estimate, we apply \eqref{eq:s-int-by-pts-time-1}
and \eqref{eq:s-int-by-pts-time-2} to each summand of the expression
\eqref{eq:s-expansion} by the sum of (use again $M_{3}^{0,-1}\lesssim K^{0,-1}$
and $K^{\frac{1}{2}-s_{1},-s_{2}+\frac{1}{2}}\lesssim K^{0+,0-}$)
\begin{alignat*}{1}
d_{N}^{(u,\overline{s})}\Big(\prod_{i=2}^{k-1}K_{i}^{1/2}c_{K_{i}}^{(u,\overline{s})}\Big)K^{0+,0-} & \bigg\{(N^{-1}+\|u\|_{F_{\lambda}^{\overline{s}}}^{k-1})d_{N}^{(v,\overline{s})}d_{K}^{(v,\overline{s})}\\
 & +\|u\|_{F_{\lambda}^{\overline{s}}}^{k-2}\|v\|_{F_{\lambda}^{\overline{s}}}(d_{N}^{(v,\overline{s})}d_{K}^{(u,\overline{s})}+d_{N}^{(u,\overline{s})}d_{K}^{(v,\overline{s})})\bigg\}
\end{alignat*}
and 
\[
d_{N}^{(w,\overline{s})}\Big(\prod_{i=2}^{k-1}K_{i}^{1/2}c_{K_{i}}^{(w,\overline{s})}\Big)K^{0+,0-}\bigg\{(N^{-1}+\|u\|_{F_{\lambda}^{\overline{s}}}^{k-1})d_{N}^{(v,\overline{s})}+\|u\|_{F_{\lambda}^{\overline{s}}}^{k-2}\|v\|_{F_{\lambda}^{\overline{s}}}d_{N}^{(u,\overline{s})}\bigg\} d_{K}^{(v,\overline{s},-1)}.
\]
Using $\|w\|_{F_{\lambda}^{\overline{s}}}^{k-1}\lesssim\|u_{2}\|_{F_{\lambda}^{\overline{s}+1}}\|u\|_{F_{\lambda}^{\overline{s}}}^{k-2}$,
these sum up to \eqref{eq:HsContinuityBonaSmithRegularities}.

\textbf{Case B:} $N\in\{M_{3},\dots,M_{k+1}\}$.

\underline{Subcase I:} $K\in\{M_{1},M_{2}\}$. Note that $K_{1}\sim K$.
We start with the $H_{\lambda}^{\overline{s}-1}$-estimate. Replacing
$\partial_{x}P_{N}$ by $N$, we read \eqref{eq:s-1-expansion} as
\[
\sum_{N\gtrsim1}\sum_{\substack{K\sim K_{1}\gtrsim N\\
K_{2},\dots,K_{k-1}\lesssim K_{1}
}
}N^{2s-1}\sup_{t\in[0,T]}\bigg|\int_{0}^{t}\int_{\lambda\T}P_{N}vP_{K}vP_{K_{1}}u\dots P_{K_{k-1}}udxds\bigg|.
\]
By \eqref{eq:s-1-int-by-pts-time}, each summand is estimated by (use
$M_{3}^{0,-1}\leq N^{-1}$)
\[
d_{K_{1}}^{(u,\overline{s})}\Big(\prod_{i=2}^{k-1}K_{i}^{1/2}c_{K_{i}}^{(u,\overline{s})}\Big)(K^{-1}+\|u\|_{F_{\lambda}^{\overline{s}}}^{k-1})N^{s_{2}-\frac{1}{2}}K^{-2s_{2}+1}d_{N}^{(v,\overline{s},-1)}d_{K}^{(v,\overline{s},-1)}.
\]
Using $s_{2}>\frac{1}{2}$, this easily sums up to \eqref{eq:LipschitzContinuityNegativeSobolevSpaces}.

For the $H_{\lambda}^{s}$-estimate, we apply \eqref{eq:s-int-by-pts-time-1}
and \eqref{eq:s-int-by-pts-time-2} to each summand of the expression
\eqref{eq:s-expansion} by the sum of (use again $M_{3}^{0,-1}\leq N^{-1}$)
\begin{align*}
d_{K_{1}}^{(u,\overline{s})}\Big(\prod_{i=2}^{k-1}K_{i}^{1/2}c_{K_{i}}^{(u,\overline{s})}\Big)N^{s_{2}-\frac{1}{2}}K^{-2s_{2}+1} & \bigg\{(K^{-1}+\|u\|_{F_{\lambda}^{\overline{s}}}^{k-1})d_{N}^{(v,\overline{s})}d_{K}^{(v,\overline{s})}\\
 & +\|u\|_{F_{\lambda}^{\overline{s}}}^{k-2}\|v\|_{F_{\lambda}^{\overline{s}}}(d_{N}^{(v,\overline{s})}d_{K}^{(u,\overline{s})}+d_{N}^{(u,\overline{s})}d_{K}^{(v,\overline{s})})\bigg\}
\end{align*}
and 
\begin{align*}
 & d_{K_{1}}^{(w,\overline{s})}\Big(\prod_{i=2}^{k-1}K_{i}^{1/2}c_{K_{i}}^{(w,\overline{s})}\Big)N^{s_{2}-\frac{1}{2}}K^{-2s_{2}+1}\\
 & \times\bigg\{(N^{-1}+\|u\|_{F_{\lambda}^{\overline{s}}}^{k-1})d_{N}^{(v,\overline{s})}+\|u\|_{F_{\lambda}^{\overline{s}}}^{k-2}\|v\|_{F_{\lambda}^{\overline{s}}}d_{N}^{(u,\overline{s})}\bigg\} d_{K}^{(v,\overline{s},-1)}.
\end{align*}
Using $s_{2}>\frac{1}{2}$ and $\|w\|_{F_{\lambda}^{\overline{s}}}^{k-1}\lesssim\|u_{2}\|_{F_{\lambda}^{\overline{s}+1}}\|u\|_{F_{\lambda}^{\overline{s}}}^{k-2}$,
these sum up to \eqref{eq:HsContinuityBonaSmithRegularities}.

\underline{Subcase II:} $K\in\{M_{3},\dots,M_{k+1}\}$. Note that
$K_{1}=M_{1}$ and $K_{2}=M_{2}$. We start with the $H_{\lambda}^{\overline{s}-1}$-estimate.
We replace $\partial_{x}P_{N}$ by $N$ to rewrite \eqref{eq:s-1-expansion}
as 
\[
\sum_{N\geq1}\sum_{\substack{K_{1}\sim K_{2}\gtrsim N\\
K,K_{3},\dots,K_{k-1}\lesssim K_{2}
}
}N^{2s_{2}-1}\sup_{t\in[0,T]}\bigg|\int_{0}^{t}\int_{\lambda\T}P_{N}vP_{K}vP_{K_{1}}u\dots P_{K_{k-1}}udxds\bigg|.
\]
By \eqref{eq:s-1-int-by-pts-time}, each summand is estimated by (use
$M_{3}^{0,-1}\lesssim K^{0,-1}$ and $K^{\frac{1}{2}-s_{1},-s_{2}+\frac{1}{2}}\lesssim K^{0+,0-}$)
\[
(K_{1}^{-1}+\|u\|_{F_{\lambda}^{\overline{s}}}^{k-1})N^{s_{2}+\frac{1}{2}}K^{0+,0-}K_{1}^{-2s_{2}}d_{K_{1}}^{(u,\overline{s})}d_{K_{2}}^{(u,\overline{s})}\Big(\prod_{i=3}^{k-1}K_{i}^{1/2}c_{K_{i}}^{(u,\overline{s})}\Big)d_{N}^{(v,\overline{s},-1)}d_{K}^{(v,\overline{s},-1)}.
\]
Using $s_{2}>\frac{1}{2}$, this sums up to \eqref{eq:LipschitzContinuityNegativeSobolevSpaces}.

For the $H_{\lambda}^{s}$-estimate, we apply \eqref{eq:s-int-by-pts-time-1}
and \eqref{eq:s-int-by-pts-time-2} to each summand of the expression
\eqref{eq:s-expansion} by the sum of (use again $M_{3}^{0,-1}\lesssim K^{0,-1}$
and $K^{\frac{1}{2}-s_{1},-s_{2}+\frac{1}{2}}\lesssim K^{0+,0-}$)
\begin{align*}
 & N^{s_{2}+\frac{1}{2}}K^{0+,0-}K_{1}^{-2s_{2}}d_{K_{1}}^{(u,\overline{s})}d_{K_{2}}^{(u,\overline{s})}\Big(\prod_{i=3}^{k-1}K_{i}^{1/2}c_{K_{i}}^{(u,\overline{s})}\Big)\\
 & \times\bigg\{(K_{1}^{-1}+\|u\|_{F_{\lambda}^{\overline{s}}}^{k-1})d_{N}^{(v,\overline{s})}d_{K}^{(v,\overline{s})}+\|u\|_{F_{\lambda}^{\overline{s}}}^{k-1}\|v\|_{F_{\lambda}^{\overline{s}}}(d_{N}^{(v,\overline{s})}d_{K}^{(u,\overline{s})}+d_{N}^{(u,\overline{s})}d_{K}^{(v,\overline{s})})\bigg\}
\end{align*}
and 
\begin{align*}
 & N^{s_{2}+\frac{1}{2}}K^{0+,0-}K_{1}^{-2s_{2}}d_{K_{1}}^{(w,\overline{s})}d_{K_{2}}^{(w,\overline{s})}\Big(\prod_{i=3}^{k-1}K_{i}^{1/2}c_{K_{i}}^{(w,\overline{s})}\Big)\\
 & \times\bigg\{(K_{1}^{-1}+\|u\|_{F_{\lambda}^{\overline{s}}}^{k-1})d_{N}^{(v,\overline{s})}+\|u\|_{F_{\lambda}^{\overline{s}}}^{k-2}\|v\|_{F_{\lambda}^{\overline{s}}}d_{N}^{(u,\overline{s})}\bigg\} d_{K}^{(v,\overline{s},-1)}.
\end{align*}
Using $s_{2}>\frac{1}{2}$ and $\|w\|_{F_{\lambda}^{\overline{s}}}^{k-1}\lesssim\|u_{2}\|_{F_{\lambda}^{\overline{s}+1}}\|u\|_{F_{\lambda}^{\overline{s}}}^{k-2}$,
these easily sum up to \eqref{eq:HsContinuityBonaSmithRegularities}.

Therefore, the proofs of \eqref{eq:LipschitzContinuityNegativeSobolevSpaces}
and \eqref{eq:HsContinuityBonaSmithRegularities} are completed in
case of $M_{1}\sim M_{2}\gtrsim M_{3}\gg M_{4}$.

\subsection{\label{subsec:diff-est-case2}Case $M_{1}\sim M_{2}\gg M_{3}\sim M_{4}$}

We directly estimate \eqref{eq:s-1-expansion} and \eqref{eq:s-expansion}
in this case. We do not integrate by parts in time. We can use two
bilinear Strichartz estimates in the form 
\begin{equation}
\begin{split} & \quad\sup_{t\in[0,T]}\bigg|\int_{0}^{t}\int_{\lambda\T}P_{M_{1}}u_{1}\dots P_{M_{m}}u_{m}dxds\bigg|\\
 & \lesssim M_{3}^{1/2,0}M_{4}^{1/2,0}\Big(\prod_{i=5}^{m}M_{i}^{1/2}\Big)\prod_{i=1}^{m}\|P_{M_{i}}u_{i}\|_{F_{\lambda}^{0}}.
\end{split}
\label{eq:diff-est-two-bilinear}
\end{equation}

\textbf{Case A:} $N\in\{M_{1},M_{2}\}$.

\underline{Subcase I:} $K\in\{M_{1},M_{2}\}$. We start with the
$H_{\lambda}^{\overline{s}-1}$-estimate. In this case, we can perform
integration by parts in space to the expression \eqref{eq:s-1-expansion}
to move the derivative $\partial_{x}$ to $K_{1},\dots,K_{k-1}$.
Replacing $\partial_{x}$ by $K_{1}$, we need to estimate 
\[
\sum_{N\geq1}\sum_{\substack{K_{1}\sim K_{2}\ll N\\
K_{3},\dots,K_{k-1}\lesssim K_{2}
}
}N^{2s_{2}-2}K_{1}\sup_{t\in[0,T]}\bigg|\int_{0}^{t}\int_{\lambda\T}(P_{N}v)^{2}P_{K_{1}}u\dots P_{K_{k-1}}u\,dxds\bigg|.
\]
By \eqref{eq:diff-est-two-bilinear} and $K_{1}\sim K_{2}$, each
summand is estimated by 
\[
\|P_{N}v\|_{F_{\lambda}^{\overline{s}-1}}^{2}\prod_{i=1}^{k-1}\|P_{K_{i}}u\|_{F_{\lambda}^{1/2}}.
\]
This sums up to \eqref{eq:LipschitzContinuityNegativeSobolevSpaces}.

For the $H_{\lambda}^{\overline{s}}$-estimate, we also integrate
by parts in space to the expression \eqref{eq:s-expansion} to assume
that $\partial_{x}$ is applied to $K_{1},\dots,K_{k-1}$. Replacing
$\partial_{x}$ by $K_{1}$, it suffices to estimate 
\[
\sum_{N\geq1}\sum_{\substack{K_{1}\sim K_{2}\ll N\\
K_{3},\dots,K_{k-1}\lesssim K_{2}
}
}N^{2s_{2}}K_{1}\sup_{t\in[0,T]}\bigg|\int_{0}^{t}\int_{\lambda\T}(P_{N}v)^{2}P_{K_{1}}u\dots P_{K_{k-1}}u\,dxds\bigg|.
\]
By \eqref{eq:diff-est-two-bilinear} and $K_{1}\sim K_{2}$, each
summand is estimated by 
\[
\|P_{N}v\|_{F_{\lambda}^{\overline{s}}}^{2}\prod_{i=1}^{k-1}\|P_{K_{i}}u\|_{F_{\lambda}^{1/2}}.
\]
This sums up to \eqref{eq:HsContinuityBonaSmithRegularities}.

\underline{Subcase II:} $K\in\{M_{3},\dots,M_{k+1}\}$. We may assume
$K_{1}=N$. For the $H_{\lambda}^{\overline{s}-1}$-estimate, it suffices
to estimate 
\[
\sum_{N\geq1}\sum_{\substack{M_{3}\sim M_{4}\ll N\\
M_{5},\dots,M_{k+1}\lesssim M_{3}
}
}N^{2s_{2}-1}\sup_{t\in[0,T]}\bigg|\int_{0}^{t}\int_{\lambda\T}P_{N}vP_{K}vP_{N}u\dots P_{K_{k-1}}u\,dxds\bigg|.
\]
Applying \eqref{eq:diff-est-two-bilinear} and $M_{3}^{1/2,0}M_{4}^{1/2,0}\lesssim K^{1/2,-1/2}K_{2}^{1/2}$
(due to $M_{3}\sim M_{4}\sim K_{2}\gtrsim K$), each summand is estimated
by 
\begin{align*}
 & \quad N^{2s_{2}-1}K^{1/2,-1/2}\|P_{N}v\|_{F_{\lambda}^{0}}\|P_{K}v\|_{F_{\lambda}^{0}}\|P_{N}u\|_{F_{\lambda}^{0}}\prod_{i=2}^{k-1}\|P_{K_{i}}u\|_{F_{\lambda}^{1/2}}\\
 & \lesssim\|P_{N}v\|_{F_{\lambda}^{\overline{s}-1}}\|P_{N}u\|_{F_{\lambda}^{\overline{s}}}K^{0+,0-}\|P_{K}v\|_{F_{\lambda}^{\overline{s}-1}}\prod_{i=2}^{k-1}\|P_{K_{i}}u\|_{F_{\lambda}^{1/2}}.
\end{align*}
This sums up to \eqref{eq:LipschitzContinuityNegativeSobolevSpaces}.

For the $H_{\lambda}^{\overline{s}}$-estimate, we apply \eqref{eq:diff-est-two-bilinear}
to \eqref{eq:s-expansion} to estimate each summand by 
\begin{align*}
\|P_{N}v\|_{F_{\lambda}^{\overline{s}}} & \Big(K^{0+,0-}\|P_{K}v\|_{F_{\lambda}^{\overline{s}}}\|P_{N}u\|_{F_{\lambda}^{\overline{s}}}\prod_{i=2}^{k-1}\|P_{K_{i}}u\|_{F_{\lambda}^{1/2}}\\
 & +K^{0+,0-}\|P_{K}v\|_{F_{\lambda}^{\overline{s}-1}}\|P_{N}w\|_{F_{\lambda}^{\overline{s}}}\prod_{i=2}^{k-1}\|P_{K_{i}}w\|_{F_{\lambda}^{1/2}}\Big).
\end{align*}
This sums up to \eqref{eq:HsContinuityBonaSmithRegularities}.

\textbf{Case B:} $N\in\{M_{3},\dots,M_{k+1}\}$.

\underline{Subcase I:} $K\in\{M_{1},M_{2}\}$. Note that $K_{1}\sim K$
and $M_{3}\sim M_{4}\gtrsim N\gtrsim1$. For the $H_{\lambda}^{\overline{s}-1}$-estimate,
it suffices to estimate 
\[
\sum_{\substack{M_{1}\sim M_{2}\gg M_{3}\sim M_{4}\gtrsim1\\
M_{5},\dots,M_{k+1}\lesssim M_{4}
}
}N^{2s_{2}-1}\sup_{t\in[0,T]}\bigg|\int_{0}^{t}\int_{\lambda\T}P_{N}vP_{K}vP_{K_{1}}u\dots P_{K_{k-1}}u\,dxds\bigg|.
\]
Using \eqref{eq:diff-est-two-bilinear}, $K_{1}\sim K$, and $N^{1/2}\lesssim K_{2}^{1/2}$
(due to $K_{2}\in\{M_{3},M_{4}\}$), each summand is estimated by
\begin{align*}
 & \quad N^{2s_{2}-1}\Big(\prod_{i=5}^{k+1}M_{i}^{1/2}\Big)\|P_{N}v\|_{F_{\lambda}^{0}}\|P_{K}v\|_{F_{\lambda}^{0}}\prod_{i=1}^{k-1}\|P_{K_{i}}u\|_{F_{\lambda}^{0}}\\
 & \lesssim N^{s_{2}-\frac{1}{2}}K^{-2s_{2}+1}\|P_{N}v\|_{F_{\lambda}^{\overline{s}-1}}\|P_{K}v\|_{F_{\lambda}^{\overline{s}-1}}\|P_{K_{1}}u\|_{F_{\lambda}^{\overline{s}}}\prod_{i=2}^{k-1}\|P_{K_{i}}u\|_{F_{\lambda}^{1/2}}.
\end{align*}
Using $s_{2}>\frac{1}{2}$ to guarantee $-s_{2}+\frac{1}{2}<0$, this
sums up to \eqref{eq:LipschitzContinuityNegativeSobolevSpaces}.

For the $H_{\lambda}^{\overline{s}}$-estimate, we apply \eqref{eq:diff-est-two-bilinear}
to \eqref{eq:s-expansion} to estimate each summand by 
\begin{align*}
N^{s_{2}-\frac{1}{2}}K^{-2s_{2}+1}\|P_{N}v\|_{F_{\lambda}^{\overline{s}}} & \Big(\|P_{K}v\|_{F_{\lambda}^{\overline{s}}}\|P_{K_{1}}u\|_{F_{\lambda}^{\overline{s}}}\prod_{i=2}^{k-1}\|P_{K_{i}}u\|_{F_{\lambda}^{1/2}}\\
 & +\|P_{K}v\|_{F_{\lambda}^{\overline{s}-1}}\|P_{K_{1}}w\|_{F_{\lambda}^{\overline{s}}}\prod_{i=2}^{k-1}\|P_{K_{i}}w\|_{F_{\lambda}^{1/2}}\Big).
\end{align*}
Using $s_{2}>\frac{1}{2}$ to guarantee $-s_{2}+\frac{1}{2}<0$, this
sums up to \eqref{eq:HsContinuityBonaSmithRegularities}.

\underline{Subcase II:} $K\in\{M_{3},\dots,M_{k+1}\}$. We still
have $M_{3}\sim M_{4}\gtrsim N\gtrsim1$. For the $H_{\lambda}^{\overline{s}-1}$-estimate,
it suffices to estimate 
\[
\sum_{\substack{M_{1}\sim M_{2}\gg M_{3}\sim M_{4}\gtrsim1\\
M_{5},\dots,M_{k+1}\lesssim M_{4}
}
}N^{2s_{2}-1}\sup_{t\in[0,T]}\bigg|\int_{0}^{t}\int_{\lambda\T}P_{N}vP_{K}vP_{K_{1}}u\dots P_{K_{k-1}}u\,dxds\bigg|.
\]
Using \eqref{eq:diff-est-two-bilinear} and $\{K_{1},K_{2}\}=\{M_{1},M_{2}\}$,
each summand is estimated by 
\begin{align*}
 & \quad N^{2s_{2}-1}M_{3}^{1/2,0}M_{4}^{1/2,0}\Big(\prod_{i=5}^{k+1}M_{i}^{1/2}\Big)\|P_{N}v\|_{F_{\lambda}^{0}}\|P_{K}v\|_{F_{\lambda}^{0}}\prod_{i=1}^{k-1}\|P_{K_{i}}u\|_{F_{\lambda}^{0}}\\
 & \lesssim N^{s_{2}}K^{\frac{1}{2}-s_{1},-s_{2}+1}K_{1}^{-2s_{2}}\|P_{N}v\|_{F_{\lambda}^{\overline{s}-1}}\|P_{K}v\|_{F_{\lambda}^{\overline{s}-1}}\|P_{K_{1}}u\|_{F_{\lambda}^{\overline{s}}}\|P_{K_{2}}u\|_{F_{\lambda}^{\overline{s}}}\prod_{i=3}^{k-1}\|P_{K_{i}}u\|_{F_{\lambda}^{1/2}}.
\end{align*}
Using $s_{2}>\frac{1}{2}$ to guarantee $-2s_{2}+1<0$, this sums
up to \eqref{eq:LipschitzContinuityNegativeSobolevSpaces}.

For the $H_{\lambda}^{\overline{s}}$-estimate, we apply \eqref{eq:diff-est-two-bilinear}
to \eqref{eq:s-expansion} to estimate each summand by 
\begin{align*}
N^{s_{2}}K^{\frac{1}{2}-s_{1},-s_{2}+1}K_{1}^{-2s_{2}}\|P_{N}v\|_{F_{\lambda}^{\overline{s}}} & \Big(\|P_{K}v\|_{F_{\lambda}^{\overline{s}}}\|P_{K_{1}}u\|_{F_{\lambda}^{\overline{s}}}\|P_{K_{2}}u\|_{F_{\lambda}^{\overline{s}}}\prod_{i=3}^{k-1}\|P_{K_{i}}u\|_{F_{\lambda}^{1/2}}\\
 & +\|P_{K}v\|_{F_{\lambda}^{\overline{s}-1}}\|P_{K_{1}}w\|_{F_{\lambda}^{\overline{s}}}\|P_{K_{2}}w\|_{F_{\lambda}^{\overline{s}}}\prod_{i=3}^{k-1}\|P_{K_{i}}w\|_{F_{\lambda}^{1/2}}\Big).
\end{align*}
Using $s_{2}>\frac{1}{2}$ to guarantee $-2s_{2}+1<0$, this sums
up to \eqref{eq:HsContinuityBonaSmithRegularities}.

Therefore, the proofs of \eqref{eq:LipschitzContinuityNegativeSobolevSpaces}
and \eqref{eq:HsContinuityBonaSmithRegularities} are completed in
case of $M_{1}\sim M_{2}\gg M_{3}\sim M_{4}$.

\subsection{\label{subsec:diff-est-case3}Case $M_{1}\sim M_{2}\sim M_{3}\sim M_{4}$}

We estimate \eqref{eq:s-1-expansion} and \eqref{eq:s-expansion}
via linear Strichartz estimates: 
\begin{equation}
\sup_{t\in[0,T]}\Big|\int_{0}^{t}\int_{\lambda\T}P_{M_{1}}u_{1}\dots P_{M_{m}}u_{m}dxds\Big|\lesssim M_{1}^{1/2}\Big(\prod_{i=5}^{m}M_{i}^{1/2}\Big)\prod_{i=1}^{m}\|P_{M_{i}}u_{i}\|_{F_{\lambda}^{0}}.\label{eq:diff-est-lin-strichartz}
\end{equation}
Here, our estimates only allow for $s_{2}\geq3/4$ due to the loss
of $M_{1}^{1/2}$ compared to the previous case.

\textbf{Case A:} $N\in\{M_{1},\dots,M_{4}\}$.

\underline{Subcase I:} $K\in\{M_{1},\dots,M_{4}\}$. Note that $N\sim K\sim K_{1}\sim K_{2}$
so we may assume $K=K_{1}=K_{2}=N$. For the $H_{\lambda}^{\overline{s}-1}$-estimate,
we use \eqref{eq:diff-est-lin-strichartz} to estimate each summand
of \eqref{eq:s-1-expansion} by 
\[
N^{2s_{2}-\frac{1}{2}}\|P_{N}v\|_{F_{\lambda}^{0}}^{2}\|P_{N}u\|_{F_{\lambda}^{0}}^{2}\prod_{i=3}^{k-1}\|P_{K_{i}}u\|_{F_{\lambda}^{1/2}}.
\]
We use $s_{2}\geq\frac{3}{4}$ to estimate the above by 
\[
\|P_{N}v\|_{F_{\lambda}^{\overline{s}-1}}^{2}\|P_{N}u\|_{F_{\lambda}^{\overline{s}}}^{2}\prod_{i=3}^{k-1}\|P_{K_{i}}u\|_{F_{\lambda}^{1/2}}.
\]
This sums up to \eqref{eq:LipschitzContinuityNegativeSobolevSpaces}.

For the $H_{\lambda}^{\overline{s}}$-estimate, we replace $\partial_{x}$
of \eqref{eq:s-expansion} (even for $w$ when $w=\partial_{x}u_{2}$)
by $N$ and use \eqref{eq:diff-est-lin-strichartz} to estimate each
summand by 
\[
N^{2s_{2}+\frac{3}{2}}\|P_{N}v\|_{F_{\lambda}^{0}}^{2}\|P_{N}u\|_{F_{\lambda}^{0}}^{2}\prod_{i=3}^{k-1}\|P_{K_{i}}u\|_{F_{\lambda}^{1/2}}.
\]
We use $s_{2}\geq\frac{3}{4}$ to estimate the above by 
\[
\|P_{N}v\|_{F_{\lambda}^{\overline{s}}}^{2}\|P_{N}u\|_{F_{\lambda}^{\overline{s}}}^{2}\prod_{i=3}^{k-1}\|P_{K_{i}}u\|_{F_{\lambda}^{1/2}}.
\]
This sums up to \eqref{eq:HsContinuityBonaSmithRegularities}.

\underline{Subcase II:} $K\in\{M_{5},\dots,M_{k+1}\}$. Note that
$N\sim K_{1}\sim K_{2}\sim K_{3}$, so we may assume $K_{1}=K_{2}=K_{3}=N$.
For the $H_{\lambda}^{\overline{s}-1}$-estimate, we use \eqref{eq:diff-est-lin-strichartz}
to estimate each summand of \eqref{eq:s-1-expansion} by 
\[
N^{-2s_{2}+\frac{1}{2}}K^{\frac{1}{2}-s_{1},-s_{2}+\frac{3}{2}}\|P_{N}v\|_{F_{\lambda}^{\overline{s}-1}}\|P_{K}v\|_{F_{\lambda}^{\overline{s}-1}}\|P_{N}u\|_{F_{\lambda}^{\overline{s}}}^{3}\prod_{i=4}^{k-1}\|P_{K_{i}}u\|_{F_{\lambda}^{1/2}}.
\]
This sums up to \eqref{eq:LipschitzContinuityNegativeSobolevSpaces}
provided that $s_{2}\geq\frac{2}{3}$.

For the $H_{\lambda}^{\overline{s}}$-estimate, we find similarly
\begin{align*}
N^{-2s_{2}+\frac{1}{2}}K^{\frac{1}{2}-s_{1},-s_{2}+\frac{3}{2}}\|P_{N}v\|_{F_{\lambda}^{\overline{s}}} & \Big(\|P_{K}v\|_{F_{\lambda}^{\overline{s}}}\|P_{N}u\|_{F_{\lambda}^{\overline{s}}}^{3}\prod_{i=4}^{k-1}\|P_{K_{i}}u\|_{F_{\lambda}^{1/2}}\\
 & +\|P_{K}v\|_{F_{\lambda}^{\overline{s}-1}}\|P_{N}w\|_{F_{\lambda}^{\overline{s}}}^{3}\prod_{i=4}^{k-1}\|P_{K_{i}}w\|_{F_{\lambda}^{1/2}}\Big).
\end{align*}
This sums up to \eqref{eq:HsContinuityBonaSmithRegularities} provided
that $s_{2}\geq\frac{2}{3}$.

\textbf{Case B:} $N\in\{M_{5},\dots,M_{k+1}\}$.

\underline{Subcase I:} $K\in\{M_{1},\dots,M_{4}\}$. Note that $K\sim K_{1}\sim K_{2}\sim K_{3}$,
so we may assume $K=K_{1}=K_{2}=K_{3}$. For the $H_{\lambda}^{\overline{s}-1}$-estimate,
we use \eqref{eq:diff-est-lin-strichartz} to estimate each summand
of \eqref{eq:s-1-expansion} by 
\begin{align*}
 & \quad N^{2s_{2}-\frac{1}{2}}K^{1/2}\|P_{N}v\|_{F_{\lambda}^{0}}\|P_{K}v\|_{F_{\lambda}^{0}}\|P_{K}u\|_{F_{\lambda}^{0}}^{3}\prod_{i=4}^{k-1}\|P_{K_{i}}u\|_{F_{\lambda}^{1/2}}\\
 & \lesssim N^{s_{2}+\frac{1}{2}}K^{-4s_{2}+\frac{3}{2}}\|P_{N}v\|_{F_{\lambda}^{\overline{s}-1}}\|P_{K}v\|_{F_{\lambda}^{\overline{s}-1}}\|P_{K}u\|_{F_{\lambda}^{\overline{s}}}^{3}\prod_{i=4}^{k-1}\|P_{K_{i}}u\|_{F_{\lambda}^{1/2}}.
\end{align*}
This sums up to \eqref{eq:LipschitzContinuityNegativeSobolevSpaces}
provided that $s_{2}\geq\frac{2}{3}$.

For the $H_{\lambda}^{\overline{s}}$-estimate, we find similarly
\begin{align*}
N^{s_{2}+\frac{1}{2}}K^{-4s_{2}+\frac{3}{2}}\|P_{N}v\|_{F_{\lambda}^{\overline{s}}} & \Big(\|P_{K}v\|_{F_{\lambda}^{\overline{s}}}\|P_{N}u\|_{F_{\lambda}^{\overline{s}}}^{3}\prod_{i=4}^{k-1}\|P_{K_{i}}u\|_{F_{\lambda}^{1/2}}\\
 & +\|P_{K}v\|_{F_{\lambda}^{\overline{s}-1}}\|P_{N}w\|_{F_{\lambda}^{\overline{s}}}^{3}\prod_{i=4}^{k-1}\|P_{K_{i}}w\|_{F_{\lambda}^{1/2}}\Big).
\end{align*}
This sums up to \eqref{eq:HsContinuityBonaSmithRegularities} provided
that $s_{2}\geq\frac{2}{3}$.

\underline{Subcase II:} $K\in\{M_{5},\dots,M_{k+1}\}$. Note that
$K_{1}\sim K_{2}\sim K_{3}\sim K_{4}$, so we may assume $K_{1}=K_{2}=K_{3}=K_{4}$
are the largest four frequencies. Using \eqref{eq:diff-est-lin-strichartz},
each summand of \eqref{eq:s-1-expansion} is estimated by 
\begin{align*}
 & \quad N^{2s_{2}-\frac{1}{2}}K^{1/2}K_{1}^{1/2}\|P_{N}v\|_{F_{\lambda}^{0}}\|P_{K}v\|_{F_{\lambda}^{0}}\|P_{K_{1}}u\|_{F_{\lambda}^{0}}^{4}\prod_{i=5}^{k-1}\|P_{K_{i}}u\|_{F_{\lambda}^{1/2}}\\
 & \lesssim N^{s_{2}+\frac{1}{2}}K^{\frac{1}{2}-s_{1},-s_{2}+\frac{3}{2}}K_{1}^{-4s_{2}+\frac{1}{2}}\|P_{N}v\|_{F_{\lambda}^{\overline{s}-1}}\|P_{K}v\|_{F_{\lambda}^{\overline{s}-1}}\|P_{K_{1}}u\|_{F_{\lambda}^{\overline{s}}}^{4}\prod_{i=5}^{k-1}\|P_{K_{i}}u\|_{F_{\lambda}^{1/2}}.
\end{align*}
This sums up to \eqref{eq:LipschitzContinuityNegativeSobolevSpaces}
provided that $s\geq\frac{5}{8}$.

For the $H_{\lambda}^{\overline{s}}$-estimate, we find similarly
for each summand of \eqref{eq:s-expansion} 
\begin{align*}
N^{s_{2}+\frac{1}{2}}K^{\frac{1}{2}-s_{1},-s_{2}+\frac{3}{2}}K_{1}^{-4s_{2}+\frac{1}{2}}\|P_{N}v\|_{F_{\lambda}^{\overline{s}}} & \Big(\|P_{K}v\|_{F_{\lambda}^{\overline{s}}}\|P_{K_{1}}u\|_{F_{\lambda}^{\overline{s}}}^{4}\prod_{i=5}^{k-1}\|P_{K_{i}}u\|_{F_{\lambda}^{1/2}}\\
 & +\|P_{K}v\|_{F_{\lambda}^{\overline{s}-1}}\|P_{K_{1}}w\|_{F_{\lambda}^{\overline{s}}}^{4}\prod_{i=5}^{k-1}\|P_{K_{i}}w\|_{F_{\lambda}^{1/2}}\Big).
\end{align*}
This sums up to \eqref{eq:HsContinuityBonaSmithRegularities} provided
that $s_{2}\geq\frac{5}{8}$.

Therefore, the proofs of \eqref{eq:LipschitzContinuityNegativeSobolevSpaces}
and \eqref{eq:HsContinuityBonaSmithRegularities} are completed in
case of $M_{1}\sim M_{2}\sim M_{3}\sim M_{4}$.

The proof of Proposition \ref{prop:EnergyEstimatesDifferences} is
finished. \hfill{}$\square$

\section*{Acknowledgements}

The first author is partly supported by NRF-2016K2A9A2A13003815 (Korea)
through the IRTG 2235 and NRF-2018R1D1A1A0908335 (Korea). The second
author is supported by the German Research Foundation (DFG) through
the CRC 1173, Project-ID 258734477. Much of this work was done when
the first author was visiting Bielefeld University. He would like
to thank the Center for Interdisciplinary Research (ZiF) at Bielefeld
University for its kind hospitality.

\end{document}